\def\C{\mathbb{C}}
\def\F{\mathscr{F}}
\def\R{\mathbb{R}}
\DeclareMathOperator{\esssup}{ess\,sup}
\DeclareMathOperator*{\Res}{Res}
\renewcommand{\l}[1]{\widehat{#1}}
\def\d{\,\mathrm{d}}
\def\dv{\d v}
\def \ddk{\frac{\mathrm{d}}{\mathrm{d}k}}
\def\p{\partial}
\def\:{\colon}
\newtheorem{thm}{Theorem}[section]
\newtheorem{cor}[thm]{Corollary}
\newtheorem{lem}[thm]{Lemma}
\newtheorem{prp}[thm]{Proposition}
\theoremstyle{definition}
\newtheorem{dfn}[thm]{Definition}
\theoremstyle{remark}
\newtheorem{rem}[thm]{Remark}
\theoremstyle{example}
\numberwithin{equation}{section}
\def\thetitle{On the asymptotic behavior of the NNLIF neuron model for
  general connectivity strength} \def\theauthor{María J.~Cáceres \and
  José~A.~Cañizo \and Alejandro Ramos-Lora}
\title{\thetitle}
\author{\theauthor}
\date{January 2024}
\begin{document}
	
\maketitle

\begin{abstract}
  We prove new results on the asymptotic behavior of the nonlinear
  integrate-and-fire neuron model. Among them, we give a criterion for
  the linearized stability or instability of equilibria, without
  restriction on the connectivity parameter, which provides a proof of
  stability or instability in some cases. In all cases, this criterion
  can be checked numerically, allowing us to give a full picture of
  the stable and unstable equilibria depending on the connectivity
  parameter $b$ and transmission delay $d$. We also give further
  spectral results on the associated linear equation, and use them to
  give improved results on the nonlinear stability of equilibria for
  weak connectivity, and on the link between linearized and nonlinear
  stability.
\end{abstract}

\tableofcontents

\section{Introduction and main results}
\label{sec:intro}

Simple mathematical models are often used to describe the activity of
populations of neurons, although the properties of their solutions are
frequently not rigorously known.  This is the case for nonlinear noisy
leaky integrate and fire (NNLIF) neuronal models, for which the
asymptotic behaviour of their solutions is not yet well understood,
even in their simplest version.  This article seeks to shed light in
that direction, especially on the stability or instability of their
equilibria.

NNLIF models describe the activity of a large number of neurons
(neuron networks), at the level of the membrane potential, which is
the potential difference across the neuronal membrane
\cite{lapicque,sirovich,omurtag,brunel2000dynamics,
  BrGe,brunel1999fast}.  Two different scales are usually considered
in models: microscopic, using stochastic differential equations (SDE)
\cite{delarue2015global,delarue2015particle,liu2022rigorous}, and
mesoscopic/macroscopic via mean-field Fokker-Planck-type equations
\cite{caceres2011analysis,carrillo2013classical,caceres2014beyond,
  caceres2017blow,
  caceres2018analysis,caceres2019global,hu2021structure,
  roux2021towards, ikeda2022theoretical}.  This paper is devoted to
one of the simplest PDE models in this family
\cite{brunel2000dynamics,delarue2015particle,delarue2015global,liu2022rigorous},
which arises as the mean-field limit of a large set of $\mathcal{N}$
identical neurons which are connected to each other in a network
\cite{brunel2000dynamics, liu2022rigorous}, as $\mathcal{N}\to\infty$.
It is given by the following nonlinear Fokker-Planck equation:
\begin{subequations}
  \label{eq:NNLIF}
  \begin{equation}
    \label{eq:NNLIF-PDE}
    \p_t p(v, t)
        +\p_v\left[\left(-v+bN_p(t-d)\right)p(v,t)\right]
    -\p^2_vp(v,t)=\delta_{V_R} N(t),
  \end{equation}
  where we always denote $\delta_{V_R}(v) \equiv \delta(v-V_R)$, the
  delta function at the point $v = V_R$. The unknown $p(v,t) \geq 0$
  is the probability density of finding neurons at voltage (or
  \emph{potential}) $v\in\left(-\infty,V_F\right]$ and time $t\geq 0$,
  so we are mainly interested in nonnegative solutions for this
  equation.  Each neuron spikes when its membrane voltage reaches the
  firing threshold value $V_F$, discharges immediately afterwards, and
  its membrane potential is restored to the reset value $V_R <
  V_F$. This resetting effect is described by the right hand side of
  \eqref{eq:NNLIF}, the boundary condition
  \begin{equation}
    \label{eq:NNLIF-bc}
    p(V_F,t) = 0, \qquad \text{for $t > 0$},
  \end{equation}
  and the definition of the firing rate $N(t)$ as the flux at $V_F$,
  \begin{equation}
    \label{eq:NNLIF-N}
    N_p(t):=-\p_v p(V_F,t)\ge 0.
  \end{equation}
  Moreover, there is a delay $d \geq 0$ in the synaptic transmission,
  which is included in the drift coefficient $-v+bN_p(t-d)$. Its effect
  is sometimes also included in a factor multiplying the diffusion
  term \cite{brunel2000dynamics,caceres2011analysis}, but we assume
  the diffusion coefficient to be $1$ for simplicity. Our
  techniques can easily be applied to a constant diffusion
  coefficient, but in this paper we always consider a coefficient $1$,
  as in \eqref{eq:NNLIF-PDE}. In order to have unique solutions, the
  system must include an initial condition of the form
  \begin{equation}
    \label{eq:NNLIF-u0}
    p(v,t) = p_0(v,t)
    \qquad \text{for $v \in (-\infty, V_F]$ and $t \in [-d,0]$,}
  \end{equation}
  where $p_0$ is a given function. We notice that in order to solve
  the PDE, the only initial conditions strictly needed are
  $p_0(\cdot,0)$ and $N_p(t)$ for $-d \leq t \leq 0$, but we write a
  full initial condition $p_0$ on $[-d,0]$ for ease of notation. The
  system \eqref{eq:NNLIF-PDE} is nonlinear since the firing rate
  $N_p(t)$ is computed by \eqref{eq:NNLIF-N}.
\end{subequations}
This system is the simplest of the family because all neurons are
assumed to be continuously active over time, and excitatory and
inhibitory neurons are not considered distinct populations. For more
realistic model with refractory states and separated populations of
excitatory and inhibitory neurons we refer to
\cite{brunel2000dynamics,caceres2018analysis}.  The key parameter of
system \eqref{eq:NNLIF} is the {\em connectivity parameter} $b$, which
gives us information on whether the network is on average excitatory
or inhibitory: if $b>0$ the network is average-excitatory; if $b<0$
the network is average-inhibitory; if $b=0$ the system is linear and
neurons are not connected to each other.

Given any nonnegative, integrable initial condition
$p(v,0)=p_0(v)\geq0$, the system \eqref{eq:NNLIF} satisfies the
conservation law
$\int_{-\infty}^{V_F}p(v,t) \dv = \int_{-\infty}^{V_F}p_0(v) \dv$ at
any time $t$ for which the solution is defined
\cite{carrillo2013classical,caceres2019global,roux2021towards} (we
always assume $\lim_{v \to -\infty}p(v, t)=0$, or other reasonable
conditions which ensure appropriate decay of solutions as
$v \to -\infty$).  For simplicity, and since this does not entail any
qualitative changes to the system, we will consider
$\int_{-\infty}^{V_F}p_0(v) \d v=1$ throughout this paper.

\medskip

The number of stationary solutions of equation \eqref{eq:NNLIF} and
their profiles are well understood \cite{caceres2011analysis}. The
\emph{probability steady states} (or simply steady states) of the
system \eqref{eq:NNLIF} are integrable, nonnegative solutions to the
following equation:
\begin{equation}
  \left\{
    \begin{array}{l}
      \p_v\left[\left(-v+bN_\infty\right)p_\infty(v)\right]-\p^2_v
      p_\infty(v)=\delta_{V_R} N_\infty,
      \\
      N_\infty=-\p_v p_\infty(V_F),
      \qquad 
      p_\infty(V_F) = 0,
      \\
      \text{and} \quad
      \int_{-\infty}^{V_F}p_\infty(v) \d v=1.
    \end{array}
  \right.
  \label{eq: large-delta-stationary}
\end{equation}
Interchangeably, they are also called \emph{equilibria} in the
literature, and we will use either of them as synonyms.
In this paper we always assume steady states to have
integral $1$.
These profiles are continuous in $(-\infty, V_F]$,
differentiable in $(-\infty, V_R) \cup (V_R, V_F]$, and they are given
by
\begin{equation}
  \label{eq:stationary-state}
  p_\infty(v) = N_\infty e^{-\frac{\left(v-bN_\infty\right)^2}{2}}
  \int_{\max(v, V_R)}^{V_F}e^{\frac{\left(w-bN_\infty\right)^2}{2}} \d w,
\end{equation}
where the \emph{stationary firing rate} $N_\infty$ is a solution of
the implicit equation $N_\infty I(N_\infty)=1$, with $ I(N) $ defined
as
\begin{equation}
  \label{eq:I}
  I(N):= \int_{-\infty}^{V_F}
  e^{-\frac{\left(v-bN\right)^2}{2}}
  \int_{\max(v, V_R)}^{V_F}e^{\frac{\left(w-bN\right)^2}{2}} \d w \d v.
\end{equation}
This implicit equation is obtained as a consequence of the
conservation of mass, i.e., the condition
$\int_{-\infty}^{V_F} p_\infty(v) \d v = 1$.  Hence the number of
equilibria of \eqref{eq:NNLIF} is the same as the number of solutions
to $N_\infty I(N_\infty)=1$, which depends on the connectivity parameter
$b$: for $b\le 0$ (inhibitory case) there is only one steady state;
for $b>0$ (excitatory case) there is only one if $b$ is small; there
are no steady states if $b>0$ is large; and there are at least two for
intermediate values of $b$. The previous results can be proved
rigorously \cite{caceres2011analysis}, and numerically it is clear
that there is a maximum of two stationary states for any $b$. Figure
\ref{fig:IN} shows the function $N \mapsto 1/I(N)$, for different
values of $b$. Each equilibrium of the nonlinear system
\eqref{eq:NNLIF} corresponds to a crossing of the function
$N \mapsto 1/I(N)$ with the diagonal. Figure \ref{fig:equilibria}
illustrates, for each $ b>0 $, the $ N_\infty $ values solving
$ N_\infty I(N_\infty)=1 $, that is, the values of $N_\infty$ at the
equilibria of \eqref{eq:NNLIF}.
\begin{figure}[h]
  \begin{center}
    \begin{minipage}[c]{0.49\linewidth}
      \begin{center}
        \includegraphics[width=\textwidth]{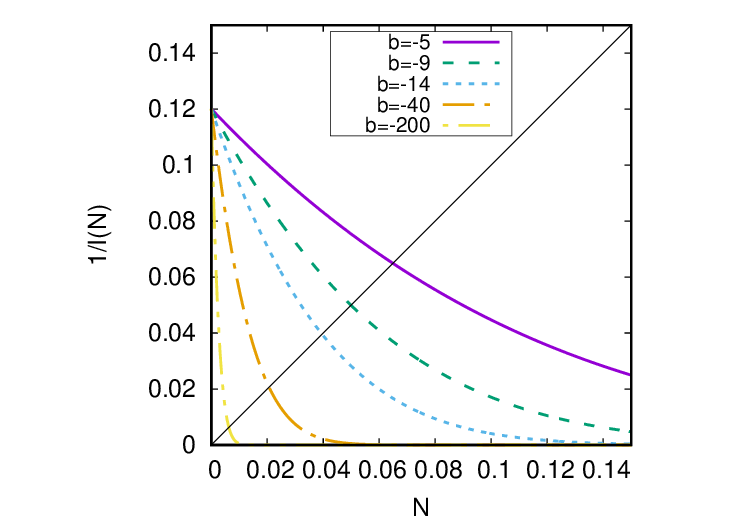}
      \end{center}
    \end{minipage}
    \begin{minipage}[c]{0.49\linewidth}
      \begin{center}
        \includegraphics[width=\textwidth]{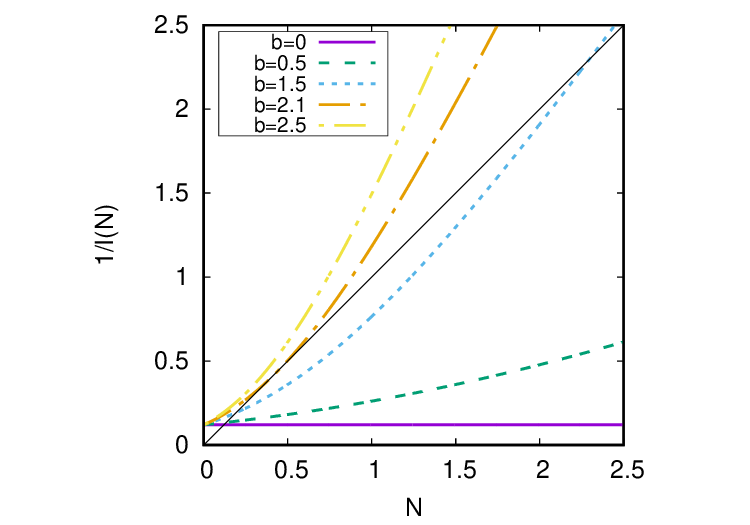}
      \end{center}
    \end{minipage}
  \end{center}
  \caption{{\bf Function $\boldsymbol{\frac{1}{I(N)}}$ (see
      \eqref{eq:I}) for different values of the connectivity parameter
      $\boldsymbol{b}$.}  Each crossing with the diagonal corresponds
    to an equilibrium of equation \eqref{eq:NNLIF}. Reset and firing potential are set to $ V_R=1 $ and $ V_F=2 $ respectively. }
  \label{fig:IN}
\end{figure}

\begin{figure}[h]
  \begin{center}
    \begin{minipage}[c]{0.49\linewidth}
      \begin{center}
        \includegraphics[width=\textwidth]{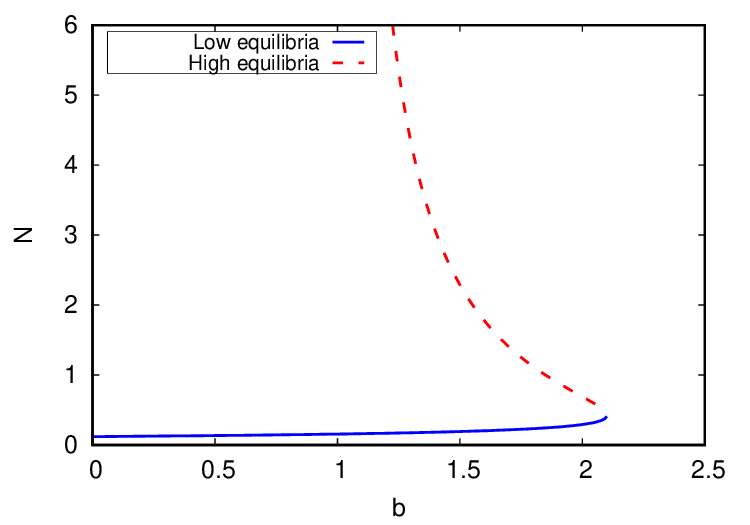}
      \end{center}
    \end{minipage}
  \end{center}
  \caption{{\bf Values of $ \boldsymbol{N} $ solving
      $ \boldsymbol{N(I(N))=1,} $ for each $ \boldsymbol{b} $.}  Point
    $ b=1 $ corresponds to $ b=V_F-V_R $ for the selected parameter
    values. Therefore, there is only one equilibrium for $ b<1 $, two
    equilibria for $1 < b < b_e$ with $b_e$ a certain critical value,
    and none for $b > b_e$. The dashed line has a vertical asymptote
    at $b=1$. Reset and firing potential are set to $ V_R=1 $ and $ V_F=2 $ respectively.}
  \label{fig:equilibria}
\end{figure}

\bigskip

Existence theory for NNLIF models has been developed in
\cite{carrillo2013classical,caceres2011analysis,
  carrillo2014qualitative,caceres2017blow,
  caceres2018analysis, caceres2019global,roux2021towards}.  In
\cite{carrillo2013classical} an existence criterion for the nonlinear
system \eqref{eq:NNLIF} without delay ($d=0$) was given: the solution
exists as long as the firing rate $N(t)$ remains finite. The authors
proved that for $d=0$ and $b<0$ (average-inhibitory case) solutions
are global in time, while for $b > 0$ a blow-up phenomenon (the firing rate $ N_p(t) $ diverges in finite time) may appear
if the initial condition is concentrated near the threshold potential
$V_F$ \cite{caceres2011analysis}, or if a large connectivity parameter
$b$ is considered \cite{roux2021towards}. In that case, solutions are
not globally defined for all times. The blow-up phenomenon disappears
if some transmission delay is taken into account, i.e.  if $d>0$,
leading to global existence as proved in
\cite{caceres2019global}. Blow-up may also be avoided by considering a
stochastic discharge potential \cite{caceres2014beyond}.

At the microscopic scale analogous criteria for existence and blow-up
phenomena were studied in \cite{delarue2015global}. For the
corresponding SDE the notion of solution was extended to the notion of
physical solutions in \cite{delarue2015particle}. Physical solutions
continue after system synchronization, this is after the blow-up
phenomenon, and therefore, they are global in time. In \cite{CR-L},
the particle system was analysed numerically to understand the meaning
of physical solutions for the Fokker-Planck equation
\eqref{eq:NNLIF}. However, the analytical study is still an open
problem.

\

This article is devoted to the study of the long-term behavior of
system \eqref{eq:NNLIF}. Here is an up-to-date summary of its expected
behavior, for which there is strong numerical evidence
\cite{brunel1999fast, caceres2011analysis, CR-L, caceres2018analysis,
  ikeda2022theoretical, pseudo-equilibria, hu2021structure}, but only
few proofs are available:
\begin{enumerate}
\item There is some $b_e > 0$ such that for $b > b_e$ no equilibria
  exist \cite{caceres2011analysis}. In this case, solutions without delay
  ($d=0$) are expected to always blow-up
  \cite{roux2021towards}. Solutions with positive delay $d > 0$ (which
  cannot blow-up) are expected to converge as $t \to +\infty$ to the
  \emph{plateau distribution} (the uniform distribution on
  $[V_R, V_F]$ \cite{CR-L}.
  
\item For $V_F-V_R < b < b_e$ two equilibria exist (in
  \cite{caceres2011analysis} it was proved that at least two
  exist). The higher equilibrium (the one with higher associated
  firing rate) is unstable for any delay $d \geq 0$ and the lower
  equilibrium (the one with lower firing rate) is asymptotically
  stable for any delay $d \geq 0$. Depending on the initial condition
  $p_0$, solutions either converge to the lower equilibrium, blow-up
  (only possible with $d=0$), or converge to the plateau distribution
  (only possible when $d > 0$).
  
\item For $0 < b < V_F-V_R$ the unique equilibrium is asymptotically
  stable regardless of delay. Solutions either converge to this unique
  equilibrium, or blow-up (only possible if $d=0$
  \cite{caceres2011analysis}).

\item There is a critical value $b_p< 0$ such that for any
  $b_p < b < 0$, the unique equilibrium is asymptotically stable for
  all values of the delay. All solutions are expected to converge
  to this unique equilibrium, independently of the delay.
  
\item For $b < b_p$, there is also a unique equilibrium.
  \begin{enumerate}
  \item For small values of the delay, this equilibrium is
    stable. All solutions are expected to converge towards it.
    
  \item For large values of the delay the equilibrium is
    unstable. All solutions are expected to approach a periodic
    solution.
  \end{enumerate}
\end{enumerate}
We emphasize that the above statements are on expected behavior, and
they are not proved in many cases. Let us also give a summary of rigorous
results on asymptotic behavior. In the linear case ($b=0$)
\cite{caceres2011analysis} proved that solutions converge
exponentially fast to the unique steady state. In the quasi-linear
case ($|b|$ small), the same is proved in
\cite{carrillo2014qualitative} if the initial condition is close
enough to the equilibrium.  These results were proved by means of the
entropy dissipation method, considering the relative entropy
functional between the solution $p$ and the stationary solution
$p_\infty$, given by
  \begin{equation*}
  H(p|p_\infty):=\int_{-\infty}^{V_F}
  \frac{\left(p(v,t)-p_\infty(v)\right)^2}{p_\infty(v)} \dv
  = \|p - p_\infty\|_{L^2(p_\infty^{-1})}.
\end{equation*}
It is also known rigorously that there are no periodic solutions if
$b$ is large enough, $V_F\leq 0$ and a transmission delay is
considered \cite{caceres2019global,roux2021towards}.

There is strong evidence for the existence of periodic solutions
(point 5(b)), according to the approximated model studied in
\cite{ikeda2022theoretical}, and these periodic solutions are clearly
seen numerically. In this paper we are able to map the region where
periodic solutions are expected; see Figure \ref{fig:map}. For the
complete model including different populations of excitatory and
inhibitory neurons, considering neurons with refractory periods, as well as the
stochastic discharge potential model, the numerical results in
\cite{caceres2018analysis,caceres2014beyond} also show periodic
solutions.

In a companion article to this one \cite{pseudo-equilibria}, we
numerically show the close relationship between a discrete sequence of
\emph{pseudo equilibria} and the long term behaviour of the nonlinear
system with large delays, with some further analytical results for
small $|b|$. This link gives further evidence for the expected
behavior described above in 1-5 (in the case of large delay $d$).

Applications of entropy methods have only given results for weakly
connected networks (small $b$) so far, i.e.  almost linear
systems. Building on these results, in this article we give further
properties of the linear operator and extend some of them to the
linearization of the PDE \eqref{eq:NNLIF}. We can thus obtain results
about its long term behavior using a different approach that does not
require the connectivity parameter $b$ to be small. Here is a summary
of the results we show in this paper for solutions to
\eqref{eq:NNLIF}:
\begin{enumerate}
\item We give a new proof that, for small $|b|$, the (unique)
  equilibrium is stable. Our proof works for a given small $|b|$, and
  any delay $d \geq 0$; this should be compared to
  \cite{caceres2019global}, where a condition on $d$ was needed. See Section
  \ref{sec:weakly-connected} for a precise statement.
  The proof we present here is also different from the one, mentioned above,
  that we proved in \cite{pseudo-equilibria} by pseudo equilibria sequence.
  
\item With a similar proof, in Section
  \ref{sec:linearized-to-nonlinear} we show that linearized stability
  of an equilibrium implies its nonlinear stability without delay,
  which is a new result as far as we know. We expect the same to be
  true also for any positive delay, but the proof runs into technical
  difficulties that we have not been able to overcome.
  
\item More importantly, we give explicit criteria to study whether a
  given equilibrium is linearly stable or not. These criteria are
  intimately related to the slope at which the curve
  $N \mapsto (I(N))^{-1}$ crosses the diagonal in Figure \ref{fig:IN}, which
  also determines the behaviour of the firing rate sequence,
  which provides the pseudo equilibria sequence (see
  \cite{pseudo-equilibria} for more details).
\end{enumerate}

Point 3 is in our opinion the most surprising one, since it gives a
rather complete picture of the asymptotic behavior of the PDE
\eqref{eq:NNLIF}. In order to state more precisely our theorem
regarding point 3 we need to introduce a few definitions. First, given
a fixed $N \geq 0$ and $b \in \R$ we define the linear operator
$L_{N}$, acting on functions $u = u(v)$, by
\begin{equation}
  \label{eq:linear-operator}
  L_{N} u := \p^2_v u + \p_v((v-bN) u)
  + \delta_{V_R} N_u.
\end{equation}
This operator is formally obtained by replacing $N_p$ on the nonlinear
term in the right hand side of \eqref{eq:NNLIF-PDE} by
$N$. The associated linear PDE,
\begin{equation}
  \label{eq:linear-intro}
  \p_t p = L_{N} p,
\end{equation}
with the same Dirichlet boundary condition as before and
$N_u(t) := -\p_v u(V_F,t)$, models a situation where neurons evolve
with a fixed background firing rate.  A central part of our strategy
is based on a careful study of the linear operator $L_{N}$. The PDE
\eqref{eq:linear-intro} has a unique (probability) stationary state
$p_\infty$, explicitly given by a very similar expression to
\eqref{eq:stationary-state}:
\begin{equation}
  \label{eq:stationary-state-linear}
  p_\infty^N(v) = \overline{N} e^{-\frac{\left(v-b N \right)^2}{2}}
  \int_{\max(v, V_R)}^{V_F}e^{\frac{\left(w-b N \right)^2}{2}} \d w,
\end{equation}
where $\overline{N}$ is a normalizing factor to ensure that $p_\infty^N$
is a probability distribution. It is also known that solutions to this
linear equation converge exponentially to equilibrium in the weighted
norm $L^2(1/p_\infty)$, which is a natural norm when considering a
quadratic entropy \cite{caceres2011analysis,
  carrillo2014qualitative}. By studying some well-posedness and
regularization bounds for the linear equation, we are able to show
that this exponential decay is also true in the smaller space $X$
given by
\begin{equation}
  \label{eq:space-X}
  X := \{ u \in \mathcal{C}(-\infty, V_F] \cap \mathcal{C}^1(-\infty, V_R]
  \cap \mathcal{C}^1[V_R, V_F] \mid u(V_F) = 0 \ \text{and} \ \|u\|_X < \infty\},
\end{equation}
where
\begin{equation}\label{eq:norm-X}
  \|u \|_X :=
  \|u \|_\infty + \|\p_vu \|_\infty
  + \|u \|_{L^2(\varphi)} + \|\p_vu \|_{L^2(\varphi)}
\end{equation}
and
\begin{equation*}
  \varphi(v)
  := \exp\left( \frac{(v - bN)^2}{2} \right),
  \qquad v \in (-\infty, V_F].
\end{equation*}
It is understood that the $L^2(\varphi)$ norms are taken on
$(-\infty, V_F]$. The linear space $X$ is a Banach space with the
above norm. We observe that $X \subseteq L^2((p_\infty^N)^{-1})$ since
for some constant $C > 0$
\begin{equation}
  \label{eq:4}
  \frac{1}{p_\infty^N(v)} = C \varphi(v)
  \qquad \text{for all $v \leq V_R$,}
\end{equation}
and for any $u \in X$ we have
\begin{multline}
  \label{eq:11}
  \int_{V_R}^{V_F} u(t,v)^2 p_\infty^N(v)^{-1} \d v
  \lesssim
  \int_{V_R}^{V_F} u(t,v)^2 \frac{1}{V_F - v} \d v
  \\
  \leq
  \| \p_v u(\cdot,t)\|_{\infty}^2 \int_{V_R}^{V_F} (V_F - v) \d v
  \lesssim \| u \|_X^2,
\end{multline}
where we have used the estimate
\begin{equation*}
  |u(t,v)| \leq \|\p_v u(t, \cdot)\|_\infty (V_F - v)
  \qquad \text{for $v \leq V_F$,}
\end{equation*}
easily obtained via the mean value theorem and the fact that
$u(V_F) = 0$. Inequalities \eqref{eq:4} and \eqref{eq:11} show that
$\| u \|_{L^2(p_\infty^{-1})} \lesssim \| u\|_X$, so
$X \subseteq L^2(p_\infty^{-1})$.

The property that the linear PDE decays exponentially to equilibrium
in a certain space is often stated by saying that it has a
\emph{spectral gap} in that space. The technique we use to ``shrink''
the space where a given linear PDE has a spectral gap was used in
\cite{Canizo2010a} in relation to a coagulation-fragmentation PDE, and
is linked to operator techniques described in \cite{Gualdani2018} and
\cite{Mischler2016}. A spectral gap in the space $X$ allows us to
carry out stability arguments in a more natural way, since now the
firing rate $N_p$, considered as an operator $-\p_v p(V_F)$ acting on
$p$, is a \emph{bounded} linear operator on $X$. This allows us to
relate the linearized and the nonlinear equations, and to give more
precise estimates on the range of parameters where stability or
instability take place.

Given $b \in \R$ and a probability equilibrium $p_\infty$ of the PDE
\eqref{eq:NNLIF}, we call $N_\infty := -\p_v p_\infty(V_F)$ the
equilibrium firing rate, and we define the \emph{linearized} equation
at $p_\infty$ by
\begin{align}
  \label{eq:linearized-pde}
  \p_t u
  = &\p^2_v u + \p_v((v- b N_\infty) u)
  + \delta_{V_R} N_u - b N_u(t-d) \p_v p_\infty
  \\
  \nonumber
  = &L_\infty u - b N_u(t-d) \p_v p_\infty,
\end{align}
where $L_\infty \equiv L_{N_\infty}$
is the linear operator from \eqref{eq:linear-operator}, with
$N = N_\infty$. We notice that this equation has a delay $d$ in the
last term. We add the same boundary condition as before, that is,
\begin{equation}
  \label{eq:linearized-pde-bc}
  u(V_F, t) = 0, \qquad \text{for $t > 0$.}
\end{equation}
\begin{dfn}
  Take $b \in \R$ and $d \geq 0$. We say that a probability
  equilibrium $p_\infty$ of system \eqref{eq:NNLIF} is \emph{linearly
    stable} if there exist $C \geq 1$ and $\lambda > 0$ such that all
  solutions $u$ to the linearized equation
  \eqref{eq:linearized-pde}--\eqref{eq:linearized-pde-bc} with an
  initial condition $u_0 \in \mathcal{C}([-d,0]; X)$ such that
  \begin{equation}
    \label{eq:u0-mass-0}
    \int_{-\infty}^0 u_0(v, t) \d v = 0
    \qquad
    \text{for all $t \in [-d, 0]$}
  \end{equation}
  satisfy
  \begin{equation}
    \label{eq:linearized-stability-def}
    \| u(\cdot,t) \|_X \leq C e^{-\lambda t}
    \sup_{\tau \in [-d,0]} \| u_0(\cdot, \tau) \|_X
    \qquad
    \text{for all $t \geq 0$.}
  \end{equation}
  We say that $p_\infty$ is \emph{linearly unstable} if this is not
  true for $\lambda = 0$ and any $C \geq 1$; that is, if for any
  $C \geq 1$ there exists $u_0$ satisfying \eqref{eq:u0-mass-0} and
  $t > 0$ such that
  \begin{equation*}
    \| u(\cdot,t) \|_X \geq C
    \sup_{\tau \in [-d,0]} \| u_0(\cdot, \tau) \|_X.
  \end{equation*}
\end{dfn}

For the rest of the paper, whenever $p_\infty$ is an equilibrium of
the nonlinear problem, we define
\begin{equation}
  \label{eq:q}
  q(v,t) := e^{t L_\infty} \p_v p_\infty,
\end{equation}
where $e^{t L_\infty}$ denotes the flow of the linear problem
\eqref{eq:linear-operator} (that is: $e^{t L_\infty} u_0$ is the
solution to problem \eqref{eq:linear-operator} with initial condition
$u_0$). In agreement with our previous notation we set
\begin{equation}
  \label{eq:Nq}
  N_q(t) := - \p_v q(V_F, t),
  \qquad \text{for $t > 0$,}
\end{equation}
and call $\hat{N}_q$ the Laplace transform of $N_q$,
\begin{equation}
  \label{eq:hatNq}
  \hat{N}_q(\xi) := \int_{0}^{\infty} e^{-\xi t} N_q(t) \d t.
\end{equation}
We prove in Section \ref{sec:linear} that
$|N_q(t)| \leq C e^{-\lambda t}$ for some $C, \lambda > 0$, and hence
$\hat{N}_q(\xi)$ is well defined for $\Re(\xi) > -\lambda$. Here is
the final theorem that we are able to prove in Section
\ref{sec:linearized-sg}:
\begin{thm}
  \label{thm:N-asymptotic-behavior-main}
  Take $b \in \R$ and $d \geq 0$, and let $p_\infty$ be a
  (probability) stationary state of system \eqref{eq:NNLIF}, and
  define $\hat{N}_q$ by \eqref{eq:q}--\eqref{eq:hatNq}. The steady
  state $p_\infty$ is linearly stable if and only if all zeros of the
  analytic function
  \begin{equation*}
    \Phi_d(\xi) := 1+b\hat{N_q}(\xi) \exp(-\xi d)
  \end{equation*}
  (defined for $\Re(\xi) > -\lambda$) are located on the real negative
  half-plane $\{ \xi \in \C \mid \Re(\xi) < 0\}$.
\end{thm}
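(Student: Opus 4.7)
The plan is to reduce linear stability of \eqref{eq:linearized-pde} to absence of zeros of $\Phi_d$ in the closed right half-plane via a Laplace-transform analysis of a Volterra equation for $N_u$. Treating $-bN_u(t-d)\p_v p_\infty$ as a scalar forcing, Duhamel's formula applied to $\p_t u = L_\infty u - bN_u(t-d)\p_v p_\infty$ gives
\[
u(t) = e^{tL_\infty}u_0(0) - b\int_0^t N_u(s-d)\,q(t-s)\d s,
\]
with $q$ as in \eqref{eq:q}. I next apply the functional $N_\cdot := -\p_v(\cdot)(V_F)$, which is bounded on $X$ by \eqref{eq:norm-X}, to get the scalar Volterra equation $N_u(t) = g(t) - b\int_0^t N_u(s-d)\,N_q(t-s)\d s$, where $g(t) := N_{e^{tL_\infty}u_0(0)}(t)$ decays exponentially by the spectral gap of $L_\infty$ on mass-zero data in $X$ proved earlier. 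Splitting the convolution at $s=d$ (so that $N_u(s-d)$ is known from the initial history $u_0$ for $s\leq d$) and taking Laplace transforms yields
\[
\Phi_d(\xi)\,\hat N_u(\xi) = \hat g(\xi) - b\,e^{-\xi d}\,\hat N_q(\xi)\int_{-d}^{0}N_{u_0}(r)\,e^{-\xi r}\d r
\]
on $\{\Re\xi > -\lambda\}$, where $\lambda$ is the spectral-gap rate. The right side is analytic and controlled on closed vertical strips, so singularities of $\hat N_u$ in this half-plane can only sit at zeros of $\Phi_d$.

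For the sufficiency direction, I assume all zeros of $\Phi_d$ lie in $\{\Re\xi<0\}$. Since $\hat N_q$ and $\hat g$ are Laplace transforms of exponentially decaying $L^1$ functions, Riemann--Lebesgue gives $\Phi_d(\xi)\to 1$ as $|\Im\xi|\to\infty$ uniformly on vertical strips. Combined with analyticity and non-vanishing on $\{\Re\xi\geq 0\}$, a compactness/continuity argument produces some $\lambda'\in(0,\lambda)$ with $|\Phi_d|$ bounded below on the strip $\{-\lambda'\leq\Re\xi\leq 0\}$ (and hence on all of $\{\Re\xi\geq -\lambda'\}$ by the behaviour at infinity). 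Inverting the Laplace transform along $\Re\xi = -\lambda'$ then gives $|N_u(t)|\lesssim e^{-\lambda' t}\sup_{\tau\in[-d,0]}\|u_0(\cdot,\tau)\|_X$. Plugging back into Duhamel and invoking the exponential decay of $e^{tL_\infty}\p_v p_\infty$ in $X$ (valid because $\int\p_v p_\infty\d v = 0$) yields \eqref{eq:linearized-stability-def}.

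For the necessity direction, given a zero $\xi_0$ of $\Phi_d$ with $\Re\xi_0\geq 0$, I build an explicit non-decaying mode. Since $\xi_0$ lies strictly to the right of the spectrum of $L_\infty$ on the mass-zero subspace of $X$, the element $U := -b\,e^{-\xi_0 d}(\xi_0 - L_\infty)^{-1}\p_v p_\infty$ is well defined, has zero mass (inherited from $\p_v p_\infty$), and satisfies $N_U = -b\,e^{-\xi_0 d}\hat N_q(\xi_0) = 1$ because $\Phi_d(\xi_0)=0$. Consequently $(\xi_0 - L_\infty)U = -b\,e^{-\xi_0 d}N_U\,\p_v p_\infty$, so $u(v,t):=e^{\xi_0 t}U(v)$ solves \eqref{eq:linearized-pde} with initial history $u_0(\cdot,\tau)=e^{\xi_0\tau}U$ on $[-d,0]$. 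Since $\|u(\cdot,t)\|_X = e^{\Re\xi_0 t}\|U\|_X$ does not decay while $\sup_{\tau\in[-d,0]}\|u_0(\cdot,\tau)\|_X = \|U\|_X$, no estimate of the form \eqref{eq:linearized-stability-def} with $\lambda>0$ can hold, contradicting linear stability.

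The main obstacle is the contour-shift step in sufficiency: I need to exclude zeros of $\Phi_d$ accumulating on the imaginary axis from inside the strip and secure enough decay of $\hat N_u$ on the line $\Re\xi = -\lambda'$ for the inverse Laplace integral to converge absolutely. Both rely on $L^2$ Paley--Wiener-type bounds for $\hat g$ and $\hat N_q$, which in turn use the regularization estimates for $e^{tL_\infty}$ built into the $X$-space framework; tellingly, working in $X$ rather than in $L^2(1/p_\infty)$ is exactly what renders $N_\cdot$ a bounded linear functional and makes the Volterra reduction work in the first place.
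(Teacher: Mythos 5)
Your high-level strategy matches the paper's: reduce linearized stability to a scalar Volterra (renewal) equation for $N_u$, and read off stability from the location of the zeros of $1 - \hat h$ (with $h(t) = -bN_q(t)$) via Laplace-transform analysis. There are, however, two genuine departures worth noting.

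In the \emph{necessity} direction, your explicit eigenmode construction is actually cleaner and more self-contained than the paper's. You define $U := -b\,e^{-\xi_0 d}(\xi_0 - L_\infty)^{-1}\p_v p_\infty$, which is well-defined on the mass-zero subspace of $X$ by Proposition \ref{prp:gap-X} whenever $\Re(\xi_0) > -\lambda$, and the identity $\hat N_q(\xi_0) = N\bigl[(\xi_0 - L_\infty)^{-1}\p_v p_\infty\bigr]$ (a Laplace transform of the semigroup action) gives $N_U = 1$ precisely because $\Phi_d(\xi_0)=0$. The mode $u(v,t) = e^{\xi_0 t}U(v)$ then solves \eqref{eq:linearized-pde} and does not decay. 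This is both more concrete and more rigorous than the paper's ``one can choose an initial condition such that $\xi_0$ is a pole of $F$,'' which relies implicitly on Theorem \ref{thm:asymptotic} without exhibiting the datum. (One cosmetic point: $\xi_0$ is complex, so one should take real and imaginary parts of $e^{\xi_0 t}U$ to stay within real-valued solutions; this is routine.) Note also that $U \ne 0$ since $(\xi_0-L_\infty)U = -b e^{-\xi_0 d}\p_v p_\infty \ne 0$.

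In the \emph{sufficiency} direction there is a real gap that you yourself flag but do not close. After shifting the contour to $\Re\xi = -\lambda'$, the inverse Laplace integral
\begin{equation*}
  N_u(t) = \frac{1}{2\pi i}\int_{-\lambda'-i\infty}^{-\lambda'+i\infty} e^{\xi t}\,\hat N_u(\xi)\,\d\xi
\end{equation*}
is not obviously absolutely convergent: the Laplace transform of a bounded function decays only like $O(1/|\Im\xi|)$ on vertical lines, which is $L^2$ but not $L^1$. ``$L^2$ Paley--Wiener bounds'' therefore do not suffice to make this integral absolutely convergent, which is what your argument requires. The paper addresses exactly this point in Lemma \ref{lem:F_inverse_bounded}: it writes $F = F_1 + F_2$ with $F_2 = \l{g}\sum_{n<N}\l{h}^n$ (the Laplace transform of a concrete function of locally bounded variation, to which the inversion Lemma \ref{lem:inverse_Laplace} applies conditionally) and $F_1 = \l{h}^N\l{g}/(1-\l{h})$, whose vertical-line $L^1$ norm is controlled via Hausdorff--Young once $N$ is taken large enough to exploit the $L^p$ integrability \eqref{eq:c1} of $h$. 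Without some such decomposition (or an explicit appeal to Theorem \ref{thm:asymptotic}, which packages this argument), your contour shift does not produce the claimed pointwise estimate on $N_u(t)$. So either invoke the paper's Volterra asymptotics theorem directly, or reproduce the $F_1 + F_2$ splitting; the bare $L^2$ bound leaves the inversion unjustified.
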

This result simplifies the numerical study of the stability of
equilibria, and gives some theoretical consequences. For example, we
have:
\begin{cor}
  \label{cor:stability-d-small}
  Take $b \in \R$. If a probability stationary state $p_\infty$ 
  of system \eqref{eq:NNLIF} is linearly stable for $d=0$, then it is
  linearly stable also for small enough $d > 0$.
\end{cor}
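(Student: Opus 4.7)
The plan is to apply Theorem \ref{thm:N-asymptotic-behavior-main}, which reduces the statement to showing that the analytic function $\Phi_d(\xi) = 1 + b\hat{N}_q(\xi)e^{-\xi d}$ has no zeros in the closed right half-plane $\{\Re(\xi) \geq 0\}$ for $d>0$ small, given that this is true for $d=0$. My strategy is a compactness reduction combined with a continuity-in-$d$ argument: one splits $\{\Re(\xi)\geq 0\}$ into a compact region $K_R := \{|\xi|\leq R,\ \Re(\xi)\geq 0\}$ and its complement, and rules out zeros on each piece separately.

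On the complement, the elementary inequality $|e^{-\xi d}|\leq 1$ on $\{\Re(\xi)\geq 0\}$ gives $|\Phi_d(\xi)-1|\leq |b|\,|\hat{N}_q(\xi)|$ uniformly in $d\geq 0$. Since $N_q$ decays exponentially (from Section \ref{sec:linear}), $\hat{N}_q$ is analytic and bounded on $\{\Re(\xi)\geq 0\}$, and tends to $0$ as $|\xi|\to\infty$ in this half-plane: as $\Re(\xi)\to\infty$ by dominated convergence, and as $|\Im(\xi)|\to\infty$ by Riemann--Lebesgue, with uniformity in the real part coming from the $L^1$-compactness of the family $\{e^{-\sigma t}N_q(t): \sigma\in[0,\Sigma_0]\}$. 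I would therefore fix $R$ so large that $|b\hat{N}_q(\xi)|<1/2$ for $|\xi|\geq R$, $\Re(\xi)\geq 0$; then $\Phi_d$ cannot vanish outside $K_R$ for any $d\geq 0$. On the compact set $K_R$ the assumption and continuity of $\Phi_0$ yield $m:=\min_{K_R}|\Phi_0|>0$, and the bound $|e^{-\xi d}-1|\leq |\xi|d\leq Rd$ valid on $\{\Re(\xi)\geq 0\}$ gives
\begin{equation*}
  \sup_{\xi\in K_R}|\Phi_d(\xi)-\Phi_0(\xi)|\leq |b|\,R\,d\,\sup_{K_R}|\hat{N}_q|,
\end{equation*}
which is smaller than $m/2$ for $d$ sufficiently small. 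Hence $|\Phi_d|\geq m/2$ on $K_R$; combined with the estimate outside $K_R$, $\Phi_d$ is zero-free on $\{\Re(\xi)\geq 0\}$ for small $d>0$, so Theorem \ref{thm:N-asymptotic-behavior-main} delivers linear stability.

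The part I expect to require the most care is the decay of $\hat{N}_q$ at infinity inside $\{\Re(\xi)\geq 0\}$, specifically the uniformity in $\Re(\xi)$ needed for the Riemann--Lebesgue step. If the regularization theory developed in Section \ref{sec:linear} provides $N_q\in C^1$ with integrable derivative, a single integration by parts gives $|\hat{N}_q(\xi)|=O(1/|\xi|)$ and trivializes this step; otherwise the $L^1$-compactness argument above does the job. Beyond this, the proof is a routine Rouché-style perturbation that exploits precisely the feature that, as $d$ varies continuously, only zeros lying in a fixed compact region can potentially migrate into $\{\Re(\xi)\geq 0\}$.
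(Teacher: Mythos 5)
Your proof is correct and follows essentially the same strategy as the paper's: use the decay of $\hat{N}_q$ at infinity to confine any possible zero of $\Phi_d$ with nonnegative real part to a fixed compact set, then exploit continuity of $\Phi_d$ in $d$ on that compact set to perturb from $\Phi_0$. The only cosmetic difference is that the paper works in the slightly enlarged half-plane $\{\Re(\xi) \geq -\mu\}$ (the margin $\mu>0$ coming from the $d=0$ stability), whereas you work directly on $\{\Re(\xi)\geq 0\}$, which has the small advantage that $|e^{-\xi d}|\leq 1$ holds uniformly for all $d\geq 0$ there.
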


\begin{proof}
  If $p_\infty$ is linearly stable for $d=0$, it means that for some
  $\mu > 0$, the function $\Phi_0$ does not have any zeros on
  $\C_{\mu} := \{ \xi \in \C \mid \Re(\xi) \geq -\mu \}$.
  By properties of the Laplace transform, $\hat{N_q}(\xi)$ tends to zero
  both when $\Im(\xi) \to \pm \infty$ and when $\Re(\xi) \to
  +\infty$. Hence all possible zeros of $\Phi_d$ in $\C_\mu$, for any
  $d \geq 0$, must be in some fixed compact subset
  $K \subseteq C_\mu$. Since the functions $\Phi_d$ are continuous and
  $\Phi_d(\xi) \to \Phi_0(\xi)$ uniformly as $d \to 0$, we see that for
  $d$ small, the function $\Phi_d$ cannot have any zeros on $K$, and
  hence cannot have zeros on $\C_\mu$.
\end{proof}
There is a specific value of $\hat{N}_q$ which is linked to the
function $I(N)$: in Lemma \ref{lem:ddtN} we show that
\begin{equation*}
  b \hat{N_q}(0) = \frac{\mathrm{d}}{\mathrm{d}N} \Big\vert_{N = N_\infty}
  \left( \frac{1}{I(N)} \right).
\end{equation*}
As a consequence, analyzing the zeros of $\Phi_d(\xi)$ gives us the
following criterion of sta\-bi\-li\-ty/ins\-ta\-bi\-li\-ty, often
easier to check:
\begin{thm}
  \label{thm:stability-main}
  Let us set $V_R < V_F \in \R$, $b \in \R$, and let $p_\infty$ be a
  probability equilibrium of the nonlinear equation
  \eqref{eq:NNLIF}. Call $N_\infty := -\p_v p_\infty(V_F)$ its
  associated firing rate.
  \begin{enumerate}
  \item If
    \begin{equation*}
      \frac{\mathrm{d}}{\mathrm{d}N} \Big\vert_{N = N_\infty}
      \left( \frac{1}{I(N)} \right)
      > 1
    \end{equation*}
    then $p_\infty$ is a linearly unstable equilibrium, for any delay
    $d \geq 0$.
    
  \item If
    \begin{equation*}
      |b|\int_0^\infty  \vert N_q (t)\vert \d t <1
    \end{equation*}
    then the equilibrium $p_\infty$ is linearly stable, for any delay
    $d \geq 0$.

  \item If
    \begin{equation}
      \label{eq:conditional-instability-condition-intro}
      \frac{\mathrm{d}}{\mathrm{d}N} \Big\vert_{N = N_\infty}
      \left( \frac{1}{I(N)} \right)
      < -1
    \end{equation}
    then the equilibrium $p_\infty$ is linearly unstable when the
    delay $d$ is large enough.
  \end{enumerate}
\end{thm}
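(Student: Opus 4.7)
The plan is to reduce all three parts to statements about the zeros of the analytic function \(\Phi_d(\xi) = 1 + b\hat{N}_q(\xi)e^{-\xi d}\) via Theorem \ref{thm:N-asymptotic-behavior-main}, using the identity \(b\hat{N}_q(0) = \frac{\d}{\d N}\big|_{N=N_\infty}(1/I(N))\) from Lemma \ref{lem:ddtN} to read off the value \(\Phi_d(0) = 1 + b\hat{N}_q(0)\) in each case.

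Parts (2) and (3) should be short. For (2), the direct Laplace estimate \(|b\hat{N}_q(\xi)e^{-\xi d}| \le |b|\int_0^\infty |N_q(t)|\,\d t < 1\) on \(\{\Re\xi \geq 0\}\) forces \(|\Phi_d - 1| < 1\), so \(\Phi_d\) has no zeros in the closed right half-plane; because \(N_q\) decays exponentially (from the spectral gap for \(L_\infty\) obtained in Section \ref{sec:linear}), the strict inequality persists on a slightly wider strip \(\{\Re\xi > -\mu\}\) for some \(\mu>0\), supplying the exponential decay rate required by Theorem \ref{thm:N-asymptotic-behavior-main}. For (3) the hypothesis gives \(\Phi_d(0) = 1 + b\hat{N}_q(0) < 0\) for every \(d \geq 0\); since \(\Phi_d(\xi) \to 1\) as \(\xi \to +\infty\) along the real axis, the intermediate value theorem yields a real positive zero of \(\Phi_d\), proving linear instability via Theorem \ref{thm:N-asymptotic-behavior-main} --- in fact for every \(d \geq 0\), which is stronger than the stated conclusion ``for \(d\) large enough''.

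Part (1) is the main obstacle. Here \(\Phi_d(0) = 1 + b\hat{N}_q(0) > 2\), while \(\Phi_d(\xi) \to 1\) as \(\xi \to +\infty\) along the real axis as well, so both endpoints are positive and IVT on the real line does not deliver a zero; the unstable zero, if it exists, must sit off the real axis. The approach I would follow is an argument-principle count along the contour \(C_R = [-iR, iR] \cup \{Re^{i\theta}: \theta \in [-\pi/2, \pi/2]\}\): by the Riemann--Lebesgue decay of \(\hat{N}_q\) together with \(|e^{-\xi d}| \leq 1\) on \(\{\Re\xi \geq 0\}\), \(\Phi_d \to 1\) uniformly on the large semicircle, so the number of zeros of \(\Phi_d\) inside \(C_R\) equals the winding number of \(\eta \mapsto \Phi_d(i\eta)\) around the origin. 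Exhibiting a nonzero winding under the sole hypothesis \(b\hat{N}_q(0) > 1\) is the crux of the proof, and I expect it will require additional structural information on \(\hat{N}_q\) beyond its value at \(\xi=0\), in particular the representation \(L_\infty^{-1}\partial_v p_\infty = \psi/b\) with \(\psi = \partial_N p_\infty^N|_{N=N_\infty}\) (obtained by differentiating the linear steady-state equation \(L_N p_\infty^N = 0\) in \(N\) to get \(L_\infty \psi = b\,\partial_v p_\infty\)), so as to control the image curve \(\Phi_d(i\eta)\) well enough on the imaginary axis to count windings.
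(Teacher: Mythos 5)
Your proposal contains a sign error that makes you swap the roles of parts (1) and (3). You take $b\hat{N}_q(0) = \frac{\mathrm{d}}{\mathrm{d}N}\big|_{N=N_\infty}\!\left(1/I(N)\right)$ at face value, but tracing through Lemma \ref{lem:ddtN}: since $h(t) = -bN_q(t)$ and $L_\infty^{-1}(\p_v p_\infty) = -\int_0^\infty e^{tL_\infty}\p_v p_\infty\,\d t$, one has $\hat{h}(0) = -b\hat{N}_q(0) = \frac{\mathrm{d}}{\mathrm{d}N}\big|_{N=N_\infty}\!\left(1/I(N)\right)$, so the correct relation is $b\hat{N}_q(0) = -\frac{\mathrm{d}}{\mathrm{d}N}\big|_{N=N_\infty}\!\left(1/I(N)\right)$ (this is the sign used in the caption of Figure~\ref{fig:integral-h}). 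Consequently the hypothesis of part (1) gives $\Phi_d(0)=1+b\hat{N}_q(0)<0$, and the hypothesis of part (3) gives $\Phi_d(0)>2$. So the intermediate-value argument you offer for ``(3)'' is actually the proof of (1), valid for every $d\ge 0$ as the theorem states, while (3) is the hard case where both real endpoints of $\Phi_d$ are positive and IVT on the real axis fails.

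This has two downstream consequences. First, your claim that (3) holds for every $d\ge 0$ (``stronger than stated'') is false — with the correct sign $\Phi_d(0)>2$, and indeed one sees numerically (Figure~\ref{fig:map}) that these equilibria \emph{are} stable for small $d$, so the hypothesis ``$d$ large enough'' is essential and cannot be removed. Second, the argument-principle framework you sketch for the hard case is the right one, but your expectation that one needs ``additional structural information on $\hat{N}_q$ beyond its value at $\xi=0$'' points in the wrong direction. What makes the argument close is not extra structure of $\hat{N}_q$ but exploiting $d$ large: the rescaled image curve $k\mapsto\hat{h}(ik/d)e^{-ik}$ converges locally uniformly to the circle $\hat{h}(0)e^{-ik}$, which has radius $|\hat{h}(0)|>1$, so for $d$ large the curve must cross the ray $(1,\infty)$; and because $|\hat{h}'|$ is uniformly bounded by $M:=\int_0^\infty t|h(t)|\,\d t<\infty$ (finite by the spectral gap of Section~\ref{sec:linear}), once $d\ge M$ every such crossing is negatively oriented, yielding nonzero winding number around $1$ and hence a zero of $\Phi_d$ in the closed right half-plane; a small shift to $\Re\xi = y>0$ then gives a zero with strictly positive real part. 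No representation $L_\infty^{-1}\p_v p_\infty=\psi/b$ is needed. Your argument for part (2) is essentially the paper's: the strict $L^1$ bound gives $|b\hat{N}_q(\xi)e^{-\xi d}|<1$ on $\{\Re\xi\ge0\}$, so $\Phi_d$ has no zeros there, and Theorem~\ref{thm:N-asymptotic-behavior-main} concludes linear stability.
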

This criterion is also in agreement with the prediction of the pseudo
equilibria sequence given in \cite{pseudo-equilibria} for networks
with large transmission delay.  It strikes us that the criterion is
given in terms of the slope of $\frac{1}{I(N)}$ (see \eqref{eq:I} for
an explicit expression of $I$), which determines the behaviour of the
discrete sequence of firing rates studied in \cite{pseudo-equilibria}.

\medskip The function $1/I(N)$ is explicit (see \eqref{eq:I}) and the
only obstacle to checking analytically whether the conditions in
Theorem \ref{thm:stability-main} hold is that the expression of $I(N)$
is cumbersome to work with. Some rigorous properties of $I$ are given
in \cite{caceres2011analysis, pseudo-equilibria}, but in any case the
following is clearly seen numerically (Figure \ref{fig:IN}):
\begin{itemize}
\item Case 1 in Theorem \ref{thm:stability-main} holds in the
  excitatory case $b > 0$, assuming there are two equilibria. The
  higher equilibrium satisfies
  $ \frac{\mathrm{d}}{\mathrm{d}N} \Big\vert_{N = N_\infty} \left(
    \frac{1}{I(N)} \right)> 1 $ (with $N_\infty$ the highest
  stationary firing rate). Hence assuming there are two equilibria,
  the higher equilibrium is unstable.

\item Case 2 in Theorem \ref{thm:stability-main} seems hard to check
  analytically, though numerical checks are straightforward. This case
  takes place for the lower equilibrium if there are two of them, and
  for the single equilibrium whenever there is a unique one.
  
\item Case 3 in Theorem \ref{thm:stability-main} can be rigorously
  proved to hold when $b < 0$ is sufficiently negative. Hence there is
  some $b^* < 0$ such that for $b < b^*$ and large enough $d > 0$, the
  unique equilibrium $p_\infty$ is linearly unstable.
\end{itemize}

\begin{figure}[h]
  \begin{center}
    \includegraphics[width=0.49\linewidth]{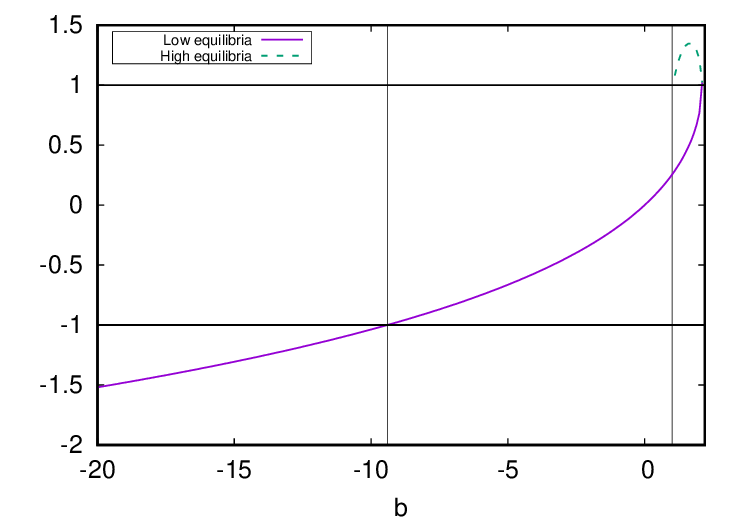}
  \end{center}
  \caption{{\bf Value of
      $S(b) := - b \int_{0}^{\infty} N_q(t) \d t =
      \frac{\mathrm{d}}{\mathrm{d}N} \Big\vert_{N = N_\infty} \left(
        \frac{1}{I(N)} \right)$ for different values of $b$.} Vertical
    lines at $ b=-9.4 $ and $ b=1 $ separate regions with different
    behaviours in terms of stability for equilibria of equation
    \eqref{eq:linearized}: for $ b<-9.4 $ (one equilibrium) we find
    that $S(b) < -1$, which coincides with point 3 of Theorem
    \ref{thm:stability-main}, i.e. the region of $ b $ where
    equilibria stability depends on the delay value; in the case
    $ -9.4<b<1 $ (one equilibrium), we get
    $|S(b)| = |b| \int_0^\infty |N_q(t)| \d t < 1$ (since the function
    $N_q(t)$ is numerically seen to be always monotone), leading us to
    the scenario in point 2 of Theorem \ref{thm:stability-main}, where
    regardless of the delay value, the equilibrium is stable. For any
    $ b>1 $ there are two equilibria: for the lower equilibrium (solid
    line) the situation is the same as in the previous case, while for
    the higher equilibrium (dashed line), $S(b) > 1 $ and point 1 of
    Theorem \ref{thm:stability-main} is fulfilled. Reset and firing potential are set to $ V_R=1 $ and $ V_F=2 $ respectively.}
  \label{fig:integral-h}
\end{figure}

We note that Theorem \ref{thm:stability-main} may not be fully
exhaustive. First, because it does not speak about the critical cases
where
$\frac{\mathrm{d}}{\mathrm{d}N} \Big\vert_{N = N_\infty} (1/I(N)) =
\pm 1$. And second, even ignoring these cases, it may still happen
that none of its three possibilities are satisfied. However, from our
simulations we suspect that the criterion is in fact exhaustive except
at the critical cases, since we expect $N_q(t)$ to have a fixed sign
for all $t > 0$ (which seems clear numerically), and then
\begin{equation*}
|b|\int_0^\infty \vert N_q (t)\vert \d t =\left\vert
  \frac{\mathrm{d}}{\mathrm{d}N} \Big\vert_{N = N_\infty} \left(
    \frac{1}{I(N)} \right) \right\vert.
\end{equation*}
On the other hand, point 3 of Theorem \ref{thm:stability-main} does
not give information on the case of strongly inhibitory systems with
small delay. For those cases we must study the zeros of $\Phi_d$, as
indicated by Theorem \ref{thm:N-asymptotic-behavior-main}, and the
best we can say is Corollary \ref{cor:stability-d-small}.

The stability criteria in Theorem \ref{thm:N-asymptotic-behavior-main}
and \ref{thm:stability-main} can be checked numerically and allow us
to give a rather complete picture of the stability of equilibria,
including the threshold delay for which the equilibrium becomes
unstable, as shown in Figure \ref{fig:map}.
\begin{figure}[h]
  \begin{center}
    \includegraphics[width=0.49\linewidth]{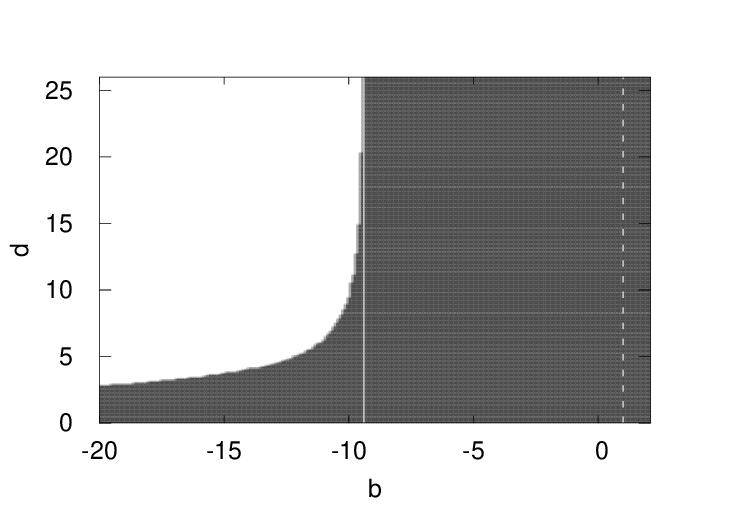}
  \end{center}
  \caption{{\bf Stability map of the linearized equation.} For each
    pair of values of the connectivity parameter and transmission
    delay $ (b,d) $, this color map shows the stability (dark) or
    instability (white) for the solutions of the linearized equation
    \eqref{eq:linearized-pde}. The vertical black solid line placed at
    $ b_p \approx-9.4 $ separates the region of unconditional
    stability (for any delay $d$), from that where stability depends
    on the value of $d$. The vertical dashed line at $ b=1 $ separates
    the region with a unique steady state (left) from the one with two
    steady states (right). For cases with two steady states (that is,
    to the right of the vertical dashed line) we consider the lower
    one. The stability condition considered to make this map is based
    on Theorems \ref{thm:N-asymptotic-behavior-main} and
    \ref{thm:stability-main}: for each $ b $ we find its related
    equilibrium $ p_\infty $, choosing the lower one if there are two
    equilibria; we differentiate $ p_\infty $ and use it as initial
    condition for the linear system given by equation
    \eqref{eq:NNLIF-PDE}, thus obtaining
    $q(v,t) = e^{tL_\infty}\p_v p_\infty$ and $N_q(t)$. Cases 1 and 2
    of Theorem \ref{thm:stability-main} account for all cases with
    $b > b_p$. In order to tell apart the cases with $b < b_p$, for
    each $ d>0 $ we compute
    $ Q(k):=-b e^{-ikd}\int_{0}^{\infty}N_q(t)e^{ikt} \d t$, with
    $ k\in\R $; finally we count the number of times that $ Q(k) $
    crosses the line $ (1,\infty) $ taking into account its
    orientation. If the net balance of crossings is different from
    $ 0 $, the equilibrium $ p_\infty $ related to $ (b,d) $ is
    unstable (see the proof of Theorem \ref{thm:stability-main} in
    Section \ref{sec:linearized-sg} for a better understanding of
    this). Otherwise we need to compute the zeros of function
    $ \Phi_d(\xi) := 1+b\hat{N_q}(\xi) \exp(-\xi d) $, given in
    Theorem \ref{thm:N-asymptotic-behavior-main} in order to check
    whether the equilibrium is stable or not. Reset and firing potential are set to $ V_R=1 $ and $ V_F=2 $ respectively.}
  \label{fig:map}
\end{figure}
Though it is not contained in Theorem \ref{thm:stability-main}, we
point out that the expected behavior of the nonlinear system
\eqref{eq:NNLIF} when $(b,d)$ is in the unstable region with $b < 0$
should be periodic, as shown consistently in simulations
\cite{ikeda2022theoretical,pseudo-equilibria}. That is, we expect
solutions to converge to a unique (up to time translations) periodic
solution with period approximately equal to $2d$. Since the analysis
leading to Theorem \ref{thm:stability-main} is based on the linearized
equation, we are not able to show a global behavior such as periodic
solutions, but the mechanism for their appearance seems to be given by
the presence of delayed negative feedback: in the literature on delay
equations there are many instances of periodic behavior, arising due
to a delayed system in which the driving force works against the
displacement that the system has seen in the previous delay
period. See for example the books \cite{Diekmann1995} (especially
chapter XV) or \cite{Erneux2009} for an exposition of this topic.

\

Before concluding this introduction it is worth mentioning that other
PDE families model the activity of a large number of neurons based on
the integrate and fire approach: population density models of
integrate and fire neurons with jumps \cite{omurtag, henry13, henry12,
  dumont2020mean}; Fokker-Planck equations for uncoupled neurons
\cite{NKKRC10,NKKZRC10}; and Fokker-Planck equations including
conductance variables \cite{Caikinetic,CCTa, perthame2013voltage,
  perthame2019derivation} There are also closely related models, not
strictly of NNLIF type, such as time elapsed models
\cite{PPD,PPD2,pakdaman2014adaptation} which are derived as mean-field
limits of Hawkes processes
\cite{chevallier2015microscopic,chevallier2015mean}; McKean-Vlasov
equations \cite{acebron2004noisy, mischler2016kinetic}, which are the
mean-field equations associated with a large number of Fitzhugh-Nagumo
neurons \cite{fitzhugh1961impulses}; Nonlinear Fokker-Planck systems
which describe networks of grid cells
\cite{carrillo2023noise,carrillo2023well} and are rigorously achieved
by mean-field limits \cite{carrillo2023mean}.

\

The paper is structured as follows: in Section \ref{sec:linear} we
carry out our study of the linear equation, giving well-posedness and
regularization estimates in the space $X$, and showing that the linear
PDE has a spectral gap in the space $X$. In Section \ref{sec:volterra}
we lay out some groundwork on classical delay equations, needed for
the arguments presented in later sections. In Section \ref{sec:nonlinear} we
give a result on the nonlinear stability of the NNLIF model for weak
interconnectivity, and we show that linearized stability implies
nonlinear stability (that is, that linearized behavior dominates close
to an equilibrium). Finally, in Section \ref{sec:linearized-sg} we give a
new criterion for linear stability or instability of equilibria with a
given connectivity $b$ and delay $d$, which implies corresponding
results for the nonlinear equation, due to the results in Section
\ref{sec:nonlinear}.

Throughout the paper we often use $C$ to denote an arbitrary
nonnegative constant which may change from line to line. We also use
the symbol $\lesssim$ to denote inequality up to a positive
multiplicative constant.

\section{Study of the linear equation}
\label{sec:linear}

In this section we study several aspects of the following linear equation, which
results when we fix the firing rate $N$, independent on time, in the nonlinear term of
equation \eqref{eq:NNLIF}:
\begin{subequations}
  \label{eq:linear-PDE-all}
  \begin{equation}
    \label{eq:linear-PDE}
    \p_t u(v,t) =  \p_v^2 u(v,t) + \p_v [(v - bN) u(v,t)] + \delta_{V_R} N_u(t),
    \
    v\in\left(-\infty,V_F\right], \ t\ge0,
  \end{equation}
  with the boundary condition
  \begin{equation}
    \label{eq:linear-PDE-bc}
    u(V_F, t) = 0,
    t \geq 0
  \end{equation}
  and defining
  \begin{equation}
    \label{eq:linear-PDE-N}
    N_u(t) := -\p_vu(V_F,t).
  \end{equation}
  As usual, we also impose an initial condition at time $t=0$:
  \begin{equation}
    \label{eq:linear-PDE-initial}
    u(v,0) = u_0(v),
    \qquad v \in (-\infty, V_F].
  \end{equation}
\end{subequations}
We will first study well-posedness bounds in several spaces, and then
study its spectral gap properties in suitable ``strong'' spaces which
will later be useful for the nonlinear theory.

\subsection{Summary of results: well-posedness, regularity and
  spectral gap}
\label{sec:wellposed}

Let us state our main results on the linear equation, leaving the
proof of all technical estimates for Section \ref{sec:regularity}. We
recall that an existence theory for classical solutions to the
nonlinear system \eqref{eq:NNLIF} was given in
\cite{carrillo2013classical}, which covers in particular the linear
case \eqref{eq:linear-PDE} (as the case with $b=0$ in their paper).
Throughout this section we assume this existence theory, and we
develop some more precise well-posedness estimates on these classical
solutions. Actually, from our estimates below one could construct a
linear existence theory for equation \eqref{eq:linear-PDE} by using
known tools from semigroup theory (see \cite{Engel1999}, in particular
Section III.3 and the Desch-Schappacher theorem). We do not follow
this path here since we think it is more direct to work with the
already studied classical solutions, developing new estimates for
them.

We also know from previous results \cite{caceres2011analysis} that
equation \eqref{eq:linear-PDE-all}, in the linear case, has a unique
probability equilibrium $p_\infty^N$ (given by
\eqref{eq:stationary-state-linear}), which in this section we denote
simply by $p_\infty$, since we work only with the linear equation in
all of Section \ref{sec:linear}. This equilibrium is positive, and the
linear equation has a spectral gap in the space $L^2(p_\infty^{-1})$:
there exists $\lambda > 0$ such that for all probability distributions
$p_0 \in L^2(p_\infty^{-1})$,
\begin{equation}
  \label{eq:gap-L2}
  \| e^{t L} p_0 - p_\infty \|_{L^2(p_\infty^{-1})}
  \leq e^{-\lambda t} \| p_0 - p_\infty \|_{L^2(p_\infty^{-1})}
  \qquad \text{for all $t \geq 0$.}
\end{equation}
In fact, the same reasoning from \cite{caceres2011analysis}, based on
a Poincaré's inequality, actually shows something slightly stronger: for
any initial condition $u_0 \in L^2(p_\infty^{-1})$ such that
$\int_{-\infty}^{V_F} u_0(v) \d v = 0$ we have
\begin{equation}
  \label{eq:gap-L2-int0}
  \| e^{t L} u_0 \|_{L^2(p_\infty^{-1})}
  \leq e^{-\lambda t} \| u_0 \|_{L^2(p_\infty^{-1})}
  \qquad \text{for all $t \geq 0$.}
\end{equation}
Another straightforward consequence is that there exists $C \geq 1$
such that for any $u_0 \in L^2(p_\infty^{-1})$ (not necessarily with
integral $0$) we have
\begin{equation}
  \label{eq:wellposed-L2-entropy}
  \| e^{t L} u_0 \|_{L^2(p_\infty^{-1})}
  \leq C \| u_0 \|_{L^2(p_\infty^{-1})}
  \qquad \text{for all $t \geq 0$.}
\end{equation}
Due to \eqref{eq:gap-L2-int0}, this is clearly true (with $C = 1$) if
$u_0$ has integral $0$. For a general $u_0 \in L^2(p_\infty^{-1})$,
\eqref{eq:wellposed-L2-entropy} may be obtained by writing
$u_0 = (u_0 - m p_\infty) + m p_\infty$, with
$m := \int_{-\infty}^{V_F} u_0(v) \d v$, applying
\eqref{eq:gap-L2-int0}, and noticing that
$|m| \leq C \|u_0\|_{L^2(p_\infty^{-1})}$ for some $C > 0$:
\begin{multline*}
  \| e^{t L} u_0 \|_{L^2(p_\infty^{-1})}
  \leq
  \| e^{t L} (u_0 - m p_\infty) \|_{L^2(p_\infty^{-1})}
  +
  |m| \| e^{t L} p_\infty \|_{L^2(p_\infty^{-1})}
  \\
  \leq
  e^{-\lambda t} \| u_0 - m p_\infty \|_{L^2(p_\infty^{-1})}
  + |m| \| p_\infty \|_{L^2(p_\infty^{-1})}
  \\
  \leq
  \| u_0 \|_{L^2(p_\infty^{-1})} + |m| \| p_\infty \|_{L^2(p_\infty^{-1})}
  + |m| \| p_\infty \|_{L^2(p_\infty^{-1})}
  \\
  \leq (1 + 2 C \| p_\infty \|_{L^2(p_\infty^{-1})}) \| u_0 \|_{L^2(p_\infty^{-1})}.
\end{multline*}
Our main aim is to have good estimates, similar to
\eqref{eq:gap-L2-int0} and \eqref{eq:wellposed-L2-entropy}, in a
linear space where the derivative of the solution is bounded (unlike
$L^2(p_\infty^{-1})$). We consider the space $X$ defined in
\eqref{eq:space-X} (associated to the unique probability equilibrium
$p_\infty$ of \eqref{eq:linear-PDE-all}). Here are the main estimates
we wish to prove on this:

\begin{lem}[Well-posedness estimate]
  \label{lem:wp}
  Let $u_0 \in X$ be any initial condition. Then the solution $u$ to
  the linear equation \eqref{eq:linear-PDE-all} satisfies the
  following: there are constants $M, C > 0$ such that
  \begin{gather}
    \label{eq:wp}
    \| e^{t L} u_0\|_X \leq M e^{C t} \|u_0\|_X
    \qquad \text{for all $t\ge0$.}
  \end{gather}
\end{lem}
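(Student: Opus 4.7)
The plan is to derive a coupled system of four differential inequalities, one for each quantity appearing in $\|u\|_X = \|u\|_\infty + \|\partial_v u\|_\infty + \|u\|_{L^2(\varphi)} + \|\partial_v u\|_{L^2(\varphi)}$, and close the estimate via Gronwall's lemma. Throughout, the useful preliminary fact is that $\partial_v u$ has a jump at $V_R$ of size $\partial_v u(V_R^+) - \partial_v u(V_R^-) = -N_u(t)$, obtained by integrating the equation across $V_R$.

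For the weighted $L^2$ norms, I would test the equation against $2u\varphi$ and integrate on $(-\infty, V_F]$. The algebraic identity $\varphi'(v) = (v-bN)\varphi(v)$ produces substantial cancellation between the drift term and the weight derivative. Splitting the integral at $V_R$, integrating by parts on each side, and using $u(V_F) = 0$ yields, after routine manipulation, an identity of the shape
\begin{equation*}
\ddt \|u\|_{L^2(\varphi)}^2 = -2\|\partial_v u\|_{L^2(\varphi)}^2 + 2\|u\|_{L^2(\varphi)}^2 + 2\, u(V_R,t)\,\varphi(V_R)\, N_u(t),
\end{equation*}
in which the boundary contribution at $V_R$ is bounded by $2\varphi(V_R)\|u\|_\infty \|\partial_v u\|_\infty$. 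Applying the same strategy to $w := \partial_v u$, which satisfies $\partial_t w = \partial_v^2 w + \partial_v[(v-bN)w] + w + N_u(t)\delta_{V_R}'$, gives an analogous identity for $\|w\|_{L^2(\varphi)}^2$; the extra singular source $\delta_{V_R}'$ and the jump of $w$ at $V_R$ require more careful bookkeeping on each side of $V_R$, but the resulting error terms are again bounded by the $L^\infty$ norms of $u$ and $\partial_v u$.

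For the two $L^\infty$ norms, I would use the Duhamel representation, splitting the generator as $L = L_0 + \delta_{V_R} N_\cdot$, where $L_0 u := \partial_v^2 u + \partial_v[(v-bN)u]$ is an Ornstein-Uhlenbeck-type operator with Dirichlet boundary at $V_F$. Writing
\begin{equation*}
u(v,t) = (e^{tL_0} u_0)(v) + \int_0^t G(v, V_R, t-s)\, N_u(s) \,ds
\end{equation*}
with $G$ the Dirichlet Green's function of $L_0$, standard pointwise Gaussian-type bounds on $G$ and $\partial_v G$ control $\|u(\cdot,t)\|_\infty$ and $\|\partial_v u(\cdot,t)\|_\infty$ on each side of $V_R$ in terms of $\|u_0\|_X$ and $\sup_{s \le t}|N_u(s)|$; since $|N_u(s)| \le \|\partial_v u(\cdot, s)\|_\infty$, this closes back on $\|\partial_v u\|_\infty$ itself. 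The four inequalities then form a linear system in the vector of squared norms, and Gronwall yields $\|u(t)\|_X \le M e^{Ct}\|u_0\|_X$.

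The main obstacle I anticipate is the $L^\infty$ bound on $\partial_v u$ near $V_R$: the jump of $\partial_v u$ and the derivative-of-delta source $\delta_{V_R}'$ break naive energy/Moser-type bootstrapping arguments, and one seems forced to use explicit kernel estimates for the Dirichlet OU semigroup together with separate analysis on $(-\infty, V_R)$ and $(V_R, V_F)$. An alternative, as the paper itself suggests, would be to invoke the Desch--Schappacher perturbation theorem from semigroup theory, which yields generation and the exponential well-posedness bound abstractly at the cost of more machinery.
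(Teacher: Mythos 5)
Your route is genuinely different from the paper's: the paper handles \emph{all four} components of $\|\cdot\|_X$ via Duhamel formulas built on the explicitly-known Dirichlet Fokker--Planck kernel (obtained by the method of images after decomposing the drift as $-(v-bN)=-(v-V_F)-(V_F-bN)$, so that the reflection across $V_F$ works exactly), together with a short-time Gronwall lemma; you instead try weighted-$L^2$ energy identities for the $L^2(\varphi)$ parts and use Duhamel/kernel bounds only for the $L^\infty$ parts. The energy identity you state for $\|u\|_{L^2(\varphi)}^2$ is correct, and its boundary term $2u(V_R)\varphi(V_R)N_u$ is indeed controlled by $\|u\|_\infty\|\p_v u\|_\infty$.

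However, there is a genuine gap in the energy estimate for $w := \p_v u$. Integrating $\int w\varphi\,\p_v^2 w$ by parts separately on $(-\infty,V_R)$ and $(V_R,V_F)$ produces the boundary contribution
\begin{equation*}
\varphi(V_R)\bigl[w(V_R^-)\,w'(V_R^-) - w(V_R^+)\,w'(V_R^+)\bigr] + \varphi(V_F)\,w(V_F)\,w'(V_F),
\end{equation*}
where $w' = \p_v^2 u$. The term at $V_F$ is harmless: since $u(V_F,t)\equiv 0$, the equation forces $\p_v^2 u(V_F) = (V_F - bN)N_u$, so $w(V_F)w'(V_F) = -(V_F-bN)N_u^2$ is controlled by $\|\p_v u\|_\infty^2$. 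But at $V_R$ the second derivative $\p_v^2 u(V_R^\pm)$ is \emph{not} controlled by the $X$ norm: from the PDE it equals $\p_t u(V_R) - u(V_R) - (V_R - bN)\p_v u(V_R^\pm)$, and $\p_t u(V_R)$ is precisely the quantity whose a priori boundedness one is trying to establish. So the claim that ``the resulting error terms are again bounded by the $L^\infty$ norms of $u$ and $\partial_v u$'' is false, and the coupled Gronwall system does not close in $\|\cdot\|_X$. This is exactly why the paper avoids energy estimates here and controls $\|\p_v e^{tA}u_0\|_{L^2(\varphi)}$ via Duhamel and explicit kernel bounds (its Lemmas \ref{lem:FP-Gaussian-L2-estimates} and \ref{lem:FP-delta-Gaussian-L2-estimates}).

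A secondary remark on the $L^\infty$ part: you split $L = L_0 + \delta_{V_R}N_\cdot$ with $L_0$ the Dirichlet OU operator with drift $v-bN$, and appeal to ``standard Gaussian-type bounds'' for its Green's function $G$. For $bN\neq V_F$, $G$ is not given by a reflection formula, and you would have to invoke less explicit parabolic Gaussian bounds; the paper's extra split $L_0 = F + L_2$ makes the kernel explicit, which is what its quantitative short-time bounds ultimately rest on. This part of your sketch is repairable; the $\p_v u$ energy estimate is the real obstacle.
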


And here is our main regularization estimate:

\begin{lem}[Regularization estimate]
  \label{lem:regularity}
  Let $u_0 \in X$ be any initial condition. Then the solution $u$ to
  the linear equation \eqref{eq:linear-PDE-all} satisfies the
  following: there exists a constant $C > 0$ and $t_0> 0$ (both
  independent of the initial condition $u_0$), such that
  \begin{equation}
    \label{eq:regularity1}
    \| e^{t L} u_0 \|_{X} \leq C t^{-3/4} \|u_0 \|_{L^2(\varphi)}
    \qquad \text{for all $0 < t \leq t_0$.}
  \end{equation}
  and in particular
  \begin{equation}
    \label{eq:regularity}
    \| e^{t L} u_0 \|_{X} \leq C t^{-3/4} \|u_0 \|_{L^2(p_\infty^{-1})}
    \qquad \text{for all $0 < t \leq t_0$.}
  \end{equation}
\end{lem}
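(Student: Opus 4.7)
The plan is to obtain \eqref{eq:regularity1} by parabolic smoothing, converting $L^2(\varphi)$ information about $u_0$ into $L^\infty$ control of $u(\cdot,t)$ and $\p_v u(\cdot,t)$ for small $t > 0$. The exponent $3/4$ will come from having to climb up two derivatives from $L^2(\varphi)$ to $L^\infty$: one half-derivative to control $\p_v u$ in $L^2$ (cost $t^{-1/2}$), and then a Gagliardo–Nirenberg interpolation to reach $L^\infty$ for $\p_v u$ (cost $t^{-1/4}$). The exponent for $\|u\|_\infty$ will be the milder $t^{-1/4}$ and is therefore absorbed.

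\textbf{Step 1 (energy identity in $L^2(\varphi)$).} I would multiply \eqref{eq:linear-PDE} by $u\varphi$ and integrate on $(-\infty, V_F]$, using $\p_v\varphi=(v-bN)\varphi$ and $u(V_F,t)=0$ to integrate by parts, splitting the $v$-integral at $V_R$ to account for the Dirac contribution. After the usual cancellations between the drift and the weight gradient, the resulting identity takes the schematic form
\begin{equation*}
  \tfrac12\ddt\|u\|_{L^2(\varphi)}^2 + \|\p_v u\|_{L^2(\varphi)}^2
  \;\leq\; C\|u\|_{L^2(\varphi)}^2 + N_u(t)\,u(V_R,t)\,\varphi(V_R).
\end{equation*}
The boundary–source contribution $N_u u(V_R)$ is handled by a Gagliardo–Nirenberg estimate on the right subinterval $(V_R,V_F)$: since $u(V_F)=0$, one has $|N_u(t)|^2\lesssim \|\p_v u\|_{L^2}\|\p_v^2 u\|_{L^2}+\|\p_v u\|_{L^2}^2$, and $|u(V_R)|^2\lesssim \|u\|_{L^2}\|\p_v u\|_{L^2}$, so Young's inequality lets us absorb part of this into the dissipation while the remainder enters a Gronwall bound on $\|u\|_{L^2(\varphi)}^2+\varepsilon\|\p_v u\|_{L^2(\varphi)}^2$. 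Integrating in time from $0$ to $t\leq t_0$ and using that $\|\p_v u\|_{L^2(\varphi)}^2$ is integrable in time then produces, by a standard short-time smoothing argument (picking a reference time $s\in(t/2,t)$ where $\|\p_v u(\cdot,s)\|_{L^2(\varphi)}^2\lesssim t^{-1}\|u_0\|_{L^2(\varphi)}^2$),
\begin{equation*}
  \|\p_v u(\cdot,t)\|_{L^2(\varphi)}\;\lesssim\; t^{-1/2}\,\|u_0\|_{L^2(\varphi)},
  \qquad 0 < t \leq t_0.
\end{equation*}

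\textbf{Step 2 (one more derivative).} I would then repeat the energy argument on the differentiated equation, working separately on $(-\infty,V_R)$ and $(V_R,V_F)$ so that the Dirac source becomes a matching/jump condition on $\p_v u$. Multiplying by $\p_v u\cdot \varphi$ on each subinterval, integrating by parts, and controlling the trace terms at $V_R$ and $V_F$ as before, I expect to obtain an estimate of the form $\|\p_v^2 u(\cdot,t)\|_{L^2(\varphi)}\lesssim t^{-1}\|u_0\|_{L^2(\varphi)}$ by the same short-time smoothing trick starting from Step~1.

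\textbf{Step 3 (Gagliardo–Nirenberg and conclusion).} Since $\varphi\geq 1$, the weighted $L^2(\varphi)$ norm dominates the unweighted $L^2$ norm on $(-\infty,V_F]$. For any $w\in H^1$ with $w(V_F)=0$ the 1D interpolation $\|w\|_\infty^2\leq 2\|w\|_{L^2}\|\p_v w\|_{L^2}$ follows from the fundamental theorem of calculus. Applying it to $u(\cdot,t)$ gives $\|u(\cdot,t)\|_\infty\lesssim t^{-1/4}\|u_0\|_{L^2(\varphi)}$. Applying the same inequality on $(-\infty,V_R)$ and $(V_R,V_F)$ to $\p_v u$ (using the boundary values on each subinterval, which are controlled by the preceding steps) gives
\begin{equation*}
  \|\p_v u(\cdot,t)\|_\infty^2
  \;\lesssim\; \|\p_v u\|_{L^2(\varphi)}\,\|\p_v^2 u\|_{L^2(\varphi)}
  \;\lesssim\; t^{-1/2}\cdot t^{-1}\,\|u_0\|_{L^2(\varphi)}^2,
\end{equation*}
that is, $\|\p_v u(\cdot,t)\|_\infty\lesssim t^{-3/4}\|u_0\|_{L^2(\varphi)}$. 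Combining all four pieces of $\|\cdot\|_X$ yields \eqref{eq:regularity1}; the second inequality \eqref{eq:regularity} follows from the embedding $L^2(p_\infty^{-1})\hookrightarrow L^2(\varphi)$, which was already established in \eqref{eq:4}–\eqref{eq:11}.

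\textbf{Main obstacle.} The delicate part is closing the energy estimates in the presence of the Dirac source: the product $N_u(t)\,u(V_R,t)$ mixes a boundary value of $\p_v u$ at $V_F$ with a pointwise value of $u$ at $V_R$, so neither factor can be bounded by the $L^2(\varphi)$ dissipation alone. The plan is to trade this term against $\|\p_v^2 u\|_{L^2}$ via trace and Gagliardo–Nirenberg inequalities, which is why the hierarchy has to be closed at the level of two spatial derivatives before returning to $L^\infty$ bounds; if this absorption proves too delicate a Duhamel decomposition isolating the delta source is an alternative route.
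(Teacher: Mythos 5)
Your route is genuinely different from the paper's. The paper never performs an energy estimate on the weighted PDE; instead it splits $L = F + L_1 + L_2$, writes the solution by Duhamel against the \emph{explicit} Fokker--Planck kernel on the half-line (obtained by reflection about $V_F$, see \eqref{eq:FP-sol-2}--\eqref{eq:Psi-kernel}), and reads off $\|\p_v e^{tL}u_0\|_\infty \lesssim t^{-3/4}\|u_0\|_2$ and $\|\p_v e^{tL}u_0\|_{L^2(\varphi)}\lesssim t^{-1/2}\|u_0\|_{L^2(\varphi)}$ directly from kernel bounds on $\p_v\Phi_t$, using a mild-solution fixed point to control $N_u(t)$ (see Lemmas \ref{lem:FP-bounds}, \ref{lem:F-L1}, \ref{lem:bounds-complete}, \ref{lem:LIF-Gaussian-L2-estimates}). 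The crucial advantage of the kernel approach is that the $L^\infty$ bound on $\p_v u$ is obtained without ever touching $\p_v^2 u$. Your plan is forced to reach second derivatives because you obtain $\|\p_vu\|_\infty$ by Gagliardo--Nirenberg interpolation from $\|\p_vu\|_2$ and $\|\p_v^2u\|_2$, which means your closure lives one order higher than the $X$-norm you are trying to control. That said, your Step~1 identity is in fact exactly right: multiplying by $u\varphi$ with $\p_v\varphi=(v-bN)\varphi$ and integrating by parts twice yields $\tfrac12\ddt\|u\|_{L^2(\varphi)}^2 = -\|\p_vu\|_{L^2(\varphi)}^2 + \|u\|_{L^2(\varphi)}^2 + N_u(t)\,u(V_R,t)\,\varphi(V_R)$, so the cancellation you call "the usual cancellations" does close cleanly at first order.

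The genuine gap is Step~2. Because the PDE forces $\p_vu$ to have a jump of size $-N_u(t)$ across $v=V_R$, the differentiated equation on $(-\infty,V_R)\cup(V_R,V_F)$ carries boundary terms at $V_R^{\pm}$ and $V_F$ involving $\p_v^2u$ itself; for instance from \eqref{eq:linear-PDE} and $u(V_F,t)\equiv 0$ one has $\p_v^2u(V_F,t)=(V_F-bN)N_u(t)$, and similar coupling occurs at $V_R^\pm$ through the jump. So the second-order energy estimate is not a repetition of Step~1 with a harmless extra derivative: it produces a system of trace terms of the same order as the dissipation, and the absorption must be closed jointly with Step~1 (hence the coupled quantity $\|u\|_{L^2(\varphi)}^2+\varepsilon\|\p_vu\|_{L^2(\varphi)}^2$ you mention). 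You flag this yourself as "the delicate part", but you do not resolve it, and without resolving it the mean-value/propagation step that turns $\int_0^t\|\p_vu\|_{L^2(\varphi)}^2\d s\lesssim\|u_0\|_{L^2(\varphi)}^2$ into a pointwise-in-time $t^{-1/2}$ bound is also incomplete (propagation requires exactly the well-posedness estimate on $\|\p_vu\|_{L^2(\varphi)}$ you are trying to establish). In short: your first-order energy estimate is correct as stated, your final interpolation and the embedding $L^2(p_\infty^{-1})\hookrightarrow L^2(\varphi)$ are fine, but the second-order estimate you need to interpolate from is a substantive missing piece. The paper avoids this entirely with the explicit kernel, which is worth recording as the simpler route for this particular operator.
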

Both of the lemmas above are a direct consequence of the bounds in
Lemmas \ref{lem:bounds-complete} and
\ref{lem:LIF-Gaussian-L2-estimates}, proved at the end of Section
\ref{sec:regularity}. Notice that
\begin{equation*}
  \varphi(v) \lesssim \frac{1}{p_\infty(v)}
  \qquad \text{for all $v < V_F$},
\end{equation*}
so \eqref{eq:regularity} is clearly a consequence of
\eqref{eq:regularity1}.

Regarding spectral gap estimates, we show that the semigroup
associated to the linear equation has a spectral gap in the space
$X$. This is essentially deduced from the known spectral gap estimate
\eqref{eq:gap-L2-int0} and Lemmas \ref{lem:wp} and
\ref{lem:regularity}:

\begin{prp}[Spectral gap estimate]
  \label{prp:gap-X}
  There exist constants $\lambda > 0$, $C \geq 1$ such that for all
  initial conditions $u_0 \in X$ such that
  $\int_{-\infty}^{V_F} u_0(v) \d v = 0$ it holds that
  \begin{equation}
    \label{eq:gap-X}
    \| e^{t L} u_0 \|_{X}
    \leq Ce^{-\lambda t} \| u_0 \|_{X}
    \qquad \text{for all $t \geq 0$}
  \end{equation}
  and also
  \begin{equation}
    \label{eq:gap-X-regularized}
    \| e^{t L} u_0 \|_{X}
    \leq C t^{-\frac34} e^{-\lambda t} \| u_0 \|_{L^2(\varphi)}
    \qquad \text{for all $t > 0$.}
  \end{equation}
\end{prp}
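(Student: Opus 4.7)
The plan is to apply the \emph{shrinkage of the spectral gap} technique (in the spirit of \cite{Canizo2010a,Gualdani2018,Mischler2016} mentioned in the introduction), combining the already known gap \eqref{eq:gap-L2-int0} in the large space $L^2(p_\infty^{-1})$ with the regularization of Lemma~\ref{lem:regularity} and the well-posedness of Lemma~\ref{lem:wp}. The chain of continuous embeddings I will use is $X \hookrightarrow L^2(p_\infty^{-1}) \hookrightarrow L^2(\varphi)$: the first is shown in the introduction via \eqref{eq:4}--\eqref{eq:11}, and the second follows from $\varphi(v) \lesssim 1/p_\infty(v)$. These inclusions allow me to feed an $X$-input into the $L^2$-gap and, afterwards, push the resulting $L^2$-output back up into $X$ via the regularizing effect of the semigroup.

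For \eqref{eq:gap-X}, I would let $t_0$ be as in Lemma~\ref{lem:regularity}. A short integration by parts in $v$ over $(-\infty,V_F]$, using the boundary condition $u(V_F,\cdot)=0$ and the reset source $\delta_{V_R}N_u$, first shows that the outgoing flux at $V_F$ exactly compensates the incoming mass at $V_R$, so $\p_t\int u(\cdot,t)\dv = 0$; hence the mean-zero condition on $u_0$ is preserved by $e^{tL}$. For $t\ge t_0$ I split
\begin{equation*}
  e^{tL}u_0 \;=\; e^{t_0 L}\bigl(e^{(t-t_0)L}u_0\bigr),
\end{equation*}
apply \eqref{eq:gap-L2-int0} to the inner factor, and use \eqref{eq:regularity} to return into $X$ at a cost of $Ct_0^{-3/4}$; together with the embedding $\|u_0\|_{L^2(p_\infty^{-1})}\lesssim\|u_0\|_X$ this gives $\|e^{tL}u_0\|_X \lesssim e^{-\lambda(t-t_0)}\|u_0\|_X$. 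For $0\le t\le t_0$ the well-posedness bound of Lemma~\ref{lem:wp} yields $\|e^{tL}u_0\|_X \leq M e^{Ct_0}\|u_0\|_X$; matching the two regimes and absorbing $e^{\lambda t_0}$ into the constant would give \eqref{eq:gap-X}.

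For the regularized estimate \eqref{eq:gap-X-regularized} the idea is the same, with one extra split. For $0<t\le t_0$, Lemma~\ref{lem:regularity} already provides the desired bound (the factor $e^{-\lambda t}$ is trivially reinserted, since it is bounded below on $[0,t_0]$). For $t\ge t_0$, I would write $e^{tL}u_0 = e^{(t-t_0/2)L}\bigl(e^{(t_0/2)L}u_0\bigr)$, use Lemma~\ref{lem:regularity} on the first half (landing in $X$ with a constant $C(t_0/2)^{-3/4}\|u_0\|_{L^2(\varphi)}$), and then apply \eqref{eq:gap-X}, just proved, to the remaining time $t-t_0/2$ to gain the factor $e^{-\lambda(t-t_0/2)}$; the mean-zero assumption on $u_0$ is again preserved throughout. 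The polynomial prefactor $t^{-3/4}$ is recovered from the trivial inequality $e^{-\lambda t/2}\lesssim t^{-3/4}$ on $[t_0,\infty)$, at the cost of halving the exponential rate.

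The only real obstacle I anticipate is bookkeeping: one needs to be careful that $e^{tL}$ acting on $X$ truly preserves the mean-zero condition (so that \eqref{eq:gap-L2-int0} can be invoked), and that all constants and the polynomial prefactor are tracked cleanly through the three a priori estimates \eqref{eq:gap-L2-int0}, \eqref{eq:wp} and \eqref{eq:regularity1}. No further analytical input beyond those three ingredients and the embeddings between $X$, $L^2(p_\infty^{-1})$ and $L^2(\varphi)$ should be required.
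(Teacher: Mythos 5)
Your proof is essentially the same as the paper's: both split $e^{tL}u_0 = e^{t_0L}\bigl(e^{(t-t_0)L}u_0\bigr)$ for $t\ge t_0$, feed the $L^2(p_\infty^{-1})$ spectral gap \eqref{eq:gap-L2-int0} into the regularization of Lemma~\ref{lem:regularity} and the embedding $X\hookrightarrow L^2(p_\infty^{-1})$, use Lemma~\ref{lem:wp} for short times, and then iterate the same composition (regularize first, then apply the newly-proved $X$-gap) to get \eqref{eq:gap-X-regularized}, accepting a slightly smaller rate $\lambda/2$ to absorb the polynomial prefactor — exactly what the paper notes in the remark following the proposition. The only cosmetic divergence is your explicit verification of mass conservation (which the paper takes for granted) and your handling of $t\le t_0$ in the regularized estimate directly from Lemma~\ref{lem:regularity}, whereas the paper reuses the split with $s=t/2$; both are fine.
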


\begin{rem}
  From the proof below one can see that the constant $\lambda$ in
  \eqref{eq:gap-X} can be taken to be the same one as in
  \eqref{eq:gap-L2} (usually obtained via a Poincaré's inequality
  \cite{caceres2011analysis}). The constant $\lambda$ in
  \eqref{eq:gap-X-regularized} can be taken to be any constant
  strictly smaller than the $\lambda$ in \eqref{eq:gap-L2} (or
  alternatively, one could keep the same $\lambda$ and write $\max\{1,
  t^{-3/4}\}$ instead of $t^{-3/4}$ in \eqref{eq:gap-X-regularized}).
\end{rem}

\begin{proof}
  Call $Y \equiv L^2(p_\infty^{-1})$. Our regularization result
  in equation \eqref{eq:regularity} gives
  \begin{equation*}
    \| e^{tL} u_0 \|_{X} \lesssim t^{-\frac34} \|u_0\|_Y
    \qquad \text{for all $0 < t \leq t_0$.}
  \end{equation*}
  Now we can prove \eqref{eq:gap-X} for $t \geq t_0$ by using
  \eqref{eq:gap-L2-int0}: for $t \geq t_0$ we have
  \begin{multline*}
    \| e^{tL} u_0 \|_X = \| e^{t_0 L} e^{(t-t_0) L} u_0 \|_X
    \lesssim
    t_0^{-\frac34} \| e^{(t-t_0) L} u_0 \|_Y
    \\
    \lesssim
    t_0^{-\frac34} e^{-\lambda(t-t_0)} \| u_0 \|_Y
    \lesssim
    t_0^{-\frac34} e^{-\lambda(t-t_0)} \| u_0 \|_X
    \lesssim
    e^{-\lambda t} \| u_0 \|_X.
  \end{multline*}
  Notice that we also used $\|u_0\|_Y \lesssim \|u_0\|_X$, since
  $X \subseteq Y$ (see the observation after \eqref{eq:norm-X} in the
  introduction). For $t \leq t_0$ we can use the well-posedness
  estimate of Lemma \ref{lem:wp}, finally getting \eqref{eq:gap-X} for
  all $t \geq 0$.

  In order to show \eqref{eq:gap-X-regularized} we take any $t > 0$
  and any $0 < s < \min\{t_0, t\}$, and we use \eqref{eq:gap-X} and
  \eqref{eq:regularity1} to write
  \begin{equation*}
    \| e^{tL} u_0 \|_X = \| e^{(t-s) L} e^{s L} u_0 \|_X
    \lesssim
    e^{-\lambda(t-s)} \| e^{s L} u_0 \|_X
    \lesssim
    s^{-\frac34} e^{-\lambda(t-s)} \| u_0 \|_{L^2(\varphi)}.
  \end{equation*}
  For $t \geq t_0$ we take $s := t_0/2$ and this shows
  \eqref{eq:gap-X-regularized} (with a slightly smaller
  $\lambda$). For $t < t_0$ we take $s = t/2$, and this also gives the
  result on that range.  
\end{proof}

\subsection{Bounds for classical solutions}
\label{sec:regularity}

In order to study the linear equation \eqref{eq:linear-PDE} we
separate its terms as follows
\begin{equation}
  \label{eq:linear-PDE-2}
  \p_t u
  = \p_v^2 u + \p_v [(v - bN) u] + \delta_{V_R} N_u(t)
  = Fu  + L_1 u + L_2 u,
\end{equation}
with
\begin{equation*}
  F u := \p_v^2 u + \p_v [(v-V_F) u],
  \qquad
  L_1 u := \delta_{V_R} N_u(t),
  \qquad
  L_2 u := \p_v [(V_F- bN) u].
\end{equation*}
We still consider the same boundary conditions
\eqref{eq:linear-PDE-bc}.  In order, we prove estimates for the
equation $\p_t u = F u$ in Section \ref{sec:bounds-F-P}, then for
$\p_t u = F u + L_1 u$ in Section \ref{sec:plus_delta}, and finally
for the full linear equation in Section \ref{sec:complete}.

\subsubsection{Bounds for the Fokker-Planck equation}
\label{sec:bounds-F-P}

The Cauchy problem for the Dirichlet Fokker-Planck equation on the
half-line, posed for $t \geq 0$ and $v \in (-\infty,0]$,
\begin{equation}
  \label{eq:FP-0}
  \p_t u = \p_v^2 u + \p_v (v u)
  \qquad
  \text{with}
  \qquad
  u(0, t) = 0
  \quad \text{for $t \geq 0$},
\end{equation}
$$
 u(v,0)=u_0    \qquad
  \text{with}
  \qquad
u_0 \: (-\infty, 0] \to \R,
$$
has the explicit solution
\begin{equation}
  \label{eq:FP-sol}
  u(v, t)
  =
  \int_{-\infty}^{\infty} \widetilde{u}_0(w) \Phi_t(v, w) \d w
  =
  \int_{-\infty}^{0} u_0(w) \Psi_t(v,w) \d w,
\end{equation}
defined for $t > 0$ and $v \in (-\infty, 0]$. Here
$\widetilde{u}_0 \: \R \to \R$ is the anti-symmetrization of the
initial condition
\begin{equation*}
  \widetilde{u}_0(x) :=
  \begin{cases}
    u_0(x), \qquad & x\le0
    \\
    -u_0(-x), \qquad & x>0,
  \end{cases}
\end{equation*}
$\Phi_t = \Phi_t(v,w)$ is the fundamental solution of the standard
Fokker-Planck equation:
\begin{equation}\label{eq:fp-kernel}
  \Phi_{t}(v,w) =
  \frac{1}{\sqrt{2\pi (1-e^{-2t})}}
  \exp\left(
    {-\frac{( v-e^{-t}w )^2}{2\left(1-e^{-2t}\right)}}
  \right),
  \qquad t > 0, \quad v,w \in \R,
\end{equation}
and
\begin{equation}
  \label{eq:Psi-kernel}
  \Psi_t(v,w) := \Phi_t(v,w) - \Phi_t(v,-w),
  \qquad
  t > 0, \ v, w \in (-\infty, 0].
\end{equation}
For later reference, we notice that $\Psi_t(v,w)\ge 0$, since 
$t>0$ and $v, w \in (-\infty, 0]$, and 
\begin{equation}
  \label{eq:fp-kernel-derivative}
  \p_v \Phi_t(v,w) =
  (2\pi)^{-\frac12} (1- e^{-2t})^{-\frac32} (v-e^{-t} w)
  \exp\left(
    {-\frac{( v-e^{-t}w )^2}{2\left(1-e^{-2t}\right)}}
  \right).
\end{equation}
For an explicit solution to be available it is essential that the
boundary condition is set at $v=0$; otherwise we do not know of an
explicit expression. From the expression \eqref{eq:FP-sol} of the
solution, it is easy to see that many estimates for the standard
Fokker-Planck equation on $\R$ can be carried over to the
Fokker-Planck equation \eqref{eq:FP-0}, posed on $(-\infty,0]$ and
with a Dirichlet boundary condition.

It is then straightforward to translate equation \eqref{eq:FP-sol} and
obtain a solution to the translated Fokker-Planck equation
\begin{equation}
  \label{eq:FP-1}
  \p_t u
  = F u = \p_v^2 u + \p_v [(v - V_F) u]
\end{equation}
posed for $t \geq 0$ and $v \in (-\infty, V_F]$ with the boundary condition
\begin{equation}
  \label{eq:FP-1-bc}
  u(V_F, t) = 0,
  \qquad t \geq 0.
\end{equation}
A solution to \eqref{eq:FP-1}--\eqref{eq:FP-1-bc}, with
initial condition $u_0$, is then given by
\begin{equation}
  \label{eq:FP-sol-2}
  u(v, t)
  =
  \int_{-\infty}^{\infty} \widetilde{u}_0(w) \Phi_t(v-V_F, w-V_F) \d w
  =
  \int_{-\infty}^{V_F} u_0(w) \Psi_t(v-V_F,w-V_F) \d w,
\end{equation}
for $t > 0$ and $v \in (-\infty, V_F]$, where $\widetilde{u}_0$ is now
the function
\begin{equation*}
  \widetilde{u}_0(x) :=
  \begin{cases}
    u_0(x), \qquad & x \leq V_F
    \\
    -u_0(2V_F -x), \qquad & x > V_F.
  \end{cases}
\end{equation*}
With this explicit expression we can obtain the following estimates:

\begin{lem}
  \label{lem:FP-bounds}
  Denote by $e^{tF} u_0$ the solution \eqref{eq:FP-sol-2} to the
  Fokker-Planck equation \eqref{eq:FP-1}--\eqref{eq:FP-1-bc} in the
  domain $v\in (-\infty,V_F]$, $t \ge 0$, with an initial condition
  $u_0$. Then, the following well-posedness inequalities hold for all
  $u_0 \in X$ and all $t \geq 0$:
  \begin{align}
    \label{eq:bound-fp-infty-infty}
    \| e^{tF} u_0 \|_\infty &\leq e^t \|u_0\|_\infty.
    \\
    \label{eq:bound-fp-2-2}
    \| e^{tF} u_0 \|_2 &\leq \sqrt{2} e^{\frac{t}{2}} \|u_0\|_2.
    \\                     
    \label{eq:bound-fp-derivative-infty-infty}
    \| \p_ve^{tF} u_0 \|_\infty &\leq e^{2t}  \| \p_vu_0\|_\infty.
    \\
    \label{eq:bound-fp-derivative-2-derivative-2}
    \| \p_ve^{tF} u_0 \|_2 &\leq \sqrt{2} e^{\frac{3t}{2}} \|\p_vu_0\|_2.
  \end{align}
  Also, the following ``regularization'' inequalities hold for some
  constants $C > 0$, all $u_0 \in X$ and $t$ small enough:
  \begin{align}
     \label{eq:bound-fp-2-infty}
    \| e^{tF} u_0 \|_\infty &\leq C t^{-1/4} \|u_0\|_2.
    \\
    \label{eq:bound-fp-infty-infty-derivative}
    \| \p_ve^{tF} u_0 \|_\infty &\leq Ct^{-1/2} \|u_0\|_\infty.
    \\                               
    \label{eq:bound-fp-derivative-2-infty}
    \| \p_ve^{tF} u_0 \|_\infty &\leq C t^{-3/4} \|u_0\|_2.
    \\ 
    \label{eq:bound-fp-derivative-2-2}
    \| \p_ve^{tF} u_0 \|_2 &\leq C t^{-1/2} \|u_0\|_2.
    \\ 
    \label{eq:bound-fp-derivative-infty-2}
    \| \p_ve^{tF} u_0 \|_\infty &\leq Ct^{-1/4} \|\p_v u_0\|_2.
  \end{align}
\end{lem}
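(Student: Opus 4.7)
The plan is to derive all nine inequalities from the explicit representation \eqref{eq:FP-sol-2}, reducing everything to pointwise and integral estimates on the Ornstein--Uhlenbeck kernel $\Phi_t$ from \eqref{eq:fp-kernel} and its derivative \eqref{eq:fp-kernel-derivative}. The representation already encodes the method of images: the Dirichlet problem on $(-\infty,V_F]$ is obtained by restricting the solution on $\R$ whose initial datum is the anti-symmetrization $\widetilde{u}_0$. Since $u_0\in X$ satisfies $u_0(V_F)=0$, this extension is continuous on $\R$, and its weak derivative is bounded with $\|\widetilde{u}_0'\|_{L^\infty(\R)}=\|u_0'\|_{L^\infty}$ and $\|\widetilde{u}_0'\|_{L^2(\R)}^2 = 2\|u_0'\|_{L^2}^2$ (with analogous identities without derivatives). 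Thus every bound for $e^{tF}u_0$ reduces, up to a harmless $\sqrt{2}$, to the corresponding bound for the free Ornstein--Uhlenbeck semigroup on $\R$ applied to $\widetilde{u}_0$.

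The engine of the proof is then a short catalogue of Gaussian integrals. Straightforward changes of variable, such as $z=(v-e^{-t}w)/\sqrt{1-e^{-2t}}$, give
\begin{equation*}
\int_\R \Phi_t(v,w)\d w = e^t,\qquad
\int_\R \Phi_t(v,w)\d v = 1,\qquad
\|\Phi_t(v,\cdot)\|_{L^2(\R)}\lesssim (1-e^{-2t})^{-1/4},
\end{equation*}
and, using \eqref{eq:fp-kernel-derivative},
\begin{equation*}
\int_\R |\p_v\Phi_t(v,w)|\d w \lesssim \frac{e^t}{\sqrt{1-e^{-2t}}},\quad
\int_\R |\p_v\Phi_t(v,w)|\d v \lesssim \frac{1}{\sqrt{1-e^{-2t}}},\quad
\|\p_v\Phi_t(v,\cdot)\|_{L^2(\R)}\lesssim (1-e^{-2t})^{-3/4}.
\end{equation*}
Since $1-e^{-2t}\asymp t$ for small $t$ and is bounded above and below on compact intervals away from $0$, these estimates produce exactly the exponential prefactors needed for the well-posedness bounds and the $t^{-1/4}$, $t^{-1/2}$, $t^{-3/4}$ singularities needed for the regularization bounds.

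With these pieces in hand the nine inequalities split into three families. Bounds \eqref{eq:bound-fp-infty-infty} and \eqref{eq:bound-fp-2-2} follow from the first two identities combined with Schur's test, which gives $\|e^{tF}u_0\|_2\le\sqrt{e^t\cdot 1}\,\|\widetilde{u}_0\|_2=e^{t/2}\sqrt{2}\|u_0\|_2$. The derivative well-posedness bounds \eqref{eq:bound-fp-derivative-infty-infty} and \eqref{eq:bound-fp-derivative-2-derivative-2} exploit the identity $\p_v\Phi_t(v,w)=-e^t\,\p_w\Phi_t(v,w)$, immediate from \eqref{eq:fp-kernel-derivative}, to integrate by parts in $w$ and obtain
\begin{equation*}
\p_v(e^{tF}u_0)(v) = e^t\int_\R \widetilde{u}_0'(w)\,\Phi_t(v-V_F,w-V_F)\d w,
\end{equation*}
after which the first two estimates apply directly to $\widetilde{u}_0'$. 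The regularization bounds \eqref{eq:bound-fp-2-infty}, \eqref{eq:bound-fp-infty-infty-derivative}, \eqref{eq:bound-fp-derivative-2-infty} and \eqref{eq:bound-fp-derivative-infty-2} follow by Cauchy--Schwarz in $w$, pairing $\widetilde{u}_0$ or $\widetilde{u}_0'$ with $\Phi_t(v,\cdot)$ or $\p_v\Phi_t(v,\cdot)$ in $L^2$, while \eqref{eq:bound-fp-derivative-2-2} follows from Schur's test applied to the kernel $\p_v\Phi_t$ using both of its integral bounds above.

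The main obstacle is not conceptual but bookkeeping: controlling the constants through the Gaussian changes of variable so that the stated exponential prefactors and negative powers of $t$ come out cleanly, and checking that the integration-by-parts identity above is legitimate despite the possible jump of $\p_v u_0$ at $V_R$. This causes no trouble because $\widetilde{u}_0$ remains absolutely continuous on $\R$ with a bounded weak derivative, so the identity holds distributionally and the resulting integral is well defined; the $\sqrt{2}$ factors from anti-symmetrization then produce the precise constants appearing in \eqref{eq:bound-fp-2-2} and \eqref{eq:bound-fp-derivative-2-derivative-2}.
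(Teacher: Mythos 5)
Your proposal follows essentially the same route as the paper's proof: both exploit the explicit image-method representation \eqref{eq:FP-sol-2}, the same catalogue of Gaussian integrals for $\Phi_t$ and $\p_v\Phi_t$ (the paper's \eqref{eq:Phi1}, \eqref{eq:Phi2}, \eqref{eq:pvPhi_1}, \eqref{eq:pvPhi_2}), and the same identity $\p_v\Phi_t=-e^t\p_w\Phi_t$ to transfer the derivative onto $\widetilde{u}_0$; the paper's repeated Cauchy--Schwarz estimate with $\Phi_t$ as a weight is exactly the Schur test you invoke. One tiny bookkeeping slip: you list \eqref{eq:bound-fp-infty-infty-derivative} among the Cauchy--Schwarz pairings, but it is an $L^\infty$-to-$L^\infty$ bound obtained by pairing $u_0$ in $L^\infty$ with $\p_v\Phi_t(v,\cdot)$ in $L^1_w$ (the $\int_\R|\p_v\Phi_t|\,\mathrm{d}w\lesssim t^{-1/2}$ estimate you already record), not by an $L^2$ pairing.
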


\begin{proof}
  These estimates are obtained in a straightforward way from the
  explicit solution \eqref{eq:FP-sol-2}, and are a consequence of the
  corresponding estimates for the standard Fokker-Planck equation on
  $\R$. We use the standard properties
  \begin{equation}
    \label{eq:Phi1}
    \int_{-\infty}^{+\infty} \Phi_t(v,w) \d v = 1,
    \qquad
    \int_{-\infty}^{+\infty} \Phi_t(v,w) \d w = e^t
  \end{equation}
  and
  \begin{equation}
    \label{eq:Phi2}
    \int_{-\infty}^{+\infty} (\Phi_t(v,w))^2 \d w
    = \frac{1}{2 \sqrt{\pi}} (1 - e^{-2t})^{-\frac12} e^t.
  \end{equation}
  In order to obtain \eqref{eq:bound-fp-infty-infty}, we directly apply
  the second part of \eqref{eq:Phi1} to the expression
  \eqref{eq:FP-sol-2}. To obtain \eqref{eq:bound-fp-2-2} from
  \eqref{eq:FP-sol-2} we use the Cauchy-Schwarz inequality and the
  second part of \eqref{eq:Phi1} to get
  \begin{multline}
    \label{eq:bound22}
    \left(
      \int_{-\infty}^{\infty} \widetilde{u}_0(w) \Phi_t(v-V_F, w-V_F) \d w
    \right)^2
    \\
    \leq
    \left(
      \int_{-\infty}^{\infty} \widetilde{u}_0(w)^2 \Phi_t(v-V_F,
      w-V_F) \d w
    \right)
    \left(
      \int_{-\infty}^{\infty} \Phi_t(v-V_F, w-V_F) \d w
    \right)
    \\
    =
    e^t \left(
      \int_{-\infty}^{\infty} \widetilde{u}_0(w)^2 \Phi_t(v-V_F,
      w-V_F) \d w
    \right).
  \end{multline}
  Integrating now in $v$ and taking into account that
  $\| \widetilde{u_0} \|_2^2 = 2 \| u_0 \|_2^2$ gives
  \eqref{eq:bound-fp-2-2}.

  For the estimates involving the derivative $\p_v$ we note that
  \begin{equation}
    \label{eq:Phi-2}
    \p_v \Phi_t(v,w)
    =
    - e^t \p_w \Phi_t(v,w).
  \end{equation}
  Using this, to obtain \eqref{eq:bound-fp-derivative-infty-infty} we
  write
  \begin{multline}
    \label{eq:pvu}
    \p_v u(v, t)
    =
    \int_{-\infty}^{\infty} \widetilde{u}_0(w)
    \p_v \Phi_t(v-V_F, w-V_F) \d w
    \\
    =
    e^t \int_{-\infty}^{\infty} \p_w \widetilde{u}_0(w)
    \Phi_t(v-V_F, w-V_F) \d w,
  \end{multline}
  and now take the supremum of both sides and use the second part of
  \eqref{eq:Phi1}. We can also obtain
  \eqref{eq:bound-fp-derivative-2-derivative-2} from this same
  expression by using a very similar argument as in
  \eqref{eq:bound22}.
  
  Regarding the regularization estimates, in order to show
  $\| e^{tF} u_0 \|_\infty \leq C t^{-1/4} \|u_0\|_2$, i.e. 
  \eqref{eq:bound-fp-2-infty}, from \eqref{eq:FP-sol-2} we easily get
  \begin{equation*}
    \|e^{tF} u_0\|_\infty \leq \sqrt{2} \|u_0\|_2 \|\Phi_t(v,\cdot)\|_2,
  \end{equation*}
  and using \eqref{eq:Phi2} gives \eqref{eq:bound-fp-2-infty}. To show
  \eqref{eq:bound-fp-infty-infty-derivative}, we take the supremum in the
  first expression of equation \eqref{eq:pvu} and use the estimate
  \begin{equation}
    \label{eq:pvPhi_1}
    \int_{-\infty}^{+\infty}
    \left| \p_v \Phi_t(v,w) \right| \d w
    = \frac{\sqrt{2} e^t}{\sqrt{\pi (1-e^{-2t})}}
    \int_{-\infty}^{+\infty} |z| e^{-z^2} \d z
    \lesssim
    C t^{-\frac12}
    \qquad
    \text{for $0 < t \leq 1$.}
  \end{equation}
  To prove $  \| \p_ve^{tF} u_0 \|_\infty \leq C t^{-3/4} \|u_0\|_2$, i.e. 
  \eqref{eq:bound-fp-derivative-2-infty}, we use \eqref{eq:pvu} to
  write
  \begin{equation*}
    \| \p_v e^{tF} u_0 \|_\infty
    \leq
    e^t \sqrt{2} \| u_0 \|_2 \|\p_v \Phi_t(v, \cdot)\|_2.
  \end{equation*}
  The fact that
  \begin{equation}
    \label{eq:pvPhi_2}
    \int_{-\infty}^{+\infty} (\p_v \Phi_t(v,w))^2 \d w
    =
    \frac{\sqrt{2}}{\pi} e^t (1 - e^{-t})^{-\frac32}
    \int_{-\infty}^{+\infty} z^2 e^{-2z^2} \d z
  \end{equation}
  now shows \eqref{eq:bound-fp-derivative-2-infty}. To prove
  $  \| \p_ve^{tF} u_0 \|_2 \leq C t^{-1/2} \|u_0\|_2$, i.e. 
    \eqref{eq:bound-fp-derivative-2-2}, we  use the Cauchy-Schwarz
    inequality in \eqref{eq:pvu} in a similar way as in equation
    \eqref{eq:bound22} to get
    \begin{equation*}
      \| \p_v e^{tF} u_0 \|_2
      \leq
      C e^{\frac{t}{2}} t^{-\frac14} \left(
        \int_{-\infty}^{\infty} \int_{-\infty}^{\infty}
        \widetilde{u}_0(w)^2 |\p_v \Phi_t(v-V_F,
        w-V_F)| \d v \d w
      \right)^{\frac12},
    \end{equation*}
    where we also used \eqref{eq:pvPhi_1}. Now the bound
    \begin{equation*}
      \int_{-\infty}^{\infty} |\p_v \Phi_t(v, w)| \d v
      = \frac{\sqrt{2}}{\sqrt{\pi (1 - e^{-2t})}}
      \int_{-\infty}^{+\infty} |z| e^{-z^2} \d z
      \lesssim
      C t^{-\frac12}
      \qquad
      \text{for $0 < t \leq 1$}
    \end{equation*}
    completes the proof of
    \eqref{eq:bound-fp-derivative-2-2}. Finally, to prove
    \eqref{eq:bound-fp-derivative-infty-2},
    i.e. $   \| \p_ve^{tF} u_0 \|_\infty \leq Ct^{-1/4} \|\p_v u_0\|_2$,
    we take the supremum in the
    last expression of \eqref{eq:pvu}, apply the Cauchy-Schwarz
    inequality, and then use \eqref{eq:Phi2}.
\end{proof}

We also need some additional weighted $L^2$ well-posedness and
regularization estimates with Gaussian weight, which we state
separately here:

\begin{lem}[$L^2$ Gaussian estimates for the Fokker-Planck equation]
  \label{lem:FP-Gaussian-L2-estimates}
  Take $v_0 \in \R$ and let $\varphi(v) := \exp((v-v_0)^2 / 2)$.
  There exist $t_0 > 0$ and $C > 0$ depending only on $v_0$ and $V_F$
  such that
  \begin{equation}
    \label{eq:FP-Gaussian-estimate-1}
    \| \p_v e^{tF} u_0 \|_{L^2(\varphi)}
    \leq C \| \p_v u_0 \|_{L^2(\varphi)}
    \qquad
    \text{for all $0 < t \leq t_0$,}
  \end{equation}
  and 
  \begin{equation}
    \label{eq:FP-Gaussian-estimate-2}
    \| \p_v e^{tF} u_0 \|_{L^2(\varphi)}
    \leq C t^{-1/2} \| u_0 \|_{L^2(\varphi)}
    \qquad
    \text{for all $0 < t \leq t_0$,}
  \end{equation}
  both for any $u_0 \in X$.
\end{lem}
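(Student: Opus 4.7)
The plan is to obtain both bounds via energy identities on the PDE satisfied by $u$ and on the one satisfied by $w := \p_v u$. Differentiating $\p_t u = F u$ in $v$ gives
\[
\p_t w = \p_v^2 w + (v - V_F) \p_v w + 2 w
\qquad \text{on } (-\infty, V_F],
\]
and the key algebraic observation that makes these energy estimates close is that $\p_v^2 u(V_F, t) = 0$ identically in $t$: evaluating $\p_t u = \p_v^2 u + u + (v - V_F)\p_v u$ at $v = V_F$ and using $u(V_F, t) \equiv 0$ forces this, equivalently $\p_v w(V_F, t) = 0$. This makes the relevant boundary contributions at $V_F$ cancel cleanly. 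The Gaussian decay of $u$, $\p_v u$, $\p_v^2 u$ as $v \to -\infty$ inherited from the explicit representation \eqref{eq:FP-sol-2} justifies all integrations by parts that follow.

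For \eqref{eq:FP-Gaussian-estimate-1} I multiply the equation for $w$ by $2 w \varphi$ and integrate on $(-\infty, V_F]$. Two integrations by parts, combined with $\varphi'(v) = (v - v_0)\varphi(v)$, $\varphi''(v) = (1 + (v-v_0)^2)\varphi(v)$, and the identity
\[
\varphi''(v) - \varphi(v) - (v - V_F)\varphi'(v) = (v - v_0)(V_F - v_0)\varphi(v),
\]
together with the companion IBP identity $\int_{-\infty}^{V_F}(v-v_0) w^2 \varphi \d v = w(V_F)^2 \varphi(V_F) - 2\int_{-\infty}^{V_F} w \p_v w\, \varphi \d v$, cause all occurrences of $w(V_F)^2 \varphi(V_F)$ to cancel. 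What survives is
\[
\ddt \|w\|_{L^2(\varphi)}^2
= -2 \|\p_v w\|_{L^2(\varphi)}^2
- 2(V_F - v_0)\int_{-\infty}^{V_F} w \p_v w\, \varphi \d v
+ 4 \|w\|_{L^2(\varphi)}^2.
\]
Young's inequality absorbs the cross term into $\|\p_v w\|_{L^2(\varphi)}^2$, giving $\ddt \|w\|_{L^2(\varphi)}^2 \leq C \|w\|_{L^2(\varphi)}^2$ with $C$ depending only on $V_F$ and $v_0$. Gronwall on $[0, t_0]$ then yields \eqref{eq:FP-Gaussian-estimate-1}.

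For \eqref{eq:FP-Gaussian-estimate-2} I run the analogous computation on $u$ itself: since $u(V_F, t) = 0$ all boundary contributions at $V_F$ vanish outright, and the same manipulations lead to
\[
\ddt \|u\|_{L^2(\varphi)}^2 \leq -\|\p_v u\|_{L^2(\varphi)}^2 + C \|u\|_{L^2(\varphi)}^2.
\]
I then introduce the parabolic-bootstrapping functional $E(t) := \|u(\cdot, t)\|_{L^2(\varphi)}^2 + t \|\p_v u(\cdot, t)\|_{L^2(\varphi)}^2$, differentiate in $t$, and combine the inequality just obtained (whose $-\|\p_v u\|_{L^2(\varphi)}^2$ absorbs the $+\|\p_v u\|_{L^2(\varphi)}^2$ coming from the product rule on $t \, \|\p_v u\|^2$) with the bound $\ddt \|\p_v u\|_{L^2(\varphi)}^2 \leq C \|\p_v u\|_{L^2(\varphi)}^2$ already proved for $w = \p_v u$. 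This gives $E'(t) \leq C E(t)$ and hence, via Gronwall and $E(0) = \|u_0\|_{L^2(\varphi)}^2$, the bound $t \|\p_v u(\cdot, t)\|_{L^2(\varphi)}^2 \leq e^{Ct}\|u_0\|_{L^2(\varphi)}^2$, which is \eqref{eq:FP-Gaussian-estimate-2} for $t \leq t_0$. The main technical obstacle is precisely the boundary contribution at $V_F$: since $\varphi$ is not tuned to the drift when $v_0 \ne V_F$, and $\p_v u(V_F, t) = N_u(t)$ is generally nonzero, one cannot simply drop boundary terms. The identity $\p_v^2 u(V_F) = 0$ together with the $(v - v_0)$-IBP cancellation is what resolves this, and is the reason these weighted estimates hold for any $v_0 \in \R$, not only for $v_0 = V_F$.
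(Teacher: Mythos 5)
Your proof is correct, and it takes a genuinely different route from the paper's. The paper proves Lemma~\ref{lem:FP-Gaussian-L2-estimates} by working directly from the explicit kernel representation \eqref{eq:FP-sol-2}: it passes the derivative onto the kernel via the identity $\p_v \Phi_t = -e^t \p_w \Phi_t$, applies a weighted Cauchy--Schwarz (with a H\"older exponent $p\in(1,2)$ chosen to tame the weight when $v_0 \neq 0$), and evaluates the resulting Gaussian integrals explicitly, as in \eqref{eq:G1}--\eqref{eq:G3}; for the regularization estimate it uses the explicit form of $\p_v\Phi_t$ and the reflected kernel $\Psi_t$. You instead run weighted energy identities on the PDEs for $u$ and $w=\p_v u$, and the key algebraic inputs --- the observation $\p_v^2 u(V_F,t)=0$ forced by $u(V_F,t)\equiv 0$, the identity $\varphi'' - \varphi - (v-V_F)\varphi' = (v-v_0)(V_F-v_0)\varphi$, and the companion IBP identity that makes the $w(V_F)^2\varphi(V_F)$ boundary terms cancel --- are clean, correct, and worth recording; the parabolic bootstrapping functional $E(t)=\|u\|_{L^2(\varphi)}^2 + t\|\p_v u\|_{L^2(\varphi)}^2$ for \eqref{eq:FP-Gaussian-estimate-2} is standard but well adapted. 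The main trade-off is rigor at the level of qualitative hypotheses: your computation of $\ddt\|w\|_{L^2(\varphi)}^2$ presupposes that $\|\p_v w(\cdot,t)\|_{L^2(\varphi)} = \|\p_v^2 u(\cdot,t)\|_{L^2(\varphi)}$ is finite for $t>0$, that the IBPs are justified by decay at $v\to-\infty$, and (for the $E(t)$ argument) that $t\|\p_v u(\cdot,t)\|_{L^2(\varphi)}^2 \to 0$ as $t\to 0^+$ --- none of which follows from $u_0\in X$ alone. You gesture at the explicit representation to justify this, but a fully rigorous version would first prove the bounds for smooth compactly supported $u_0$ (where all these finiteness and limit statements are clear) and then pass to general $u_0\in X$ by density; this should be said explicitly. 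By contrast, the paper's kernel route builds in these finiteness statements as a by-product of the Gaussian integral computations. Your approach has the advantage of being more structural and not relying on the explicit solution formula, so it would adapt more easily to perturbed or variable-coefficient drifts. One cosmetic slip: $N_u(t) = -\p_v u(V_F,t)$, not $+\p_v u(V_F,t)$; this is only a sign and does not affect the argument.
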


\begin{rem}
  \label{rem:iterate}
  Of course, estimate \eqref{eq:FP-Gaussian-estimate-1} (which has the
  same norm on both sides) can be iterated for later times to obtain
  \begin{equation*}
    \| \p_v e^{tF} u_0 \|_{L^2(\varphi)}
    \leq C e^{\sigma t} \| \p_v u_0 \|_{L^2(\varphi)}
    \qquad \text{for all $t \geq 0$}
  \end{equation*}
  for some (other) $C > 0$ and some $\sigma > 0$. We have chosen to
  state this kind of estimates only for a short time when we have no
  straightforward estimate on $\sigma$.
\end{rem}

\begin{proof}[Proof of Lemma \ref{lem:FP-Gaussian-L2-estimates}]
  \textbf{Proof of the first estimate when $v_0=0$.} By possibly
  changing $v_0$ it is enough to write the proof for $V_F = 0$. We
  will first prove the result also assuming that $v_0 = 0$, and then
  point out the modifications needed to obtain the general case. Call
  $u(\cdot,t) = e^{tF} u_0$. From \eqref{eq:FP-sol-2} we have
  \begin{multline*}
    \p_v u(v,t) =
    \int_{-\infty}^0 u_0(w) \p_v \Psi_t(v,w) \d w
    \\
    =
    -e^t \int_{-\infty}^0
    u_0(w) \p_w (\Phi_t(v,w) + \Phi_t(v,-w)) \d w
    \\
    =
    e^t \int_{-\infty}^0
    \p_w u_0(w) (\Phi_t(v,w) + \Phi_t(v,-w)) \d w,
  \end{multline*}
  where we have used the property \eqref{eq:Phi-2} to change $\p_v$ by
  $\p_w$. Hence,
  \begin{equation}
    \label{eq:G0}
    | \p_v u(v,t) |
    \leq
    2 e^t \int_{-\infty}^0
    |\p_w u_0(w)|\, \Phi_t(v,w) \d w.
  \end{equation}
  We use this expression to write
  \begin{multline}
    \label{eq:G1}
    \int_{-\infty}^{0} \varphi(v) (\p_v u(v, t))^2 \d v
    \leq
    4 e^{2t} \int_{-\infty}^{0} \varphi(v) \left(
      \int_{-\infty}^{0} \p_w u_0(w)
      \Phi_t(v, w) \d w
    \right)^2
     \d v
    \\
    \leq
    4 e^{3t} \int_{-\infty}^{0} \varphi(v)
    \int_{-\infty}^{0} (\p_w {u}_0(w))^2
    \Phi_t(v, w) \d w  \d v
    \\
    =
    4 e^{3t}  \int_{-\infty}^{0} (\p_w {u}_0(w))^2
    \int_{-\infty}^{0} \varphi(v) \Phi_t(v, w)  \d v \d w,
  \end{multline}
  where we have used the Cauchy-Schwarz inequality and also
  \eqref{eq:Phi1}. In the case $v_0=0$ we can estimate the inner
  integral by (using the shorthand $r(t) := 1- e^{-2t}$)
  \begin{multline}
    \label{eq:G2}
    \int_{-\infty}^{+\infty} \varphi(v) \Phi_t(v, w)  \d v
    =
    \frac{1}{\sqrt{2\pi r(t)}}
    \int_{-\infty}^{+\infty} e^{\frac{v^2}{2}}
    \exp\left(
      {-\frac{( v-e^{-t}w )^2}{2r(t)}}
    \right)
    \d v
    \\
    =
    \frac{1}{\sqrt{1 - r(t)}}
    \exp\left(
      {\frac{ w^2}{2e^{2t}(1 - r(t))}}
    \right)
     =
    {e^{2t}}
    \exp\left(
      \frac{ w^2}{2}
    \right)
    =
    {e^{2t}}
    \varphi(w).
  \end{multline}
  In order to calculate the integral above we have used the following
  well-known formula for a Gaussian integral with $\alpha > 0$:
  \begin{equation}
    \label{eq:Gaussian-integral}
    \int_{-\infty}^{+\infty} e^{-\alpha v^2 + \beta v + \gamma} \d v
    = \frac{\sqrt{\pi}}{\sqrt{\alpha}}
    e^{\frac{\beta^2}{4\alpha} + \gamma}.
  \end{equation}
  In the previous calculation we used formula
  \eqref{eq:Gaussian-integral} with $\alpha = (1-r(t))/(2 r(t))$,
  $\beta = e^{-t} w / r(t)$ and $\gamma = -e^{-2t}w^2 / 2r(t)$. From
  \eqref{eq:G1} and \eqref{eq:G2} we immediately obtain
  \eqref{eq:FP-Gaussian-estimate-1}.

  \medskip
  \noindent
  \textbf{Proof of the first estimate for a general $v_0$.} Now, for
  any $v_0 \in \R$ we proceed in a similar way as before, but now
  choosing $p,q > 1$ such that $\frac{1}{p} + \frac{1}{q} = 1$ when
  applying the Cauchy-Schwarz inequality:
  \begin{multline}
    \label{eq:Gcauchy}
    \left( \int_{-\infty}^{0} |\p_w u_0(w)| \Phi_t(v, w) \d w \right)^2
    \\
    \leq
    \left( \int_{-\infty}^{0} (\p_w u_0(w))^2 \,
      \Phi_t(v, w)^{\frac{2}{p}} \d w \right)
    \left( \int_{-\infty}^{0} \Phi_t(v, w)^{\frac{2}{q}} \d w \right)
    \\
    =
    \frac{1}{2 \pi r(t)}
    \left( \int_{-\infty}^{0} (\p_w u_0(w))^2 \,
      e^{ -\frac{(v - e^{-t} w)^2}{p r(t)} }
      \d w \right)
    \left( \int_{-\infty}^{0}
      e^{ -\frac{(v - e^{-t} w)^2}{q r(t)} }
      \d w \right)
    \\
    \leq
    \frac{C e^t}{\sqrt{r(t)}}
    \int_{-\infty}^{0} (\p_w u_0(w))^2 \,
    e^{ -\frac{(v - e^{-t} w)^2}{p r(t)} }
    \d w.
  \end{multline}
  Using this in \eqref{eq:G0} we get
  \begin{multline}
    \label{eq:G3}
    \int_{-\infty}^{0} \varphi(v) (\p_v u(v, t))^2 \d v
    \leq
    4 e^{2t} \int_{-\infty}^{0} \varphi(v) \left(
      \int_{-\infty}^{0} |\p_w u_0(w)|
      \Phi_t(v, w) \d w
    \right)^2
     \d v
    \\
    \leq
    \frac{C e^{3t}}{\sqrt{ r(t)}}
    \int_{-\infty}^{0} \varphi(v)
    \int_{-\infty}^{0} (\p_w {u}_0(w))^2
    e^{ -\frac{(v - e^{-t} w)^2}{p r(t)} }
     \d w  \d v
    \\
    =
    \frac{C e^{3t}}{\sqrt{ r(t)}}
    \int_{-\infty}^{0}
     (\p_w {u}_0(w))^2
    \int_{-\infty}^{0} \varphi(v)
    e^{ -\frac{(v - e^{-t} w)^2}{p r(t)} }
     \d v  \d w.
  \end{multline}
  An application of the integral formula \eqref{eq:Gaussian-integral}
  gives
  \begin{multline*}
    \frac{1}{\sqrt{r(t)}}
    \int_{-\infty}^{0} \varphi(v)
    e^{ -\frac{(v - e^{-t} w)^2}{p r(t)} }
    \d v
    \leq
    \int_{-\infty}^{+\infty} e^{\frac{(v-v_0)^2}{2}}
    e^{ -\frac{(v - e^{-t} w)^2}{p r(t)} }
    \d v
    \\
    \leq
    \frac{C}{\sqrt{1 - \frac{p}{2}r(t)}}
    \exp\left(
      {\frac{ (e^{-t}w - v_0)^2}{2 - p r(t)}}
    \right).
  \end{multline*}
  In order to conclude as before it is enough to show that there is
  some constant $C > 0$ such that
  \begin{equation*}
    \exp\left(
      {\frac{ (e^{-t}w - v_0)^2}{2 - p r(t)}}
    \right)
    \leq C 
    \exp\left(
      {\frac{ (w - v_0)^2}{2}}
    \right)
    \qquad
    \text{for all $0 \leq t \leq 1$ and $w \leq 0$.}
  \end{equation*}
  Gathering terms one sees it is enough to show that there exists
  some (other) constant $C > 0$ such that
  \begin{equation*}
    w^2 \left( \frac{1}{e^{2t} (2 - p r(t)) } - \frac{1}{2} \right)
    + |w| |v_0| \left( 1 - \frac{2 e^{-t}}{2 - pr(t)} \right) \leq C
  \end{equation*}
  for all $w \leq 0$ and all $0 \leq t \leq 1$. Choosing $p = 3/2$
  (in fact, any $p \in (1,2)$ works) we may find $a, b > 0$ such that
  \begin{equation*}
    w^2 \left( \frac{1}{e^{2t} (2 - p r(t)) } - \frac{1}{2} \right)
    + |w| |v_0| \left( 1 - \frac{2 e^{-t}}{2 - pr(t)} \right)
    \leq
    - a t w^2 + b t |w| |v_0|
  \end{equation*}
  for all $w \leq 0$ and all $0 \leq t \leq 1$.  These $a$,$b$ are
  obtained easily by noticing that both of the terms in brackets are
  $0$ at $t=0$, and have simple linear bounds. We also notice that the
  factor of $w^2$ is negative whenever $t > 0$ and $p < 2$. Now by
  finding the maximum of this parabola in $w$ it
  is easy to find $C > 0$ such that
  \begin{equation*}
    - a t w^2 + b t |w| |v_0|
    \leq C
  \end{equation*}
  for all $w \leq 0$ and all $0 \leq t \leq 1$. This shows the result.

  \medskip
  \noindent
  \textbf{Proof of the second estimate.} The second estimate in the
  statement can be obtained in a similar way. Choose $1 < p < 2$. By
  using the explicit expression of $\p_v \Phi_t$ from
  \eqref{eq:fp-kernel-derivative} we first notice that
  \begin{gather}
    |\p_v \Phi_t(v,w)| \lesssim \frac{1}{r(t)} e^{-\frac{(v - e^{-t}
        w)^2}{2 p r(t)}},
    \\
    |\p_v \Phi_t(v, -w)|
    \lesssim
    \frac{1}{r(t)} e^{-\frac{(v + e^{-t}
        w)^2}{2 p r(t)}}
    \lesssim
    \frac{1}{r(t)} e^{-\frac{(v - e^{-t}
        w)^2}{2 p r(t)}},
  \end{gather}
  which implies
  \begin{equation*}
    |\p_v \Psi_t(v,w)|
    \lesssim
    \frac{1}{r(t)} e^{-\frac{(v - e^{-t} w)^2}{2 p r(t)}}.
  \end{equation*}
  Now we carry out a similar estimate as in \eqref{eq:Gcauchy} to get
    \begin{multline*}
    \left( \int_{-\infty}^{0} u_0(w) \p_v \Psi_t(v, w) \d w \right)^2
    \\
    \leq
    \left( \int_{-\infty}^{0} u_0(w)^2 \,
      (\p_v \Psi_t(v, w))^{\frac{2}{p}} \d w \right)
    \left( \int_{-\infty}^{0} (\p_v \Psi_t(v, w))^{\frac{2}{q}} \d w \right)
    \\
    \lesssim
    \frac{1}{r(t)^2}
    \left( \int_{-\infty}^{0} u_0(w)^2 \,
      e^{ -\frac{(v - e^{-t} w)^2}{p^2 r(t)} }
      \d w \right)
    \left( \int_{-\infty}^{0}
      e^{ -\frac{(v - e^{-t} w)^2}{p q r(t)} }
      \d w \right)
    \\
    \lesssim
    \frac{1}{(r(t))^{\frac32}}
    \int_{-\infty}^{0} u_0(w)^2 \,
    e^{ -\frac{(v - e^{-t} w)^2}{p^2 r(t)} }
    \d w.
  \end{multline*}
  Proceeding as in \eqref{eq:G3},
  \begin{multline*}
    \int_{-\infty}^{0} \varphi(v) (\p_v u(v, t))^2 \d v
    \leq
    \int_{-\infty}^{0} \varphi(v) \left(
      \int_{-\infty}^{0} u_0(w)
      \p_v \Psi_t(v, w) \d w
    \right)^2
     \d v
    \\
    \lesssim
    \frac{1}{(r(t))^{\frac32}}
    \int_{-\infty}^{0}
     {u}_0(w)^2
    \int_{-\infty}^{0} \varphi(v)
    e^{ -\frac{(v - e^{-t} w)^2}{p^2 r(t)} }
     \d v  \d w.
  \end{multline*}
  Choosing any $1 < p < 2$ such that also $1 < p^2 < 2$ we may carry
  out the same reasoning we did after \eqref{eq:G3} to get
  \begin{multline*}
    \int_{-\infty}^{0} \varphi(v) (\p_v u(v, t))^2 \d v
    \leq
    \int_{-\infty}^{0} \varphi(v) \left(
      \int_{-\infty}^{0} u_0(w)
      \p_v \Psi_t(v, w) \d w
    \right)^2
     \d v
    \\
    \lesssim
    \frac{1}{r(t)}
    \int_{-\infty}^{0}
     {u}_0(w)^2
    \varphi(w)
    \d w.
  \end{multline*}
  This finishes the result.
\end{proof}

\subsubsection{Bounds for the Fokker-Planck equation plus the source term}
\label{sec:plus_delta}
In the following lemma, we show  the bounds for the semigroup that adds
the source term $\delta_{V_R}N_u(t)$ to the previous one.
\begin{lem}
  \label{lem:F-L1}
  Denote by $e^{tA} u_0$ the classical solution to
  \begin{equation}
    \label{eq:FP-plus-delta}
    \p_t u = F u + L_1 u = \p_v^2 u + \p_v [(v-V_F) u] +
    \delta_{V_R} N_u(t)
  \end{equation}
  in the domain $v\in (-\infty,V_F]$, $t \ge 0$, with the Dirichlet
  boundary condition \eqref{eq:linear-PDE-bc} and with an initial
  condition $u_0 \in X$. Then there exist $C > 0$ and $t_0 > 0$
  depending only on the system parameters such that the following
  well-posedness inequalities hold for all $0 \leq t \leq t_0$:
  \begin{align}\label{eq:bound-linear-incomplete-derivative-infty-infty}
   \| \p_ve^{tA} u_0 \|_\infty &\leq C \| \p_vu_0\|_\infty,
     \\
    \label{eq:bound-linear-incomplete-derivative-2-derivative-2}
    \| \p_ve^{tA} u_0 \|_2 &\leq C \|\p_vu_0\|_2,
     \\
    \label{eq:bound-linear-incomplete-infty-infty}
    \| e^{tA} u_0 \|_\infty &\leq C \|u_0\|_\infty,
     \\
    \label{eq:bound-linear-incomplete-2-2}
    \| e^{tA} u_0 \|_2 &\leq C \|u_0\|_2,
  \end{align}
  and the following ``regularization'' inequalities hold for all times
  $0 < t \leq t_0$:
  \begin{align}\label{eq:bound-linear-incomplete-derivative-2-infinity}
    \| \p_ve^{tA} u_0 \|_{\infty} &\leq C t^{-3/4} \|u_0\|_2.
    \\
    \label{eq:bound-linear-incomplete-infty-infty-derivative}
    \| \p_ve^{tA} u_0 \|_\infty &\leq Ct^{-1/2} \|u_0\|_\infty.
    \\
    \label{eq:bound-linear-incomplete-derivative-2-2}
    \| \p_ve^{tA} u_0 \|_2 &\leq C t^{-1/2} \|u_0\|_2.
    \\
    \label{eq:bound-linear-incomplete-2-infinity}
    \| e^{tA} u_0 \|_\infty &\leq C t^{-1/4} \|u_0\|_2.
  \end{align}
\end{lem}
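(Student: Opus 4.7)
The plan is to use Duhamel's formula to reduce the eight estimates to the corresponding Fokker--Planck bounds of Lemma~\ref{lem:FP-bounds}, via a scalar Volterra equation for the firing rate $N_u(t)$. Concretely, I would write
\[
e^{tA} u_0 = e^{tF} u_0 + \int_0^t e^{(t-s)F} \delta_{V_R} \, N_u(s) \, \d s,
\]
where the action of $e^{\tau F}$ on $\delta_{V_R}$ is given by the explicit kernel of \eqref{eq:FP-sol-2},
\[
(e^{\tau F} \delta_{V_R})(v) = \Psi_\tau(v - V_F,\, V_R - V_F).
\]
Taking $-\p_v$ at $v = V_F$ on both sides yields the scalar Volterra equation
\[
N_u(t) = N_{e^{tF} u_0}(t) + \int_0^t K(t-s)\, N_u(s) \, \d s,
\qquad K(\tau) := -\p_v \Psi_\tau(v - V_F, V_R - V_F)\big|_{v=V_F}.
\]
A direct computation from \eqref{eq:fp-kernel-derivative} gives
\[
K(\tau) = \frac{2 e^{-\tau}(V_F - V_R)}{\sqrt{2\pi}\,(1-e^{-2\tau})^{3/2}}
\exp\!\left(-\frac{(V_F - V_R)^2 e^{-2\tau}}{2\,(1-e^{-2\tau})}\right),
\]
which, crucially because $V_R < V_F$, is smooth on $[0,\infty)$ and decays faster than any polynomial as $\tau \to 0^+$. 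In particular $K \in L^\infty(0, t_0) \cap L^1(0, t_0)$, with norms that vanish as $t_0 \to 0$.

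Second, for $t_0$ small enough a short-time Neumann iteration applied to the Volterra equation gives
\[
|N_u(t)| \,\leq\, C \,|N_{e^{tF} u_0}(t)| + C \int_0^t |N_{e^{sF} u_0}(s)| \, \d s,
\qquad 0 \leq t \leq t_0.
\]
The source $N_{e^{sF} u_0}(s) = -\p_v(e^{sF} u_0)(V_F)$ is controlled by $\|\p_v e^{sF} u_0\|_\infty$, and for this I would apply, depending on which of the eight target estimates is being proved, the four different sup-norm bounds on $\p_v e^{sF} u_0$ supplied by Lemma~\ref{lem:FP-bounds}, namely \eqref{eq:bound-fp-derivative-infty-infty}, \eqref{eq:bound-fp-infty-infty-derivative}, \eqref{eq:bound-fp-derivative-2-infty} and \eqref{eq:bound-fp-derivative-infty-2}. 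This produces four sources of the form $|N_{e^{sF} u_0}(s)| \lesssim s^{-\beta} \|u_0\|_\sharp$ with $\beta \in \{0, 1/2, 3/4, 1/4\}$, all integrable at $s=0$, and the Volterra bound above inherits the same $s^{-\beta}$ behaviour.

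Third, to close each of the eight estimates I would insert the matching bound on $N_u$ back into Duhamel and estimate the convolution by
\[
\Big\| \p_v^\alpha \!\int_0^t \!e^{(t-s)F} \delta_{V_R} \, N_u(s) \, \d s \Big\|_*
\leq \int_0^t \big\| \p_v^\alpha \Psi_{t-s}(\cdot - V_F, V_R - V_F) \big\|_* \, |N_u(s)| \, \d s,
\]
where $\|\cdot\|_*$ is $L^\infty$ or $L^2$ and $\alpha \in \{0,1\}$. Direct calculation from \eqref{eq:fp-kernel}--\eqref{eq:fp-kernel-derivative} gives, uniformly in the fixed second argument, $\|\Psi_\tau\|_\infty \lesssim \tau^{-1/2}$, $\|\Psi_\tau\|_2 \lesssim \tau^{-1/4}$, $\|\p_v \Psi_\tau\|_\infty \lesssim \tau^{-1/2}$ and $\|\p_v \Psi_\tau\|_2 \lesssim \tau^{-3/4}$. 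The remaining time integrals are Beta integrals,
\[
\int_0^t (t-s)^{-\alpha} s^{-\beta} \, \d s = t^{1-\alpha-\beta} B(1-\alpha, 1-\beta),
\]
and in each of the eight cases the combination of $\alpha$ and $\beta$ yields exactly the power of $t$ prescribed by \eqref{eq:bound-linear-incomplete-derivative-infty-infty}--\eqref{eq:bound-linear-incomplete-2-infinity}, or a smaller one dominated by the non-integral term $e^{tF} u_0$, which is handled directly by Lemma~\ref{lem:FP-bounds}.

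The main obstacle I anticipate is bookkeeping rather than conceptual: one has to check, case by case, that $\alpha + \beta < 1$ so that the Beta integral makes sense, and that $t^{1-\alpha-\beta}$ matches (or is dominated by) the target power on the right-hand side of the corresponding inequality in the statement. In particular, for \eqref{eq:bound-linear-incomplete-2-2} the natural choice is to pair $\|\Psi_{t-s}\|_2 \lesssim (t-s)^{-1/4}$ with the source $s^{-3/4} \|u_0\|_2$ coming from \eqref{eq:bound-fp-derivative-2-infty}, so that $\alpha + \beta = 1$ produces a $t$-independent constant. The only nontrivial input beyond Lemma~\ref{lem:FP-bounds} is the rapid decay of $K(\tau)$ at $\tau = 0$, which makes the Neumann iteration converge and reflects the fact that mass injected at $V_R$ needs a positive time to diffuse up to the boundary $V_F$.
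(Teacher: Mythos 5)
Your overall strategy matches the paper's: Duhamel's formula relative to $e^{tF}$, a scalar Volterra equation for $N_u(t)$ closed via a short-time Gronwall/Neumann argument with the four sup-norm derivative bounds of Lemma~\ref{lem:FP-bounds} as sources, and then reinsertion into Duhamel. The Volterra setup and the $N_u$ bounds are correct, and your observation that $K(\tau)$ decays superexponentially as $\tau\to 0^+$ (because $V_R < V_F$, so the mass injected at $V_R$ takes positive time to reach $V_F$) is true and pleasant, though the paper only needs that $K$ is bounded.

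There is, however, a genuine error in the last step, and it is not a bookkeeping detail. You claim $\|\p_v\Psi_\tau(\cdot-V_F,\,V_R-V_F)\|_{L^\infty} \lesssim \tau^{-1/2}$, but this is false: from \eqref{eq:fp-kernel-derivative}, the maximum over $v$ of $|\p_v\Phi_\tau(v,w)|$ occurs at $v - e^{-\tau}w = \pm\sqrt{1-e^{-2\tau}}$ and equals $(2\pi e)^{-1/2}(1-e^{-2\tau})^{-1}\sim\tau^{-1}$; for $w = V_R - V_F < 0$ the maximizing point lies in the interior $v\le V_F$ and the reflected term $\p_v\Phi_\tau(v, V_F-V_R)$ is superexponentially small there, so no cancellation rescues you. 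With the correct order $\tau^{-1}$, the convolution bound you propose for the three $L^\infty$-on-derivative estimates, namely
\[
\int_0^t \| \p_v\Psi_{t-s}(\cdot - V_F,V_R-V_F)\|_\infty\,|N_u(s)| \,\d s \lesssim \int_0^t (t-s)^{-1}\, s^{-\beta}\,\d s,
\]
is divergent for any of the relevant $\beta\in\{0,1/2,3/4\}$. So estimates \eqref{eq:bound-linear-incomplete-derivative-infty-infty}, \eqref{eq:bound-linear-incomplete-derivative-2-infinity} and \eqref{eq:bound-linear-incomplete-infty-infty-derivative} cannot be closed this way. The five remaining inequalities (those involving $\|\cdot\|_2$ or the undifferentiated $\|\cdot\|_\infty$) do go through with your method, since the exponents you quote for $\|\Psi_\tau\|_\infty$, $\|\Psi_\tau\|_2$ and $\|\p_v\Psi_\tau\|_2$ are correct.

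The paper's proof avoids this by never taking a supremum in $v$ before integrating in $s$. Instead it fixes $v$ and estimates the time integral pointwise, proving in Lemma~\ref{lem:integrals} that
\[
\int_0^t s^{-\alpha}(1-e^{-2(t-s)})^{-3/2}\left|\tilde v - e^{-(t-s)}\tilde V_R\right| e^{-|\tilde v - e^{-(t-s)}\tilde V_R|^2/(2(1-e^{-2(t-s)}))}\,\d s \le C\,t^{-\alpha}
\]
uniformly in $\tilde v\le 0$. The point is that the set of times $s$ for which $|\p_v\Psi_{t-s}(v-V_F,V_R-V_F)|$ is of size $(t-s)^{-1}$ depends on $v$: for a fixed $v$ the Gaussian factor kills the singularity except when $v-V_F$ is near $e^{-(t-s)}(V_R-V_F)$, and integrating in $s$ with that constraint yields $t^{-\alpha}$ rather than a divergence. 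That is the key input you are missing, and without it the three $\|\p_v\cdot\|_\infty$ estimates do not follow from Lemma~\ref{lem:FP-bounds} by the sup-then-integrate route.
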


As we mentioned in Remark \ref{rem:iterate}, inequalities
\eqref{eq:bound-linear-incomplete-derivative-infty-infty}--\eqref{eq:bound-linear-incomplete-2-2}
can be iterated to obtain a bound for all times.

\begin{proof}
  The solution to this equation can be written through Duhamel's
  formula by using the Fokker-Planck semi-group $e^{tF}$ from Lemma
  \ref{lem:FP-bounds} as follows:
  \begin{multline}
    \label{eq:linear-incomplete}
    u(v,t) = e^{tA}u_0(v)
    =
    e^{tF}u_0(v)
    +
    \int_{0}^{t}e^{\left(t-s\right)F} \left[ N_{u}(s)\delta_{V_R}
    \right] \d s
   \\
   =e^{tF}u_0(v) + \int_{0}^{t}\Psi_{t-s}(v - V_F, V_R-V_F)N_{u}(s)\d s,
  \end{multline}
  where we have explicitly written $e^{(t-s) F}\left[ \delta_{V_R} \right]$ by
  using equation \eqref{eq:FP-sol-2}. We note that this Duhamel's
  formula is a possible definition of classical solutions to
  \eqref{eq:FP-plus-delta}, similarly to the approach in
  \cite{carrillo2013classical}.
  
  Taking the derivative in terms of $v$ of the previous equation, we
  can write the derivative of the solution to the incomplete linear
  equation as
  \begin{equation}
    \label{eq:linear-incomplete-derivative}
    \p_v u(v,t)=\p_v e^{tA}u_0(v)
    =
    \p_v e^{tF}u_0(v)
    +
    \int_{0}^{t} \p_v\Psi_{t-s}(v - V_F, V_R-V_F) N_{u}(s) \d s.
  \end{equation}
  We are first going to show the bounds on the derivative of the
  solution $u$, which will be useful later to prove the bounds on $u$
  itself.
  
  \medskip  
  \noindent
  \textbf{Step 1. Bounds on $\boldsymbol{N_u(t)}$.}  We need to bound $N_u(t)$ by
  the three different norms contained in the norm of $X$. With this
  aim we compute $ N_u(t) $ by evaluating equation
  \eqref{eq:linear-incomplete-derivative} at $ V_F $:
  \begin{equation}\label{eq:linear-incomplete-derivative-1}
    -N_{u}(t) =
    \p_v e^{tF}u_0(V_F)+\int_{0}^{t} N_{u}(s) \p_v\Psi_{t-s}(0,V_R-V_F) \d s,
  \end{equation}
  where we recall that the kernel $\Psi$ is given in 
  \eqref{eq:Psi-kernel}--\eqref{eq:fp-kernel}. This kernel is easily seen to satisfy
  \begin{equation*}
    |\p_v\Psi_{t}(0,V_R-V_F)| < C
    \qquad \text{for all $t>0$,}
  \end{equation*}
  for some constant $C > 0$. Using also the bound
  \eqref{eq:bound-fp-derivative-2-infty}, from
  \eqref{eq:linear-incomplete-derivative-1} we find
  \begin{equation*}
    |N_{u}(t)| \le Ct^{-3/4}\|u_0\|_2 + C\int_{0}^{t}|N_{u}(s)| \d s,
  \end{equation*}
  which can be also written as
  \begin{equation*}
    t^{3/4} |N_{u}(t)| \le C\|u_0\|_2 + Ct^{3/4}\int_{0}^{t}s^{-3/4}s^{3/4}|N_{u}(s)| \d s.
  \end{equation*}
  Now we write the previous equation as
  \begin{equation*}
    z(t) \le C\|u_0\|_2 + Ct^{3/4}\int_{0}^{t}s^{-3/4}z(s)\d s,
  \end{equation*}
  with $ z(t)=t^{3/4}|N_{u}(t)| $, and we apply Lemma \ref{lem:gronwall-continuous} with $ \alpha=3/4 $, $ \beta =0 $ and $ \gamma=3/4 $, which leads to
  \begin{equation}
    \label{eq:N-bound}
    |N_{u}(t)| \le Ct^{-3/4}\|u_0\|_2
  \end{equation}
  for all $t$ small enough. Proceeding as we have done so far,
  but using the bounds
  \eqref{eq:bound-fp-infty-infty-derivative},
  \eqref{eq:bound-fp-derivative-infty-infty} and
  \eqref{eq:bound-fp-derivative-infty-2} instead of
  \eqref{eq:bound-fp-derivative-2-infty}, we end up with:
  \begin{equation}\label{eq:N-bound-infty}
    |N_{u}(t)| \le Ct^{-1/2}\|u_0\|_\infty,
  \end{equation}
  \begin{equation}\label{eq:N-bound-derivative-infty}
    |N_{u}(t)| \le C\|\p_vu_0\|_\infty,
  \end{equation}
  and
  \begin{equation}
    \label{eq:N-bound-derivative-2}
    |N_{u}(t)| \le Ct^{-1/4}\|\p_vu_0\|_2,
  \end{equation}
  all valid for all $t > 0$ small enough.

  \medskip  
  \noindent
  \textbf{Step 2. Bounds on the second term in
    \eqref{eq:linear-incomplete}.} We need to give some bounds for the
  term
  \begin{multline*}
    \int_{0}^{t}\p_v\Psi_{t-s}(v-V_F, V_R-V_F)N_{u}(s) \d s
    \\
    =
    \int_{0}^{t}\p_v\Phi_{t-s}(v-V_F, V_R-V_F)N_{u}(s) \d s
    +
    \int_{0}^{t}\p_v\Phi_{t-s}(v-V_F, V_F-V_R)N_{u}(s) \d s.
  \end{multline*}
  The main difficulty when bounding this lies on the first integral on
  the right-hand side. For this purpose we use the explicit expression
  of the derivative of the kernel, given by equation
  \eqref{eq:fp-kernel-derivative}. We use inequality
  \eqref{eq:N-bound} for $N_u(s)$ as follows, with the notation
  $\tilde{v} \equiv v - V_F$, $\tilde{V}_R \equiv V_R - V_F$ and
  $r(t) := \left(1-e^{-2t}\right)$, (notice that $\tilde{v},\tilde{V_R}
  \in (-\infty,0]$),
  \begin{multline*}
    \left|\int_{0}^{t}\p_v\Phi_{t-s}(v-V_F,V_R-V_F)N_{u}(s) \d
      s\right|
    \\
    \leq (2\pi)^{-\frac12} \int_{0}^{t}r(t-s)^{-3/2}
    |N_{u}(s)|\left|\tilde{v}-e^{-(t-s)}\tilde{V}_R\right|
    e^{-\frac{|\tilde{v}-e^{-(t-s)}\tilde{V}_R|^2} {2r(t-s)} } \d s
    \\
    \le
    (2\pi)^{-\frac12}\|u_0\|_2
    \int_{0}^{t}r(t-s)^{-3/2}s^{-3/4}
    \left|\tilde{v}-e^{-(t-s)}\tilde{V}_R\right|
    e^{-\frac{|\tilde{v}-e^{-(t-s)}\tilde{V}_R|^2}{2r(t-s)}} \d s
    \\
    \le
    Ct^{-3/4}\|u_0\|_2,
  \end{multline*}
  where in the last inequality we have applied the Lemma
  \ref{lem:integrals} with $ \alpha=3/4 $.
  On the other hand, since $v
  - V_F$ and $V_R - V_F$ are both negative,
  \begin{multline*}
    \left|\int_{0}^{t}\p_v\Phi_{t-s}(v-V_F, V_F-V_R)N_{u}(s) \d
      s\right|
    \\
    \leq
    (2\pi)^{-\frac12} \int_{0}^{t}r(t-s)^{-3/2}
    |N_{u}(s)|\left|\tilde{v} + e^{-(t-s)}\tilde{V}_R\right|
    e^{-\frac{|\tilde{v} + e^{-(t-s)}\tilde{V}_R|^2}{2r(t-s)}} \d s
    \\
    \lesssim
    \int_{0}^{t}r(t-s)^{-1}
    |N_{u}(s)| 
    e^{-\frac{|\tilde{v} + e^{-(t-s)}\tilde{V}_R|^2}{4r(t-s)}} \d s
    \\
    \lesssim
    \int_{0}^{t}r(t-s)^{-1}
    |N_{u}(s)| 
    e^{-\frac{\tilde{V}_R^2}{4r(t-s)}} \d s
    \lesssim
    \int_{0}^{t}
    |N_{u}(s)|
    \d s
    \\
    \lesssim
    \|u_0\|_2
    \int_{0}^{t}
    s^{-3/4}
    \d s
    = 4 \|u_0\|_2 t^{1/4}.
  \end{multline*}
  From the last two equations we see that
  \begin{equation*}
    \left|\int_{0}^{t}\p_v\Psi_{t-s}(v-V_F, V_F-V_R)N_{u}(s) \d
      s\right|
    \lesssim
    t^{-3/4} \|u_0\|_2
  \end{equation*}
  for all $t > 0$ small enough. We can repeat this procedure using for
  $N_u(s)$ the bounds \eqref{eq:N-bound-infty},
  \eqref{eq:N-bound-derivative-infty} or
  \eqref{eq:N-bound-derivative-2} instead of \eqref{eq:N-bound},
  obtaining the following four bounds for the integral:
  \begin{equation}
    \label{eq:second-term-bound}
    \left|\int_{0}^{t}\p_v\Psi_{t-s}(v-V_F,V_R-V_F)N_{u}(s) \d s\right|
    \le
    \left\{\begin{split}
	Ct^{-3/4}\|u_0 \|_2 \\
	Ct^{-1/2}\|u_0\|_\infty \\
	C\|\p_vu_0\|_\infty \\
	Ct^{-1/4}\|\p_vu_0\|_2
      \end{split}\right.
  \end{equation}

  \medskip
  \noindent
  \textbf{Step 3. Bounds on $\boldsymbol{\p_v u}$.} Now we can show the bounds for
  the derivative of the solution. With this purpose, in equation
  \eqref{eq:linear-incomplete-derivative} we take the absolute value,
  which leads to
  \begin{equation}\label{eq:linear-incomplete-derivative-2}
    \left|\p_v u(v,t)\right|
    \le
    \left|\p_v e^{tF}u_0(v)\right|
    +
    \left|\int_{0}^{t}\p_v\Psi_{t-s}(v-V_F, V_R-V_F)N_{u}(s) \d s\right|
  \end{equation}
  First, by taking the supremum norm in the inequality
  \eqref{eq:linear-incomplete-derivative-2} together with the first
  bound in \eqref{eq:second-term-bound} we obtain
  \begin{equation*}
    \|\p_v u(.,t)\|_\infty=\|\p_v e^{tA}u_0\|_\infty
    \le
    \|\p_v e^{tF}u_0\|_\infty+Ct^{-3/4}\|u_0\|_2.
  \end{equation*}
  A direct application of the bound
  \eqref{eq:bound-fp-derivative-2-infty} for
  $ \|\p_v e^{tF}u_0\|_\infty $ gives inequality
  \eqref{eq:bound-linear-incomplete-derivative-2-infinity}. Alternatively,
  using the second bound in \eqref{eq:second-term-bound} and
  \eqref{eq:bound-fp-infty-infty-derivative} gives
  \eqref{eq:bound-linear-incomplete-infty-infty-derivative}; and using
  the third bound in \eqref{eq:second-term-bound} and
  \eqref{eq:bound-fp-derivative-infty-infty}, we get to
  \eqref{eq:bound-linear-incomplete-derivative-infty-infty}.
	
  In order to bound the $L^2$ norms of $\p_v u$ we take the $ L^2 $ norm
  in equation \eqref{eq:linear-incomplete-derivative}:
  \begin{equation}
    \label{eq:linear-incomplete-norm-2-derivative}
    \|\p_v e^{tA}u_0\|_2 \le
    \|\p_v e^{tF}u_0\|_2+\int_{0}^{t}\|\p_v\Psi_{t-s}(v-V_F,V_R-V_F)\|_2|N_{u}(s)|\d s.
  \end{equation}
  First we bound $ \|\p_v\Phi_{t}(v-V_F,V_R-V_F)\|_2 $ as follows:
  \begin{multline}
    \label{eq:3}
    \|\p_v\Phi_{t}(v-V_F,V_R-V_F)\|_2^2
    \\
    =
    \frac{1}{2\pi(1-e^{-2t})^{3}}
    \int_{-\infty}^{V_F}|v-V_F - e^{-t}(V_R-V_F)|^2
    e^{-\frac{|v-V_F-e^{-t}(V_R-V_F)|^2}{1-e^{-2t}}}
    \d v
    \\
    \leq
    \frac{1}{2\pi(1-e^{-2t})^{3/2}}
    \int_{\R}w^2e^{-w^2}\d w
    =
    \frac{1}{4\sqrt{\pi}(1-e^{-2t})^{3/2}} \le Ct^{-3/2},
  \end{multline}
  where we used the change of variables
  $ w=\frac{v-V_F - e^{-t}(V_R-V_F)}{(1-e^{-2t})^{1/2}}$. The
  reflected term $ \|\p_v\Phi_{t}(v-V_F,V_F-V_R)\|_2 $ is in turn
  bounded in the same way by using
  $ w=\frac{v-V_F - e^{-t}(V_F-V_R)}{(1-e^{-2t})^{1/2}}$ as change of
  variables in the previous process.  Then we use equations
  \eqref{eq:bound-fp-derivative-2-2} and \eqref{eq:N-bound} so that
  the previous inequality implies
  \begin{equation*}
    \begin{split}
      \|\p_v e^{tA}u_0\|_2 \le Ct^{-1/2}\|u_0\|_2 + C\|u_0\|_2\int_{0}^{t}\left(t-s\right)^{-3/4}s^{-3/4}\d s \\
      = Ct^{-1/2}\|u_0\|_2 + Ct^{-1/2}\|u_0\|_2=Ct^{-1/2}\|u_0\|_2.
    \end{split}
  \end{equation*}
  which is exactly the inequality
  \eqref{eq:bound-linear-incomplete-derivative-2-2}.  If, otherwise,
  in expression \eqref{eq:linear-incomplete-norm-2-derivative} we use
  the inequalities \eqref{eq:bound-fp-derivative-2-derivative-2} and
  \eqref{eq:N-bound-derivative-2} we obtain
  \begin{equation*}
    \|\p_v e^{tA}u_0\|_2 \le C\|\p_vu_0\|_2 + C\|\p_vu_0\|_2\int_{0}^{t}\left(t-s\right)^{-3/4}s^{-1/4}\d s \le C\|\p_vu_0\|_2.
  \end{equation*}
  By directly solving the integral, which is constant in time, we get
  to inequality
  \eqref{eq:bound-linear-incomplete-derivative-2-derivative-2}.
	
  \medskip	
  \noindent
  \textbf{Step 4. Bounds on $\boldsymbol{u}$.} We now carry out similar estimates
  on the solution given in \eqref{eq:linear-incomplete}. First we take
  the supremum norm to get:
  \begin{equation*}
    \|u(.,t)\|_\infty
    =
    \|e^{tA}u_0\|_\infty
    \le
    \|e^{tF}u_0\|_\infty+\int_{0}^{t}\|\Psi_{t-s}(v-V_F,V_R-V_F)\|_\infty|N_{u}(s)| \d s,
  \end{equation*}
  now, using the definition of $\Psi$ (see \eqref{eq:Psi-kernel})
  $$
  \Phi_{t-s}(v-V_F,V_R-V_F)=\Psi_{t-s}(v-V_F,V_R-V_F)+\Phi_{t-s}(v-V_F,V_F-V_R) $$
  and the fact that three terms are nonnegative, we obtain
  $$
  \|\Psi_{t-s}(v-V_F,V_R-V_F)\|_\infty<\|\Phi_{t-s}(v-V_F,V_R-V_F)\|_\infty,
  $$ 
  and therefore
  \begin{equation*}
    \|e^{tA}u_0\|_\infty
    \le
    \|e^{tF}u_0\|_\infty+\int_{0}^{t}
    \|\Phi_{t-s}(v-V_F,V_R-V_F)\|_\infty|N_{u}(s)| \d s.
  \end{equation*}
  By using the explicit expression of the Fokker-Planck kernel
  $\Phi_{t-s}(v-V_F,V_R-V_F)$, together with the equations
  \eqref{eq:bound-fp-2-infty} and \eqref{eq:N-bound}, we find the
  following inequality:
  \begin{equation*}
    \|e^{tA}u_0\|_\infty \le Ct^{-1/4}\|u_0\|_2+C\|u_0\|_2\int_{0}^{t}s^{-3/4}(1-e^{-2(t-s)})^{-1/2}e^{-\frac{|v-V_F-e^{-t}(V_R-V_F)|^2}{2(1-e^{2(t-s)})}}\d s,
  \end{equation*}
  where we can apply Lemma
  \ref{lem:integrals} (considering $\tilde{v} \equiv v - V_F$,
  $\tilde{V}_R \equiv V_R - V_F$)  to bound the integral and obtain inequality
  \eqref{eq:bound-linear-incomplete-2-infinity}.
	
  Additionally, if we want to bound the norm infinity of the solution
  by the norm infinity of the initial condition, we can do as before,
  taking norm infinity in \eqref{eq:linear-incomplete}, but using
  equations \eqref{eq:bound-fp-infty-infty} and
  \eqref{eq:N-bound-infty}, which gives the inequality
  \eqref{eq:bound-linear-incomplete-infty-infty}.
  
  \
	
  \noindent Now we take norm $ L_2 $ of equation
  \eqref{eq:linear-incomplete}, resulting in
  \begin{equation*}
    \| e^{tA}u_0\|_2 \le \| e^{tF}u_0\|_2+\int_{0}^{t}\|\Psi_{t-s}(v-V_F,V_R-V_F)\|_2|N_{u}(s)|\d s,
  \end{equation*}
  where $ |N_{u}(s)| $ can be bounded by using again the equation
  \eqref{eq:N-bound} and $ \|\Psi_{t-s}(v-V_F,V_R-V_F)\|_2 $ can be directly
  bounded using:
  \begin{multline*}
    \|\Psi_{t-s}(v-V_F,V_R-V_F)\|_2^2\le
    \|\Phi_{t-s}(v-V_F,V_R-V_F)\|_2^2 =
    \\
    \frac{1}{2\pi(1-e^{-2(t-s)})}
    \int_{\R}e^{-\frac{|v-V_F-e^{-(t-s)}(V_R-V_F)|^2}{(1-e^{-2(t-s)})}}
    \d v
    \\
    = \frac{1}{2\pi\sqrt{1-e^{-2(t-s)}}}\int_{\R}e^{-w^2} \d w
    = \frac{1}{\sqrt{4\pi(1-e^{-2(t-s)})}},
  \end{multline*}
  having used the change of variables
  $ w=\frac{v-V_F-e^{-(t-s)}(V_R-V_F)}{(1-e^{-(t-s)})^{1/2}} $. Using this
  bound, for small $ t$, such that $t<1-e^{-2t}$,
  and \eqref{eq:bound-fp-2-2} for estimate $\|e^{tF}u_0\|_2$ and
  (see \eqref{eq:N-bound})
    $|N_{u}(t)| \le Ct^{-3/4}\|u_0\|_2$, we get:
  \begin{equation*}
    \| e^{tA}u_0\|_2 \le Ce^{t/2}\|u_0\|_2+C\|u_0\|_2\int_{0}^{t}(t-s)^{-1/4}s^{-3/4}\d s \le C\|u_0\|_2.
  \end{equation*}
  where we have directly solved the integral, using the change of
  variable $\tau=s/t$ and the beta function
  $\beta(x,y):=\int_0^1\tau^{x-1}(1-\tau)^{y-1} d\tau$, for
  $x,y\in \C$ with real part positive (we recall the relation with the
  gamma function,
  $\beta(x,y)=\frac{\Gamma(x)\Gamma(y)}{\Gamma(x+y)}$), to obtain
  inequality \eqref{eq:bound-linear-incomplete-2-2}. This completes
  the proof of all inequalities.
\end{proof}
We finish this subsection with the proof of the lemma used to prove
the previous result.
\begin{lem}
  \label{lem:integrals}
  For $t > 0$, $v \in (-\infty, 0]$, $\tilde{V}_R<0$ and $0 \leq \alpha < 1$, let us
  consider the following integrals:
  \begin{gather*}
    I_1 (v,t) := \int_{0}^{t}s^{-\alpha}
    (1-e^{-2(t-s)})^{-1/2}e^{-\frac{|v-e^{-(t-s)}\tilde{V}_R|^2}{2(1-e^{-2(t-s)})}}\d s,
    \\
    I_2 (v,t) := \int_{0}^{t}s^{-\alpha}
    (1-e^{-2(t-s)})^{-3/2} \, \left|v-e^{-(t-s)}\tilde{V}_R\right| \,
    e^{-\frac{|v-e^{-(t-s)}\tilde{V}_R|^2}{2(1-e^{-2(t-s)})}}\d s.
\end{gather*}
Then, there exists
$C = C(\alpha)$ such that
\begin{equation*}
  I_1(v,t) \le Ct^{\frac12-\alpha}
  \qquad \text{and} \qquad
  I_2(v,t) \le Ct^{-\alpha}
\end{equation*}
for $t$ small enough and $v \in (-\infty, 0]$.

\end{lem}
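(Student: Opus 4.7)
\noindent
\textbf{Plan for the proof of Lemma \ref{lem:integrals}.} Both estimates reduce to careful bookkeeping of singularities produced by the Ornstein--Uhlenbeck kernel. Throughout I write $r(u) := 1-e^{-2u}$ and $x(u) := v - e^{-u}\tilde V_R$, and use that there exist $c, u_0 > 0$ such that $c\, u \leq r(u) \leq 2u$ for all $u \in [0,u_0]$, so in particular $e^{-x^2/(2r)} \leq e^{-x^2/(4u)}$ on that range.

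The bound on $I_1$ is the easy one. I would simply use $e^{-|v - e^{-(t-s)}\tilde V_R|^2/(2r(t-s))} \leq 1$ and the lower bound $r(t-s) \geq c(t-s)$ to reduce to the beta-type integral
\[
I_1(v,t) \;\leq\; C \int_0^t s^{-\alpha}(t-s)^{-1/2}\,\d s
\;=\; C\, t^{1/2-\alpha}\, B(1-\alpha, 1/2),
\]
which gives the claimed bound for all $t \leq u_0$ uniformly in $v$.

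The bound on $I_2$ is more delicate: bounding $|x|e^{-x^2/(2r)} \leq \sqrt{r}\,e^{-1/2}$ pointwise turns the integrand into $r^{-1}$, which is \emph{not} integrable near $s=t$, so one must combine bounds. My plan is to split $I_2 = \int_0^{t/2} + \int_{t/2}^t$. On $s \in [0,t/2]$ the variable $u = t-s$ stays bounded below by $t/2$, so $r(u) \geq c\,t$ and the pointwise bound $r^{-3/2}|x|e^{-x^2/(2r)} \leq C r^{-1} \leq C t^{-1}$ applies; integrating $s^{-\alpha}$ against this yields a contribution of order $t^{-1} \cdot t^{1-\alpha} = t^{-\alpha}$. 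On $s \in [t/2,t]$ the weight satisfies $s^{-\alpha} \leq C t^{-\alpha}$, so after substituting $u = t-s$ it suffices to prove the uniform bound
\begin{equation*}
J(v,t) \;:=\; \int_0^{t/2} r(u)^{-3/2}\,|x(u)|\, e^{-x(u)^2/(2r(u))}\,\d u \;\leq\; C
\end{equation*}
independent of $v \leq 0$ and (small) $t$.

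The key observation for the estimate on $J$ is that, up to a factor of $\sqrt{2\pi}$, the integrand equals $|\p_v \Phi_u(v,\tilde V_R)|$, the time slice of the derivative of the OU kernel. Using $r(u) \geq c\,u$ and $r(u) \leq 2u$ on $[0,u_0]$, I would bound $J$ by a constant times $\int_0^{u_0} u^{-3/2}|x(u)|\, e^{-x(u)^2/(4u)}\,\d u$. The model computation to keep in mind is the explicit formula
\[
\int_0^\infty u^{-3/2}\,|a|\, e^{-a^2/(4u)}\,\d u \;=\; 2\sqrt{\pi}
\qquad (a \neq 0),
\]
obtained by the substitution $z = a^2/(4u)$, which is uniformly bounded in $a$. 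To pass from the model ($x \equiv a := v-\tilde V_R$) to our actual integrand, I would use $|x(u)-a| = |\tilde V_R|(1-e^{-u}) \leq |\tilde V_R|\,u$ and absorb the difference via elementary Gaussian inequalities (either comparing $x(u)^2$ to $a^2$ up to $O(u)$ terms that are controlled by the Gaussian, or splitting into the regime $|x(u)| \geq \sqrt{r(u)}$ --- where one uses $|x|e^{-x^2/(2r)} \leq C\sqrt{r}\,e^{-x^2/(4r)}$ --- and the complementary regime $|x(u)| \leq \sqrt{r(u)}$ --- where the tighter bound $|x|e^{-x^2/(2r)} \leq |x|$ is the correct one, producing integrable powers of $u$). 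The hard part of the proof is exactly this uniform-in-$v$ control: the delicate case is $v$ close to $\tilde V_R$, where the naive $r^{-1}$ pointwise bound is too crude, but direct computation shows the integrand is only of order $u^{-1/2}$ near $u=0$, which is integrable.
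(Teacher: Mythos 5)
Your treatment of $I_1$ coincides with the paper's: bound the Gaussian by $1$, use $1-e^{-2u}\geq c\,u$ for small $u$, and recognize the beta integral.

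For $I_2$ you take a genuinely different decomposition. The paper's first move is algebraic: it writes $|v - e^{-(t-s)}\tilde V_R| \leq |\tilde V_R|(t-s) + |v-\tilde V_R|$, which immediately converts one piece into an $I_1$-type integral and leaves the ``frozen'' piece with $|v-\tilde V_R|$ in front; for that piece the inequality $(a+b)^2 \geq \tfrac12 a^2 - b^2$ replaces the exponent by $e^{-(v-\tilde V_R)^2/(4(t-s))}$, after which the substitution $w = (v-\tilde V_R)^2/(t-s)$ does everything. You instead split the time interval at $t/2$, absorb $s^{-\alpha}\leq Ct^{-\alpha}$ on the second half, and reduce to a uniform-in-$v$ bound on $J(v,t)=\int_0^{t/2} r(u)^{-3/2}|x(u)|e^{-x(u)^2/(2r(u))}\,\d u$. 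That reduction is sound, and the model integral $\int_0^\infty u^{-3/2}|a|e^{-a^2/(4u)}\,\d u = 2\sqrt{\pi}$ is exactly the Gaussian identity that also powers the paper's argument (there it appears after the substitution $w=a^2/(t-s)$).

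Where your outline has a gap is in the second of the two proposed ways to bound $J$. In the regime $|x(u)|\leq\sqrt{r(u)}$ the bound $|x|e^{-x^2/(2r)}\leq|x|\leq\sqrt{r}$ gives $r^{-3/2}|x|\leq r^{-1}\sim u^{-1}$, which is \emph{not} an integrable power of $u$ near $u=0$, contrary to what is asserted. Integrability in this regime in fact comes from the extra structural information $|x(u)| \leq |a| + |\tilde V_R|u$ together with the observation that the regime forces $u\gtrsim a^2$ (so that $\int_{c a^2}^{u_0}|a|u^{-3/2}\,\d u$ is bounded independently of $a$), and the $|\tilde V_R|u$ contribution gives $u^{-1/2}$; neither step is in your sketch. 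Your first option (compare $x(u)^2$ to $a^2$ up to terms controlled by the Gaussian) is sound, but once you write it out it reproduces precisely the paper's inequality $(a+b)^2\geq\tfrac12 a^2 - b^2$ and the subsequent substitution, so it is not really an alternative. The plan is viable, but the heart of the $I_2$ estimate is left at the outline stage, and the $|x|\lessgtr\sqrt{r}$ route needs the extra bookkeeping above before it closes.
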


\begin{proof}
  For both cases we will consider a sufficiently small $t$ so that
  $t \leq (1-e^{-2t})$. In the case of $I_1$ we use this bound and
  estimate the exponential by $1$ to get
  \begin{equation*}
    I_1 \le \int_{0}^{t}s^{-\alpha}(t-s)^{-1/2}\d s
    = t^{1/2-\alpha}\frac{\Gamma(1-\alpha) \sqrt{\pi}}{\Gamma(3/2-\alpha)},
  \end{equation*}
  where the last equality is seen through the change of variables
  $\tau=s/t$, as explained above.

  Now we consider the second integral and we split it by using
  $v-e^{-t}\tilde{V}_R=(v-\tilde{V}_R)+\tilde{V}_R(1-e^{-t})$
  and therefore,
    $|v-e^{-t}\tilde{V}_R|
    \leq |\tilde{V}_R|t + |v-\tilde{V}_R|$
    (valid for all $t \geq 0$, since $1-e^{-t}\le t$):
  \begin{multline*}
    I_2\le\int_{0}^{t}s^{-\alpha}(t-s)^{-3/2}|v-e^{-(t-s)}\tilde{V}_R|
    e^{-\frac{|v-e^{-(t-s)}\tilde{V}_R|^2}{2(t-s)}}\d s
    \\
    \le
    |\tilde{V}_R| \int_{0}^{t}s^{-\alpha}(t-s)^{-1/2}
    e^{-\frac{|v-e^{-(t-s)}\tilde{V}_R|^2}{2(t-s)}}\d s
    \\
    + \int_{0}^{t}s^{-\alpha}(t-s)^{-3/2}|v-\tilde{V}_R|
    e^{-\frac{|v-e^{-(t-s)}\tilde{V}_R|^2}{2(t-s)}} \d s
    =: I_{2.1} + I_{2.2}.
  \end{multline*}
  The first integral, $I_{2,1}$, can be bounded as was done for $I_1$
  by $C t^{1/2-\alpha}$.

  For the second one we use the
  inequality $(a+b)^2 \geq \frac12 a^2 - b^2$ (valid for any real
  $a,b$) to get
  \begin{multline*}
    (v-e^{-(t-s)}\tilde{V}_R)^2 = ((v - \tilde{V}_R) + \tilde{V}_R
    (1 - e^{-(t-s)}))^2
    \\
    \geq \frac12 (v-\tilde{V}_R)^2 - \tilde{V}_R^2 (1 - e^{-(t-s)})^2
    \geq \frac12 (v-\tilde{V}_R)^2 - \tilde{V}_R^2 (t-s)^2.
  \end{multline*}
  As a consequence, for $t$ small, we have
  \begin{equation*}
    e^{ {-\frac{(v-e^{-(t-s)}\tilde{V}_R)^2}{2(t-s)}} }
    \leq
    e^{ {-\frac{(v-\tilde{V}_R)^2}{4(t-s)}} }
    e^{ \frac12 \tilde{V}_R^2(t-s) }
    \lesssim
    e^{ {-\frac{(v-\tilde{V}_R)^2}{4(t-s)}} }.
  \end{equation*}
  We can use this bound on the term $I_{2,2}$ and obtain
  \begin{equation*}
    I_{2.2} \lesssim \int_{0}^{t}s^{-\alpha}(t-s)^{-3/2}|v-\tilde{V}_R| e^{
      {-\frac{(v-\tilde{V}_R)^2}{4(t-s)}} } \d s.
  \end{equation*}
  Now we use the change of variables $w = (v-\tilde{V}_R)^2 / (t-s)$ and
  call $a := (v-\tilde{V}_R)^2$. After standard computations we have
  \begin{equation*}
    I_{2.2} \lesssim
    \int_{\frac{a}{t}}^{\infty} \left(t-\frac{a}{w}\right)^{-\alpha}
    w^{-\frac{1}{2}}
    e^{-\frac{w}{4}} \d w
    =
    t^{-\alpha}
    \int_{\frac{a}{t}}^{\infty} \left(w-\frac{a}{t}\right)^{-\alpha}
    w^{\alpha-\frac{1}{2}}
    e^{-\frac{w}{4}} \d w.
  \end{equation*}
  Now, if $\alpha \geq \frac12$ we write
  \begin{multline*}
    t^{-\alpha}
    \int_{\frac{a}{t}}^{\infty} \left(w-\frac{a}{t}\right)^{-\alpha}
    w^{\alpha-\frac{1}{2}}
    e^{-\frac{w}{4}} \d w
    \lesssim
    t^{-\alpha}
    \int_{\frac{a}{t}}^{\infty} \left(w-\frac{a}{t}\right)^{-\alpha}
    e^{-\frac{w}{8}} \d w
    \\
    \leq
    t^{-\alpha}
    \int_{\frac{a}{t}}^{\infty} \left(w-\frac{a}{t}\right)^{-\alpha}
    e^{-\frac{1}{8}\left( w-\frac{a}{t} \right)} \d w
    =
    t^{-\alpha}
    \int_{0}^{\infty} w^{-\alpha}
    e^{-\frac{w}{8}} \d w
    = C t^{-\alpha} \Gamma(1-\alpha).
  \end{multline*}
  Otherwise, if $0 \leq \alpha < 1/2$, we use that
  $w^{\alpha-\frac{1}{2}} \leq
  \left(w-\frac{a}{t}\right)^{\alpha-\frac{1}{2}}$ to write
  \begin{multline*}
    t^{-\alpha}
    \int_{\frac{a}{t}}^{\infty} \left(w-\frac{a}{t}\right)^{-\alpha}
    w^{\alpha-\frac{1}{2}}
    e^{-\frac{w}{4}} \d w
    \leq
    t^{-\alpha}
    \int_{\frac{a}{t}}^{\infty}
    \left(w-\frac{a}{t}\right)^{-\frac{1}{2}}
    e^{-\frac{1}{4}\left(w-\frac{a}{t}\right)} \d w
    \\
    =
    t^{-\alpha}
    \int_{0}^{\infty}
    w^{-\frac{1}{2}}
    e^{-\frac{w}{4}} \d w
    = C t^{-\alpha} \Gamma(1/2).
  \end{multline*}
  In any of the two cases we obtain
  \begin{equation*}
    I_{2,2} \lesssim C t^{-\alpha}
    \qquad \text{for $t$ small enough.}
  \end{equation*}
  Together with the bound for $I_{2,1}$ we obtain the stated result.
\end{proof}

As in the previous section, we also need Gaussian estimates for the
semigroup associated to the operator $A$:

\begin{lem}[$L^2$ Gaussian estimates including the source term]
  \label{lem:FP-delta-Gaussian-L2-estimates}
  Take $v_0 \in \R$ and let $\varphi(v) := \exp((v-v_0)^2 / 2)$.
  There exists $C > 0$ and $t_0 > 0$ depending only on $v_0$ and $V_F$
  such that
  \begin{equation}
    \label{eq:FP-delta-Gaussian-estimate-1}
    \| \p_v e^{tA} u_0 \|_{L^2(\varphi)}
    \leq C \| \p_v u_0 \|_{L^2(\varphi)}
    \qquad
    \text{for all $0 \leq t \leq t_0$}
  \end{equation}
  and
  \begin{equation}
    \label{eq:FP-delta-Gaussian-estimate-2}
    \| \p_v e^{tA} u_0 \|_{L^2(\varphi)}
    \leq C t^{-1/2} \| u_0 \|_{L^2(\varphi)}
    \qquad
    \text{for all $0 < t \leq t_0$,}
  \end{equation}
  both for any $u_0 \in X$.
\end{lem}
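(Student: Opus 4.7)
The plan is to adapt the Duhamel plus Gronwall argument of Lemma \ref{lem:F-L1} to the weighted $L^2(\varphi)$ setting, using the Gaussian estimates of Lemma \ref{lem:FP-Gaussian-L2-estimates} for the Fokker-Planck part and treating the source term separately. Starting from the Duhamel representation \eqref{eq:linear-incomplete}, I differentiate in $v$ to obtain \eqref{eq:linear-incomplete-derivative}, and then take the $L^2(\varphi)$ norm on $(-\infty, V_F]$. Applying Minkowski's integral inequality to the convolution in $s$ gives
\begin{equation*}
  \|\p_v e^{tA} u_0\|_{L^2(\varphi)}
  \le \|\p_v e^{tF} u_0\|_{L^2(\varphi)}
  + \int_0^t \|\p_v \Psi_{t-s}(\cdot - V_F, V_R - V_F)\|_{L^2(\varphi)}\, |N_u(s)| \d s.
\end{equation*}
Lemma \ref{lem:FP-Gaussian-L2-estimates} directly controls the first term on the right by $C\|\p_v u_0\|_{L^2(\varphi)}$ for \eqref{eq:FP-delta-Gaussian-estimate-1} and by $C t^{-1/2}\|u_0\|_{L^2(\varphi)}$ for \eqref{eq:FP-delta-Gaussian-estimate-2}.

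The next step is a kernel estimate: I claim that for $t_0$ small enough (depending on $v_0$ and $V_F$) one has
\begin{equation*}
  \|\p_v \Psi_{t-s}(\cdot - V_F, V_R - V_F)\|_{L^2(\varphi)} \le C (t-s)^{-3/4},
  \qquad 0 < t-s \le t_0.
\end{equation*}
This is proved by the same change of variables as in \eqref{eq:3}: using the explicit formula \eqref{eq:fp-kernel-derivative} for $\p_v \Phi_{t-s}$ (and the reflected term entering $\Psi$), the squared kernel is a Gaussian in $v$ centered near $V_R$ with variance of order $(t-s)$, multiplied by $(t-s)^{-3}$ times a quadratic factor. For $t-s$ small, this narrow Gaussian dominates the much slower growth of $\varphi(v) = \exp((v-v_0)^2/2)$, so merging the two exponentials absorbs the weight into a constant that depends only on $v_0$, $V_R$, $V_F$, and a direct Gaussian integration yields the $(t-s)^{-3/2}$ bound on the squared norm.

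For the firing rate I reuse the bounds from Step 1 of the proof of Lemma \ref{lem:F-L1}. Since $\varphi \ge 1$ everywhere we have $\|f\|_2 \le \|f\|_{L^2(\varphi)}$, so \eqref{eq:N-bound-derivative-2} gives $|N_u(s)| \le C s^{-1/4}\|\p_v u_0\|_{L^2(\varphi)}$ and \eqref{eq:N-bound} gives $|N_u(s)| \le C s^{-3/4}\|u_0\|_{L^2(\varphi)}$. Putting these into the Minkowski bound together with the kernel estimate produces the two integrals
\begin{equation*}
  \int_0^t (t-s)^{-3/4} s^{-1/4}\d s = B\left(\tfrac14,\tfrac34\right),
  \qquad
  \int_0^t (t-s)^{-3/4} s^{-3/4}\d s = t^{-1/2} B\left(\tfrac14,\tfrac14\right),
\end{equation*}
via the substitution $s = t\tau$, yielding exactly the two claimed estimates \eqref{eq:FP-delta-Gaussian-estimate-1} and \eqref{eq:FP-delta-Gaussian-estimate-2}.

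The main technical obstacle is the kernel estimate: one must carefully track the competition between the variance $(t-s)$ of the heat-type Gaussian and the growth of $\varphi$ at infinity. This is why the restriction to small $t \le t_0$ is needed, and why the constant $C$ depends on $v_0$. The argument is entirely analogous in spirit to the discussion following \eqref{eq:G3} in the proof of Lemma \ref{lem:FP-Gaussian-L2-estimates}, so I would reduce the kernel bound to essentially the same calculation, rather than repeating it from scratch. Once the kernel bound is in place, the assembly via Minkowski and the beta-function identities is routine and mirrors the structure of Step 4 of the proof of Lemma \ref{lem:F-L1}.
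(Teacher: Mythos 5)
Your proposal is correct and follows essentially the same route as the paper: Duhamel's formula, the weighted $L^2(\varphi)$ Gaussian estimates from Lemma~\ref{lem:FP-Gaussian-L2-estimates} for the homogeneous part, the kernel bound $\|\p_v\Psi_{t}(\cdot,V_R)\|_{L^2(\varphi)}\lesssim t^{-3/4}$, the firing-rate bounds \eqref{eq:N-bound} and \eqref{eq:N-bound-derivative-2}, and Beta-function identities. The two small additions you make, namely spelling out that $\varphi\geq 1$ so $\|\cdot\|_2\leq\|\cdot\|_{L^2(\varphi)}$, and giving the variance-versus-weight heuristic behind the kernel estimate that the paper relegates to ``along the lines of \eqref{eq:3}'', are both correct and harmless clarifications rather than a different approach.
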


\begin{proof}
  For simplicity we show the estimates for $V_F = 0$, since the
  general case follows also from very similar calculations. Let us
  first show \eqref{eq:FP-delta-Gaussian-estimate-1}. We take the
  $L^2(\varphi)$ norm in \eqref{eq:linear-incomplete-derivative} and
  obtain similar bound as in
  \eqref{eq:linear-incomplete-norm-2-derivative}:
  \begin{equation*}
    \|\p_v e^{tA}u_0\|_{L^2(\varphi)} \le
    \|\p_v e^{tF}u_0\|_{L^2(\varphi)}
    +
    \int_{0}^{t} \|\p_v\Psi_{t-s}(v,V_R)\|_{L^2(\varphi)}
    |N_{u}(s)|\d s.
  \end{equation*}
  An explicit calculation along the lines of \eqref{eq:3} shows that
  \begin{equation*}
    \|\p_v\Psi_{t}(v, V_R)\|_{L^2(\varphi)}^2
    \lesssim
    t^{-3/2}.
  \end{equation*}
  Using this bound, \eqref{eq:N-bound-derivative-2} and
  \eqref{eq:FP-Gaussian-estimate-1} from Lemma
  \ref{lem:FP-Gaussian-L2-estimates} we have
  \begin{equation*}
    \|\p_v e^{tA}u_0\|_{L^2(\varphi)}
    \lesssim
    \|\p_v u_0\|_{L^2(\varphi)}
    +
    \| \p_v u_0 \|_{L^2(\varphi)} \int_{0}^{t} (t-s)^{-\frac34}
    s^{-\frac14} \d s
    \lesssim
    \|\p_v u_0\|_{L^2(\varphi)},
  \end{equation*}
  valid for all times $t \geq 0$ small enough. This shows
  \eqref{eq:FP-delta-Gaussian-estimate-1}.

  In order to prove \eqref{eq:FP-delta-Gaussian-estimate-2} we follow
  the same path but use \eqref{eq:N-bound} and
  \eqref{eq:FP-Gaussian-estimate-2} instead of
  \eqref{eq:N-bound-derivative-2} and
  \eqref{eq:FP-Gaussian-estimate-1}, respectively, to get
  \begin{equation*}
    \|\p_v e^{tA}u_0\|_{L^2(\varphi)}
    \lesssim
    t^{-\frac12} \|u_0\|_{L^2(\varphi)}
    +
    \| u_0 \|_{L^2(\varphi)} \int_{0}^{t} (t-s)^{-\frac34}
    s^{-\frac34} \d s
    \lesssim t^{-\frac12} \|u_0\|_{L^2(\varphi)}.
  \end{equation*}
  This completes the proof.  
\end{proof}

\subsubsection{Bounds for the complete linear equation}
\label{sec:complete}
Finally, using the bounds proved in Lemma \ref{lem:F-L1} we can obtain
the bounds for the complete linear equation.
\begin{lem}
  \label{lem:bounds-complete}
  Let us consider $u_0 \in X$ any initial condition to the equation:
  \begin{equation*}
    \p_tu(v,t)=\p_v^2u(v,t)+\p_v\left(vu(v,t)\right) + N_{u}(t)\delta_{V_R} -
    bN\p_v u(v,t)=:Lu(v,t)
  \end{equation*}
  in the domain $ v\in\left(-\infty,V_F\right) $, $ t\ge0 $, with the
  boundary condition $ u(V_F,t)=0 $ and whose associated semi-group is
  $ e^{tL} $. We recall that $b$ is the connectivity parameter and $N$
  is considered constant here.  Then there exist $C \geq 1$ and
  $t_0 > 0$ depending only on $N$, $V_R$ and $V_F$ such that for all
  $u_0 \in X$ and all $0 \leq t \leq t_0$ the following well-posedness
  inequalities hold:
  \begin{align}
	\label{eq:bound-linear-derivative-2-derivative-2}
	\| \p_ve^{tL} u_0 \|_2 &\leq C \|\p_vu_0\|_2,
    \\
	\label{eq:bound-linear-derivative-infty-infty}
	\| \p_ve^{tL} u_0 \|_\infty &\leq C \| \p_vu_0\|_\infty,
	\\
	\label{eq:bound-linear-infty-infty}
	\| e^{tL} u_0 \|_\infty &\leq C \|u_0\|_\infty,
	\\
	\label{eq:bound-linear-2-2}
	\| e^{tL} u_0 \|_2 &\leq C \|u_0\|_2,
  \end{align}
  and the following ``regularization'' inequalities hold for all $0 <
  t \leq t_0$:
  \begin{align}
	\label{eq:bound-linear-complete-derivative-2-2}
	\| \p_ve^{tL} u_0 \|_2 &\leq C t^{-1/2} \|u_0\|_2,
	\\
	\label{eq:bound-linear-complete-derivative-2-infinity}
	\| \p_ve^{tL} u_0 \|_\infty &\leq C t^{-3/4} \|u_0\|_2,
	\\
	\label{eq:bound-linear-infty-infty-derivative}
	\| \p_ve^{tL} u_0 \|_\infty &\leq Ct^{-1/2} \|u_0\|_\infty,
	\\
	\label{eq:bound-linear-complete-2-infinity}
	\| e^{tL} u_0 \|_\infty &\leq C t^{-1/4} \|u_0\|_2.
  \end{align}
\end{lem}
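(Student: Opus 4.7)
The plan is to treat the full linear equation as a perturbation of the equation $\p_t u = A u$ studied in Lemma~\ref{lem:F-L1}, where $A := F + L_1$, by the constant-coefficient transport $L_2 u := (V_F - bN)\p_v u$. Writing $c := V_F - bN$, Duhamel's formula reads
\begin{equation*}
  e^{tL} u_0 \;=\; e^{tA} u_0 \;+\; c \int_0^t e^{(t-s)A}\, \p_v e^{sL} u_0 \,\d s,
\end{equation*}
so the first term is controlled directly by the matching bound from Lemma~\ref{lem:F-L1}, and the job is to control the derivative $\p_v e^{sL} u_0$ appearing inside the integral.

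I would first prove the five bounds involving $\p_v e^{tL} u_0$, namely \eqref{eq:bound-linear-derivative-2-derivative-2}, \eqref{eq:bound-linear-derivative-infty-infty}, \eqref{eq:bound-linear-complete-derivative-2-2}, \eqref{eq:bound-linear-complete-derivative-2-infinity} and \eqref{eq:bound-linear-infty-infty-derivative}. For each of them, differentiating the Duhamel formula and applying to both terms the estimate of Lemma~\ref{lem:F-L1} with matching norms yields an inequality of the form
\begin{equation*}
  g(t) \;\leq\; C\,\omega(t)\,\|u_0\|_{\ast}
  \;+\;
  C \int_0^t (t-s)^{-1/2} g(s)\,\d s,
\end{equation*}
where $g(t)$ is the $L^\infty$ or $L^2$ norm of $\p_v e^{tL} u_0$, $\|\cdot\|_\ast$ is one of the norms in the statement, and $\omega(t) \in \{1,\,t^{-1/2},\,t^{-3/4}\}$ is the expected rate. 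The singular kernel $(t-s)^{-1/2}$ appears because $L_2$ contains a derivative, so the correct integrand bound from Lemma~\ref{lem:F-L1} has the shape $\|\p_v e^{\tau A} w\|_\bullet \lesssim \tau^{-1/2}\|w\|_\bullet$. The above inequality is then closed by the singular-kernel Gronwall lemma~\ref{lem:gronwall-continuous} already invoked in the proof of Lemma~\ref{lem:F-L1}, producing $g(t)\lesssim \omega(t)\|u_0\|_\ast$ for $t$ sufficiently small.

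Given the $\p_v$ bounds, the three remaining inequalities \eqref{eq:bound-linear-infty-infty}, \eqref{eq:bound-linear-2-2} and \eqref{eq:bound-linear-complete-2-infinity} follow by plugging the just-proven $\|\p_v e^{sL} u_0\|_\bullet$ estimates into the Duhamel formula and applying the non-derivative bounds of Lemma~\ref{lem:F-L1} to the semigroup $e^{(t-s)A}$. In each case the resulting $s$-integral is finite on small intervals (for instance of the form $\int_0^t (t-s)^{-1/4} s^{-1/2}\,\d s$, a Beta-type integral), and it produces a term of the same order as, or smaller than, the direct Duhamel term, so no further Gronwall step is required.

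The main subtlety is purely the book-keeping of matching norms: for each of the eight inequalities one must select the bound from Lemma~\ref{lem:F-L1} whose left-hand and right-hand norms are precisely the ones needed to close a Gronwall inequality with a weakly singular kernel. The restriction $t \in [0,t_0]$ is dictated both by the singular Gronwall step and by the short-time validity of the estimates of Lemma~\ref{lem:F-L1}; as noted in Remark~\ref{rem:iterate}, the well-posedness bounds can subsequently be iterated to all positive times at the cost of an exponential factor.
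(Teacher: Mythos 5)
Your proposal matches the paper's proof essentially step for step: the paper also writes $L = A + L_2$ with $L_2 u = (V_F - bN)\p_v u$, invokes the same Duhamel formula, first establishes the five $\p_v e^{tL}$ estimates by coupling the matching bound from Lemma~\ref{lem:F-L1} with the short-time singular Gronwall Lemma~\ref{lem:gronwall-continuous}, and then obtains the three function-level estimates by feeding the derivative bounds into the Duhamel integral and evaluating the resulting Beta-type integrals. The only cosmetic difference is that for \eqref{eq:bound-linear-complete-derivative-2-infinity} the paper substitutes the already-proven \eqref{eq:bound-linear-complete-derivative-2-2} inside the integral and computes directly rather than running a second Gronwall, but your unified treatment with the $(t-s)^{-1/2}$ kernel closes just as well (one checks $\alpha-\beta-\gamma+1 = 1/2>0$ in Lemma~\ref{lem:gronwall-continuous}).
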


\begin{proof}
  We use the splitting \eqref{eq:linear-PDE-2} for the PDE,
  $Lu=Fu+L_1u+L_2u$, which is rewritten as $Lu=Au+L_2u$ and we recall
  that $L_2u=\p_v[(V_F-bN)u]$ and $V_F-bN$ is a constant. Again, to
  prove the lemma, we use Duhamel's formula for the solution and its
  derivative:
   \begin{gather}
    \label{eq:linear-complete}
    u(v,t) = e^{tL}u_0(v) =
    e^{tA}u_0(v) +
    (V_F - bN)\int_0^te^{(t-s)A}\left( \p_v u(v,s)\right)\d s,
    \\
    \label{eq:linear-complete-derivative}
    \p_v u(v,t) =
    \p_ve^{tL}u_0(v)=\p_ve^{tA}u_0(v)+
    (V_F - bN) \int_0^t\p_ve^{(t-s)A}\left(\p_v u(v,s)\right)\d s,
  \end{gather}
   and  bounds obtained Lemma \ref{lem:F-L1} for the semigroup $e^{At}$.
   
  \medskip
  \noindent
  \textbf{Step 1. Bounds on $\boldsymbol{\p_v u}$.} If we take the
  $ L^2 $ norm in equation \eqref{eq:linear-complete-derivative} we
  obtain the following inequality:
  \begin{equation}\label{eq:complete-derivative-norm-2}
    \|\p_ve^{tL}u_0\|_2 \le \|\p_ve^{tA}u_0\|_2+C\int_0^t\|\p_v e^{(t-s)A}\left(\p_v u(.,s)\right)\|_2\d s,
  \end{equation}
  which, directly using inequality \eqref{eq:bound-linear-incomplete-derivative-2-2}, $ \|\p_ve^{tA} u_0 \|_2 \leq C t^{-1/2} \|u_0\|_2 $, implies
  \begin{equation*}
    \|\p_ve^{tL}u_0\|_2 \le Ct^{-1/2}\|u_0\|_2+C\int_{0}^{t}\left(t-s\right)^{-1/2}\|\p_v u(.,s)\|_2\d s.
  \end{equation*}
  After that, taking into account that $ \|\p_v u(.,s)\|_2 $ can be also written as $ \|\p_v e^{sL}u_0\|_2 $, we use the change of variables $ z(t):=t^{1/2}\|\p_ve^{tL}u_0\|_2 $ to write the previous expression as follows:
  \begin{equation*}
    z(t) \le C\|u_0\|_2+Ct^{1/2}\int_{0}^{t}\left(t-s\right)^{-1/2}s^{-1/2}z(s)\d s,
  \end{equation*}
  where the application of the Lemma \ref{lem:gronwall-continuous}
  with $ \alpha=\beta=\gamma=1/2 $ gives us the proof for the
  inequality \eqref{eq:bound-linear-complete-derivative-2-2}. Another
  way of bounding the l.h.s of \eqref{eq:complete-derivative-norm-2}
  consists of using inequality
  \eqref{eq:bound-linear-incomplete-derivative-2-derivative-2},
  $ \| \p_ve^{tA} u_0 \|_2 \leq C \|\p_vu_0\|_2 $, in the first term
  of the r.h.s and \eqref{eq:bound-linear-incomplete-derivative-2-2},
  $ \| \p_ve^{tA} u_0 \|_2 \leq C t^{-1/2} \|u_0\|_2 $, in the second
  one, obtaining
  \begin{equation*}
    \|\p_ve^{tL}u_0\|_2 \le C\|\p_vu_0\|_2+C\int_{0}^{t}\left(t-s\right)^{-1/2}\|\p_v e^{sL}u_0\|_2\d s,
  \end{equation*}
  which proves inequality
  \eqref{eq:bound-linear-derivative-2-derivative-2} after the
  application of Lemma \ref{lem:gronwall-continuous} with
  $ \alpha=0,\beta=1/2 $ and $ \gamma=0 $.
  
  \
	
  Now we are going to bound equation
  \eqref{eq:linear-complete-derivative} by taking norm $ L^\infty $ on
  it, which leads to
  \begin{equation}\label{eq:linear-complete-derivative-norm-infinity}
    \|\p_ve^{tL}u_0\|_\infty \le \|\p_ve^{tA}u_0\|_\infty+C\int_0^t\|\p_ve^{(t-s)A}\left(\p_v u(.,s)\right)\|_\infty \d s,
  \end{equation}
  where, using equation \eqref{eq:bound-linear-incomplete-derivative-2-infinity}, $ \|\p_ve^{tA} u_0 \|_{\infty} \leq C t^{-3/4} \|u_0\|_2 $, we obtain
  \begin{equation*}
    \|\p_ve^{tL}u_0\|_\infty \le Ct^{-3/4}\|u_0\|_2+C\int_{0}^{t}\left(t-s\right)^{-3/4}\|\p_v e^{sL}u_0\|_2\d s.
  \end{equation*}
  Finally, by using the inequality \eqref{eq:bound-linear-complete-derivative-2-2}, $ \| \p_ve^{tL} u_0 \|_2 \leq C t^{-1/2} \|u_0\|_2 $, and by integrating the second term in the r.h.s, we prove inequality \eqref{eq:bound-linear-complete-derivative-2-infinity} as follows:
  \begin{equation*}
    \begin{split}
      \|\p_ve^{tL}u_0\|_\infty \le Ct^{-3/4}\|u_0\|_2+C\int_{0}^{t}\left(t-s\right)^{-3/4}s^{-1/2}\| u_0\|_2\d s \\
      =Ct^{-3/4}\|u_0\|_2+Ct^{-1/4}\|u_0\|_2 \le Ct^{-3/4}\|u_0\|_2.
    \end{split}
  \end{equation*}
  If, instead of performing the latter procedure, we apply bound
  \eqref{eq:bound-linear-incomplete-infty-infty-derivative},
  $ \| \p_ve^{tA} u_0 \|_\infty \leq Ct^{-1/2} \|u_0\|_\infty $, to
  inequality \eqref{eq:linear-complete-derivative-norm-infinity}, we
  find
  \begin{equation*}
    \|\p_ve^{tL}u_0\|_\infty \le Ct^{-1/2}\|u_0\|_\infty+C\int_{0}^{t}(t-s)^{-1/2}\|\p_{v} e^{sL}u_0\|_\infty \d s,
  \end{equation*}
  which we can equivalently write, through the use of the change of variables $ z(t):=t^{1/2}\|\p_ve^{tL}u_0\|_\infty $, as
  \begin{equation*}
    z(t)\le C\|u_0\|_\infty + Ct^{1/2}\int_{0}^{t}(t-s)^{-1/2}s^{-1/2}z(s)\d s
  \end{equation*}
  where, the application of Lemma \ref{lem:gronwall-continuous} with
  $ \alpha=\beta=\gamma = 1/2 $, implies inequality
  \eqref{eq:bound-linear-infty-infty-derivative}.
  
  \
	
  As a last estimate for the inequality
  \eqref{eq:linear-complete-derivative-norm-infinity}, we apply the
  bounds \eqref{eq:bound-linear-incomplete-derivative-infty-infty},
  $ \| \p_ve^{tA} u_0 \|_\infty \leq C \| \p_vu_0\|_\infty $, in the
  first term on the r.h.s and
  \eqref{eq:bound-linear-incomplete-infty-infty-derivative},
  $ \| \p_ve^{tA} u_0 \|_\infty \leq Ct^{-1/2} \|u_0\|_\infty $, in
  the second one, so that we obtain the following expression:
  \begin{equation*}
    \|\p_ve^{tL}u_0\|_\infty \le C\|\p_vu_0\|_\infty+C\int_{0}^{t}(t-s)^{-1/2}\|\p_{v} e^{sL}u_0\|_\infty \d s,
  \end{equation*}
  which, proceeding as in the last case, through the same change of
  variable, and applying Lemma \ref{lem:gronwall-continuous} as well,
  implies inequality \eqref{eq:bound-linear-derivative-infty-infty}.

  \medskip
  \noindent
  \textbf{Step 2. Bounds on $\boldsymbol{u}$.} Now that we have computed all the
  estimates for the derivative of the solution to the linear problem,
  we are able to control the second term of the Duhamel's formula
  given by equation \eqref{eq:linear-complete}. First we apply the
  norm $ L^\infty $ on it, getting to the following inequality:
  \begin{equation}\label{eq:complete-norm-infinity}
    \|e^{tL}u_0\|_\infty \le \|e^{tA}u_0\|_\infty+C\int_0^t\|e^{(t-s)A}\left(\p_v e^{sL}u_0\right)\|_\infty \d s,
  \end{equation}
  where both terms in the r.h.s can be bounded by using the inequality
  \eqref{eq:bound-linear-incomplete-2-infinity},
  $ \| e^{tA} u_0 \|_\infty \leq C t^{-1/4} \|u_0\|_2 $, leading to
  the following expression:
  \begin{equation*}
    \|e^{tL}u_0\|_\infty \le Ct^{-1/4}\|u_0\|_2+C\int_{0}^{t}(t-s)^{-1/4}\|\p_v e^{sL}u_0\|_2\d s.
  \end{equation*}
  and then, given the impossibility of completing the estimate, by
  having inside the integral a different norm that the one in the
  l.h.s of the inequality, we need to control the integral term by
  applying the bound \eqref{eq:bound-linear-complete-derivative-2-2},
  $ \| \p_ve^{tL} u_0 \|_2 \leq C t^{-1/2} \|u_0\|_2 $, so that we
  get:
  \begin{equation*}
    \|e^{tL}u_0\|_\infty \le Ct^{-1/4}\|u_0\|_2+C\int_{0}^{t}(t-s)^{-1/4}s^{-1/2}\|u_0\|_2\d s=Ct^{-1/4}\|u_0\|_2,
  \end{equation*}
  where we solved the integral and we put the constants together, in
  order to proof the inequality
  \eqref{eq:bound-linear-complete-2-infinity}.
	
  \
	
  Going backwards and starting again from inequality
  \eqref{eq:complete-norm-infinity} the application of the estimate
  \eqref{eq:bound-linear-incomplete-infty-infty},
  $ \| e^{tA} u_0 \|_\infty \leq C \|u_0\|_\infty $, gives
  \begin{equation*}
    \|e^{tL}u_0\|_\infty \le C\|u_0\|_\infty+C\int_{0}^{t}\|\p_v e^{sL}u_0\|_\infty \d s,
  \end{equation*}
  where, using the inequality
  \eqref{eq:bound-linear-infty-infty-derivative},
  $\| \p_ve^{tL} u_0 \|_\infty \leq Ct^{-1/2} \|u_0\|_\infty$ and
  Lemma \ref{lem:gronwall-continuous} with $\alpha=\beta=0$ and
  $\gamma=1/2$ proves the inequality
  \eqref{eq:bound-linear-infty-infty}.

  \medskip
  
  To show the last estimate, let us take the $L^2$ norm in equation
  \eqref{eq:linear-complete}, getting to
  \begin{equation*}
    \|e^{tL}u_0\|_2 \le \|e^{tA}u_0\|_2+C\int_0^t\|e^{(t-s)A}\left(\p_v e^{sL}u_0\right)\|_2 \d s.
  \end{equation*}
  First we use the inequality \eqref{eq:bound-linear-incomplete-2-2},
  $ \| e^{tA} u_0 \|_2 \leq C \|u_0\|_2 $, in both terms of the r.h.s
  and then we apply the formula
  \eqref{eq:bound-linear-complete-derivative-2-2},
  $ \| \p_ve^{tL} u_0 \|_2 \leq C t^{-1/2} \|u_0\|_2 $, to the term
  inside the integral, as it is shown below:
  \begin{equation*}
    \begin{split}
      \|e^{tL}u_0\|_2 \le C\|u_0\|_2+C\int_0^t\|\p_v u(.,s)\|_2 \d s \\
	\le C\|u_0\|_2+C\| u_0\|_2\int_0^ts^{-1/2}\d s \le C\| u_0\|_2,
      \end{split}		
    \end{equation*}
    thus proving inequality \eqref{eq:bound-linear-2-2}.
\end{proof}

Finally we give some Gaussian bounds for the complete linear equation:

\begin{lem}[$L^2$ Gaussian estimates for the full linear equation]
  \label{lem:LIF-Gaussian-L2-estimates}
  Take $v_0 \in \R$ and let $\varphi(v) := \exp((v-v_0)^2 / 2)$.
  There exist $C > 0$ and $t_0 > 0$ depending only on $N$, $v_0$, $V_R$ and $V_F$
  such that
  \begin{equation}
    \label{eq:LIF-Gaussian-estimate-1}
    \| \p_v e^{tL} u_0 \|_{L^2(\varphi)}
    \leq C \| \p_v u_0 \|_{L^2(\varphi)}
    \qquad
    \text{for all $0 \leq t \leq t_0$}.
  \end{equation}
  and
  \begin{equation}
    \label{eq:LIF-Gaussian-estimate-2}
    \| \p_v e^{tL} u_0 \|_{L^2(\varphi)}
    \leq C t^{-1/2} \| u_0 \|_{L^2(\varphi)}
    \qquad
    \text{for all $0 < t \leq t_0$,}
  \end{equation}
  both for any $u_0 \in X$.
\end{lem}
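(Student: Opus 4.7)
The plan is to mirror the strategy used in the proof of Lemma \ref{lem:bounds-complete} (bounds for the complete linear equation), adapting it to the weighted $L^2(\varphi)$ setting by substituting the Gaussian-weighted bounds for the semigroup $e^{tA}$ established in Lemma \ref{lem:FP-delta-Gaussian-L2-estimates}. The starting point is the same Duhamel splitting $L = A + L_2$ with $L_2 u = \p_v[(V_F - bN) u]$, which yields
\begin{equation*}
  \p_v e^{tL} u_0 = \p_v e^{tA} u_0 + (V_F - bN) \int_0^t \p_v e^{(t-s)A}\left( \p_v e^{sL} u_0 \right) \d s.
\end{equation*}
Taking the $L^2(\varphi)$ norm and using the triangle inequality reduces the proof to controlling each piece on the right.

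For the first estimate \eqref{eq:LIF-Gaussian-estimate-1}, I would bound the free term by \eqref{eq:FP-delta-Gaussian-estimate-1}, which yields $\| \p_v e^{tA} u_0 \|_{L^2(\varphi)} \lesssim \|\p_v u_0\|_{L^2(\varphi)}$, and bound the integrand by \eqref{eq:FP-delta-Gaussian-estimate-2} applied to $w := \p_v e^{sL} u_0$, giving $\| \p_v e^{(t-s)A} w \|_{L^2(\varphi)} \lesssim (t-s)^{-1/2} \|w\|_{L^2(\varphi)}$. Setting $f(t) := \| \p_v e^{tL} u_0 \|_{L^2(\varphi)}$, this yields the Volterra-type inequality
\begin{equation*}
  f(t) \leq C \|\p_v u_0\|_{L^2(\varphi)} + C \int_0^t (t-s)^{-1/2} f(s) \d s,
\end{equation*}
on which I would apply the singular Gronwall inequality (Lemma \ref{lem:gronwall-continuous}) with $\alpha = 0$, $\beta = 1/2$, $\gamma = 0$, exactly as done in the proof of \eqref{eq:bound-linear-derivative-2-derivative-2}, to close the estimate on a short time interval $[0, t_0]$.

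For the second estimate \eqref{eq:LIF-Gaussian-estimate-2}, the same Duhamel identity is used, but this time I would bound the free term $\| \p_v e^{tA} u_0 \|_{L^2(\varphi)}$ by $C t^{-1/2} \| u_0 \|_{L^2(\varphi)}$ via \eqref{eq:FP-delta-Gaussian-estimate-2}, while keeping the same bound on the integrand. This leads to
\begin{equation*}
  f(t) \leq C t^{-1/2} \|u_0\|_{L^2(\varphi)} + C \int_0^t (t-s)^{-1/2} f(s) \d s,
\end{equation*}
and, following the template of the proof of \eqref{eq:bound-linear-complete-derivative-2-2}, I would introduce the substitution $z(t) := t^{1/2} f(t)$ and apply Lemma \ref{lem:gronwall-continuous} with $\alpha = \beta = \gamma = 1/2$ to obtain the desired $t^{-1/2}$ blowup.

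I do not expect any serious obstacle here, since the hard analytic input (the Gaussian-weighted kernel estimates together with the absorption of the boundary $\delta_{V_R}$ term) has already been carried out in Lemma \ref{lem:FP-delta-Gaussian-L2-estimates}; the remaining work is the by-now-standard Duhamel plus singular-Gronwall bootstrap, and the fact that $V_F - bN$ is merely a constant means $L_2$ contributes no $v$-growth that could interact badly with the Gaussian weight $\varphi$. The only mild technical point to watch is that both Gaussian bounds from Lemma \ref{lem:FP-delta-Gaussian-L2-estimates} hold only on a short interval $[0, t_0]$, so $t_0$ in the conclusion must be chosen no larger than the one produced there; this is consistent with the statement.
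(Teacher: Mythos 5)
Your proposal is correct and follows essentially the same route as the paper: the same Duhamel splitting $L = A + L_2$, the same substitution of the Gaussian-weighted bounds from Lemma \ref{lem:FP-delta-Gaussian-L2-estimates} for the free term and the integrand, and the same short-time singular Gronwall argument via Lemma \ref{lem:gronwall-continuous} with parameters $(\alpha,\beta,\gamma)=(0,\tfrac12,0)$ and $(\tfrac12,\tfrac12,\tfrac12)$ respectively, after the change of variable $z(t)=t^{1/2}\|\p_v e^{tL}u_0\|_{L^2(\varphi)}$. The only discrepancy with the printed proof is a harmless typographical slip in the paper's display \eqref{eq:2}, where the first term on the right should read $\|\p_v e^{tA}u_0\|_{L^2(\varphi)}$ as you wrote.
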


\begin{proof}
  As usual, call $u(\cdot, t) = e^{tL} u_0$. We prove the first
  estimate by taking the $L^2(\varphi)$ norm in
  \eqref{eq:linear-complete-derivative}, getting
  \begin{equation}
    \label{eq:2}
    \|\p_v u(\cdot,t) \|_{L^2(\varphi)} \le
    \|\p_v u(\cdot,t) \|_{L^2(\varphi)}
    + C \int_0^t\|\p_v e^{(t-s)A}\left(\p_v u(\cdot,s)\right)\|_{L^2(\varphi)}\d s.
  \end{equation}
  We use now \eqref{eq:FP-delta-Gaussian-estimate-1} from Lemma
  \ref{lem:FP-delta-Gaussian-L2-estimates} on the first term, and
  \eqref{eq:FP-delta-Gaussian-estimate-2}  on the term inside the
  integral:
  \begin{equation*}
    \|\p_v u(\cdot,t)\|_{L^2(\varphi)} \lesssim
    \|\p_v u_0 \|_{L^2(\varphi)}
    + \int_0^t (t-s)^{-\frac12} \| \p_v u(\cdot,s) \|_{L^2(\varphi)} \d s,
  \end{equation*}
  valid for $t$ small enough. Lemma \ref{eq:gronwall-before} applied
  to $z(t) = \| \p_v u(\cdot,t) \|_{L^2(\varphi)}$, with
  $\beta = \frac12$ and $\alpha = \gamma = 0$ immediately gives
  \eqref{eq:LIF-Gaussian-estimate-1}.

  In order to get \eqref{eq:LIF-Gaussian-estimate-2} we continue from
  \eqref{eq:2} and use now \eqref{eq:FP-delta-Gaussian-estimate-2} on
  both terms on the right hand side:
  \begin{equation*}
    \|\p_v u(\cdot,t)\|_{L^2(\varphi)} \lesssim
    t^{-\frac12} \| u_0 \|_{L^2(\varphi)}
    + \int_0^t (t-s)^{-\frac12} \| \p_v u(\cdot,s) \|_{L^2(\varphi)} \d s,
  \end{equation*}
  or equivalently, setting $z(t) := t^{\frac12}\|\p_v u(\cdot,t)\|_{L^2(\varphi)}$,
  \begin{equation*}
     z(t) \lesssim
    \| u_0 \|_{L^2(\varphi)}
    + t^{\frac12} \int_0^t (t-s)^{-\frac12} s^{-\frac12} z(s) \d s,
  \end{equation*}
  again valid for small enough $t$. Lemma \ref{eq:gronwall-before}
  with $\alpha = \beta = \gamma = \frac12$ now proves the result.
\end{proof}


In the following lemma we prove the Gronwall-type inequality used in
the previous results. The case with $\beta=0$ can be deduced from the
standard Gronwall's inequality, and the case $\alpha = \gamma$ can also
be obtained as a consequence of the techniques in Section
\ref{sec:volterra}. However, since the focus of this result is the
short-time behavior and the proof in that case is very
straightforward, we prefer to give an independent proof now:

\begin{lem}[Short-time Gronwall-type inequality]
  \label{lem:gronwall-continuous}
  Take $T > 0$ and $z \in L^\infty[0,T]$ which satisfies
  \begin{equation}
    \label{eq:gronwall-before}
    z(t)\le
    k +
    t^{\alpha}
    \int_{0}^{t}\left(t-s\right)^{-\beta}s^{-\gamma}z(s)\d s,
    \qquad \text{for all $t \in [0,T]$,}
  \end{equation}
  with $k, \alpha,\beta$ and $\gamma$ constants such that $k > 0$;
  $\beta, \gamma \in [0,1)$ and $0< \alpha-\beta-\gamma+1$.  Then
  there exist $t_0 \in (0,T]$ and $C>0$ depending only on $\alpha$,
  $\beta$ and $\gamma$ such that
  \begin{equation*}
    z(t) < Ck \qquad
    \text{for all $t \in [0,t_0]$.}
  \end{equation*}
\end{lem}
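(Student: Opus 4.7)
The plan is a standard bootstrap/absorption argument exploiting the short-time smallness together with an explicit Beta-function computation of the kernel. I would proceed as follows.

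First, for any $t_0 \in (0,T]$ set $M(t_0) := \sup_{t \in [0,t_0]} z(t)$, which is finite since $z \in L^\infty[0,T]$. The key observation is that the inner integral can be computed explicitly: after the change of variables $s = t\tau$ we obtain
\begin{equation*}
  \int_{0}^{t} (t-s)^{-\beta} s^{-\gamma} \, \d s
  = t^{1-\beta-\gamma} \int_0^1 (1-\tau)^{-\beta} \tau^{-\gamma} \, \d\tau
  = t^{1-\beta-\gamma} B(1-\gamma,1-\beta),
\end{equation*}
and the Beta integral $B(1-\gamma,1-\beta) = \Gamma(1-\gamma)\Gamma(1-\beta)/\Gamma(2-\beta-\gamma)$ is finite precisely because $\beta, \gamma \in [0,1)$. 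Let me denote this finite constant by $K = K(\beta,\gamma)$.

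Next, I would bound $z(s)$ inside the integral in \eqref{eq:gronwall-before} by $M(t_0)$ for $s \in [0,t_0]$ and $t \in [0,t_0]$, yielding
\begin{equation*}
  z(t) \le k + K \, M(t_0) \, t^{\alpha + 1 - \beta - \gamma}
  \qquad \text{for all } t \in [0,t_0].
\end{equation*}
Taking the supremum over $t \in [0,t_0]$ on the left-hand side gives
\begin{equation*}
  M(t_0) \le k + K\, M(t_0)\, t_0^{\alpha+1-\beta-\gamma}.
\end{equation*}
Since $\alpha + 1 - \beta - \gamma > 0$ by hypothesis, the exponent on $t_0$ is strictly positive, so we may choose $t_0 \in (0,T]$ small enough (depending only on $\alpha,\beta,\gamma$) so that $K \, t_0^{\alpha+1-\beta-\gamma} \le 1/2$. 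Absorbing the second term on the right-hand side then gives $M(t_0) \le 2k$, i.e.\ $z(t) \le 2k$ for all $t \in [0,t_0]$, which is the claim with $C=2$.

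There is no real obstacle here: the only point requiring a little care is the justification that $M(t_0)$ is finite so that the absorption step is legitimate, which is guaranteed by the hypothesis $z \in L^\infty[0,T]$. The condition $\beta,\gamma < 1$ is exactly what makes the Beta integral converge, and the condition $\alpha+1-\beta-\gamma>0$ is exactly what lets us beat the constant $K$ by choosing $t_0$ small. Neither the constant $C$ nor $t_0$ depends on $k$ or on $z$, only on $\alpha,\beta,\gamma$, as required.
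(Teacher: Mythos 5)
Your proof is correct and follows essentially the same route as the paper: both arguments bound $z(s)$ in the integral by its running supremum, evaluate the kernel integral exactly as a Beta function $B(1-\gamma,1-\beta)$, and absorb the resulting term by choosing $t_0$ small using the hypothesis $\alpha-\beta-\gamma+1>0$. The only cosmetic difference is that the paper works with the running maximum $y(t)=\max_{\tau\in[0,t]}z(\tau)$ and extracts $C=(1-t_0^{\alpha-\beta-\gamma+1}C_{\beta,\gamma})^{-1}$, while you fix $t_0$ so the kernel factor is at most $1/2$ and get the cleaner $C=2$.
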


\begin{proof}
  Let us define the function
  $y(t):=\max_{\tau\in\left[0,t\right]}z(\tau)$, for which equation
  \eqref{eq:gronwall-before} implies
  \begin{equation*}
    y(t)
    \leq
    k+y(t)t^\alpha\int_{0}^t\left(t-s\right)^{-\beta}s^{-\gamma}\d s
    =
    k + y(t) t^{\alpha-\beta-\gamma+1} C_{\beta, \gamma}
  \end{equation*}
  where $C_{\beta,\gamma} := B(1-\beta, 1-\gamma)$, the Beta function
  evaluated at $(1-\beta, 1-\gamma)$.  Now, using that
  $\alpha - \beta - \gamma + 1 > 0$, we may take $t_0 \in (0,T]$ small
  enough so that $t_0^{\alpha-\beta-\gamma+1} C_{\beta, \gamma} <
  1$. Then for any $t \in [0,t_0]$ we obtain $ y(t) \le k C $ with
  $C := (1 - t_0^{\alpha-\beta-\gamma+1} C_{\beta, \gamma})^{-1}$.
\end{proof}

\section{Asymptotics of Volterra-type equations}
\label{sec:volterra}

We devote this section to the following integral equation with unknown
$f$:
\begin{equation}
  \label{eq:renewal}
  f(t) = g(t) + \int_0^t f(s) h(t-s) \d s
  \qquad \text{for $t \geq 0$}.
\end{equation}
Understanding this equation is essential to the proofs of some of the
main results of this paper, especially those in Sections
\ref{sec:nonlinear} and \ref{sec:linearized-sg}. Equation
\eqref{eq:renewal} is a Volterra's convolution integral equation of the
second kind, also known as the \emph{renewal equation} in
the context of delay differential equations \cite[Chapter
I]{Diekmann1995}.

\subsection{Exact asymptotic behavior}

The following result is a version of \cite[Theorem 5.4]{Diekmann1995},
which we need to adapt since the technical assumptions on the
functions $g$ and $h$ do not match our case. The strategy of the proof
is however essentially the same. Our main result is the following:

\begin{thm}
  \label{thm:asymptotic}
  Assume that $g, h \: [0,+\infty) \to \R$ are given functions, with
  $g$ being continuous and of bounded variation on compact subsets of
  $(0,+\infty)$, such that for some $\alpha \in \R$ we have
  \begin{align}
    \label{eq:c1}
    &h \in L^1([0,+\infty); e^{-\alpha t}) \cap L^p([0,+\infty);
      e^{-\alpha t})
      \qquad \text{for some $p > 1$},
    \\
    \label{eq:c2}
    &t \mapsto g(t) e^{-\alpha t}
    \qquad \text{is bounded on $[0,+\infty)$.}
  \end{align}
  Let $f$ be the solution to equation \eqref{eq:renewal}, and consider the
  function
  \begin{equation*}
    F(k) := \frac{\l{g}(k)}{1 - \l{h}(k)}
    \qquad \text{for $\Re(k) \geq \alpha$},
  \end{equation*}
  which is a meromorphic function on the set of $k$ with
  $\Re(k) > \alpha$. Then:
  \begin{enumerate}
  \item If $F$ has no poles on the set
    $\{k \in \C \mid \Re(k) > \alpha\}$, then for every
    $\beta > \alpha$ there is a constant $C_\beta > 0$ (depending on
    $g$, $h$ and $\beta$) such that
    \begin{equation*}
      |f(t)| \leq C_\beta e^{\beta t}
      \qquad \text{for all $t \geq 1$.}
    \end{equation*}

  \item If $F$ has some pole of order $N$ on the set
    $R_\alpha := \{k \in \C \mid \Re(k) > \alpha\}$, then there is a polynomial
    $p = p(t)$ of degree $N-1$ such that
    \begin{equation*}
      f(t) \sim p(t) e^{\beta t}
      \qquad \text{as $t \to +\infty$,}
    \end{equation*}
    where $\beta$ is the largest real part of the poles of $F$ on
    $R_\alpha$.
  \end{enumerate}
  In addition, the constant $C_\beta$ and the coefficients $a_1,
  \dots, a_N$ of $p$ can be chosen so that
  \begin{equation*}
    \max\{|a_1|, \dots, |a_N| \}
    \leq
    C_{h} \sup_{t \geq 0} |g(t)| e^{-\alpha t},
    \qquad
    C_\beta \leq C_{h,\beta} \sup_{t \geq 0} |g(t)| e^{-\alpha t},
  \end{equation*}
  where $C_h$ is a constant which depends only on $h$, and
  $C_{h,\beta}$ depends only on $h$ and $\beta$.
\end{thm}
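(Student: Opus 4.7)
The plan is to convert equation \eqref{eq:renewal} into an identity between Laplace transforms, recover $f$ by Fourier inversion along a vertical line $\Re k = \gamma$ inside the half-plane of convergence, and then shift the contour of integration leftward, picking up any residues of $F$ along the way. The two conclusions correspond exactly to whether residues appear in that shift.

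First I would verify that $f$ itself grows at most exponentially: a Gronwall-type argument applied directly to \eqref{eq:renewal}, combined with \eqref{eq:c1}--\eqref{eq:c2}, shows that for some $\gamma$ large enough $|f(t)| \leq K e^{\gamma t}$ on $[0,\infty)$. Hence the Laplace transform $\hat f(k) = \int_0^\infty f(t)e^{-kt}\d t$ is well defined and analytic on $\{\Re k > \gamma\}$, and applying the convolution theorem to \eqref{eq:renewal} yields $\hat f(k)(1-\hat h(k)) = \hat g(k)$. By analytic continuation the poles of $\hat f$ inside $\{\Re k > \alpha\}$ coincide with those of $F$, which is the reason the statement is phrased in terms of $F$.

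Next I would apply the Bromwich inversion formula, writing
\begin{equation*}
f(t) = \frac{1}{2\pi i}\int_{\gamma - i\infty}^{\gamma + i\infty} F(k)\, e^{kt}\,\d k,
\end{equation*}
in a suitable integrated or principal-value sense. Because $e^{-\alpha t}h(t)\in L^1\cap L^p$ with $p>1$, the Riemann-Lebesgue lemma and Hausdorff-Young give $\hat h(\beta + i\omega)\to 0$ as $|\omega|\to\infty$ for every $\beta\geq\alpha$, so $|1-\hat h|$ is bounded below on every vertical line $\Re k=\beta$ outside a bounded portion near the zeros of $1-\hat h$. The bounded-variation hypothesis on $g$ allows an integration by parts that produces an extra $1/(1+|\omega|)$ factor for $\hat g(\beta+i\omega)$, enough to make the Bromwich integral absolutely convergent on any vertical line free of poles of $F$. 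In case (1), I then shift the line from $\Re k=\gamma$ to $\Re k=\beta$ for any $\beta>\alpha$; since $F$ is analytic in between, no residues appear and
\begin{equation*}
|f(t)| \leq \frac{e^{\beta t}}{2\pi}\int_{\R} |F(\beta + i\omega)|\,\d\omega \leq C_{h,\beta}\, \sup_{s\geq 0}|g(s)|e^{-\alpha s}\, e^{\beta t},
\end{equation*}
the last estimate recording that $\hat g$ depends linearly on $g$ with norm controlled by $\sup_{s\geq 0}|g(s)|e^{-\alpha s}$. In case (2), the same shift past the finitely many poles with largest real part $\beta$ picks up $\sum_j \operatorname{Res}_{k=k_j}\bigl[F(k)e^{kt}\bigr]$; a pole of order $N$ at $k_j$ contributes $e^{k_j t}$ times a polynomial in $t$ of degree $N-1$ whose coefficients are linear in the values of $\hat g$ and its derivatives at $k_j$, hence again controlled by $\sup_{s\geq 0}|g(s)|e^{-\alpha s}$. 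The remaining Bromwich integral on a line just to the left of the poles is $o(e^{\beta t})$ by case (1), yielding the stated asymptotic $f(t)\sim p(t)e^{\beta t}$.

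The main obstacle I foresee is the contour shift itself, since moving the vertical line across the strip $\alpha<\Re k<\gamma$ requires decay of $F(k)e^{kt}$ along the horizontal segments $\Im k =\pm R$ as $R\to\infty$. Getting this cleanly uses both the Riemann-Lebesgue/Hausdorff-Young decay of $\hat h$ and the $1/(1+|\omega|)$ gain for $\hat g$ coming from bounded variation; if direct absolute convergence is insufficient one can work instead with truncated contours $[\gamma - iR, \gamma + iR]$ and pass to the limit by a Fejér-type regularisation in $t$, or alternatively use the formulation of the inversion formula as a limit of averages. Once this technical step is handled, the explicit linear dependence of all constants on $\sup_{s\geq 0}|g(s)|e^{-\alpha s}$ is essentially bookkeeping, because both the Bromwich integral and the residues are linear in $\hat g$.
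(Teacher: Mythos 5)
Your overall strategy — Laplace transform, Bromwich inversion on a line $\Re k = \gamma$ inside the region of convergence, shift the contour leftward past the poles, and read off residues — is exactly the strategy the paper follows (its proof of Theorem \ref{thm:asymptotic} uses the rectangular contour $\Gamma$ with vertical sides at $\Re k = \gamma$ and $\Re k = \alpha$). But the single step you flag as ``the main obstacle'' is where the proposal actually breaks down, and your proposed fix does not close the gap.

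You claim that because $g$ has bounded variation, integration by parts gives $|\hat g(\beta + i\omega)| \lesssim (1+|\omega|)^{-1}$, and that this is ``enough to make the Bromwich integral absolutely convergent on any vertical line free of poles of $F$.'' This is false on two counts. First, $\int_\R (1+|\omega|)^{-1}\,\d\omega$ diverges, so even if the pointwise bound held, $F(\beta + i\cdot)$ would not be in $L^1(\R)$; the displayed estimate $|f(t)| \leq \frac{e^{\beta t}}{2\pi}\int_\R |F(\beta+i\omega)|\,\d\omega$ would then be vacuous. Second, the hypothesis is only bounded variation on \emph{compact} subsets of $(0,\infty)$, with no control on the total variation of $e^{-\alpha t} g(t)$ over $[0,\infty)$, so the uniform $(1+|\omega|)^{-1}$ bound is not available in the first place. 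You gesture at ``Fej\'er-type regularisation'' or ``limits of averages'' as a fallback, but that vague remark is doing all the work: absent a concrete mechanism, the contour-shift argument has no content on the line $\Re k = \alpha$.

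The paper closes this gap with a specific decomposition (its Lemma \ref{lem:F_inverse_bounded}): it writes $F = F_1 + F_2$ with $F_1 = \hat h^N \hat g / (1 - \hat h)$ and $F_2 = \hat g \sum_{n=0}^{N-1} \hat h^{\,n}$. For $N$ large enough, $F_1$ \emph{is} absolutely integrable on the line $\Re k = \alpha$ — this is where the $L^p$ hypothesis $h \in L^p(e^{-\alpha t})$ with $p>1$ and the Hausdorff--Young inequality are used, to control $\|\hat h\|_{2N}^N$. Meanwhile $F_2$ is the Laplace transform of the explicit function $g + g*(\sum_{n=1}^{N-1} h^{*n})$, which is locally of bounded variation, so the Laplace inversion theorem (the paper's Lemma \ref{lem:inverse_Laplace}) applies to it directly and gives boundedness of the principal-value integral without any integrability of $F_2$ itself. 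This is the idea you are missing; without it, or some equivalent substitute, the contour shift and hence both conclusions of the theorem are unproved. I also note that your sketch implicitly assumes that between $\Re k = \beta$ and $\Re k = \gamma$ there are only finitely many poles and that you can tabulate those with maximal real part; this does hold (zeros of $1-\hat h$ are isolated and confined to a bounded region by Riemann--Lebesgue), but it deserves a sentence.
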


\begin{proof}
  For $\Re(k)$ large enough (let us say for
  $\Re(k) \geq \gamma > \alpha$) we may take the Laplace transform of
  \eqref{eq:renewal} and obtain
  \begin{equation*}
    \l{f}(k) = \frac{\l{g}(k)}{1 - \l{h}(k)}
    \qquad \text{for $\Re(k) \geq \gamma$}.
  \end{equation*}
  Notice that we know $\l{f}(k) = F(k)$ for $\Re(k) \geq \gamma$, but
  we cannot say that $\l{f}(k)$ even makes sense for a general $k$
  with $\Re(k) \geq \alpha$.

  Take $R > 0$ large enough and let us define the positively oriented
  curve
  \begin{equation*}
    \Gamma := [\gamma -iR, \gamma + iR] \cup
    [\gamma + iR, \alpha + iR] \cup
    [\alpha + iR, \alpha - iR] \cup
    [\alpha - iR, \gamma - iR].
  \end{equation*}
  The function $F$ is analytic on its domain, so we may use Cauchy's
  integral theorem on the curve $\Gamma$ to obtain that, for any $t > 0$,
  \begin{multline}
    \label{eq:5}
    2 \pi i \sum{\Res(F(k) e^{kt}, z_n)} = \int_\Gamma e^{k t} F(k) \d k
    =
    \int_{\gamma -iR}^{\gamma + iR} e^{kt} F(k) \d k
    -   e^{i R t} \int_{\alpha}^{\gamma} e^{s t} F(s + i R) \d s
    \\
    - \int_{\alpha -iR}^{\alpha + iR} e^{kt} F(k) \d k
    +   e^{- i R t} \int_{\alpha}^{\gamma} e^{s t} F(s - i R) \d s
    =: I_1 + I_2 + I_3 + I_4,
  \end{multline}
  where the residue sum is over all zeros of $F$ inside the curve
  $\Gamma$. From equation \eqref{eq:renewal} one can see that $f$ must
  be continuous and of bounded variation on compact subsets of
  $[0,+\infty)$ (since $h$ is locally integrable and $g$ is continuous
  and locally of bounded variation). Hence we can apply the inversion
  theorem for the Laplace transform (Lemma \ref{lem:inverse_Laplace})
  to get
  \begin{equation}
    \label{eq:I1}
    \lim_{R \to +\infty} I_1
    =
    2 \pi i f(t).
  \end{equation}
  On the other hand, Riemann-Lebesgue's lemma shows that
  \begin{equation*}
    \lim_{R \to +\infty} \l{g}(s+iR) = \sqrt{2 \pi} \lim_{R \to
      +\infty} \F_t( e^{-st} g(t)) (R) = 0,
  \end{equation*}
  uniformly for $s \in [\alpha, \gamma]$ (one can see that the
  arguments in the standard proof of the Riemann-Lebesgue lemma can be
  made uniformly for $s \in [\alpha, \gamma]$). For a similar reason
  we have
  \begin{equation*}
    \lim_{R \to +\infty} \l{h}(s+iR) = 0
  \end{equation*}
  uniformly for $s \in [\alpha, \gamma]$. This shows that
  \begin{equation*}
    \lim_{R \to +\infty} F(s + iR) =
    \lim_{R \to +\infty} \frac{\l{g}(s+iR)}{1 - \l{h}(s+iR)}
    = 0
  \end{equation*}
  uniformly for $s \in [\alpha, \gamma]$, and hence that
  \begin{equation}
    \label{eq:I2}
    \lim_{R \to +\infty} I_2
    = 0.
  \end{equation}
  By symmetric arguments applied to $s-iR$,
  \begin{equation}
    \label{eq:I4}
    \lim_{R \to +\infty} I_4
    = 0.
  \end{equation}
  Finally, for the third integral we have (see Lemma
  \ref{lem:F_inverse_bounded})
  \begin{equation}
    \label{eq:I3}
    \left| \lim_{R \to +\infty} I_3 \right| =
    e^{\alpha t} \left|
      \lim_{R \to +\infty} \int_{-R}^R e^{i \xi t} F(\alpha + i \xi) \d \xi
    \right|
    \leq C e^{\alpha t}.
  \end{equation}
  Regarding the residue of $F$ around $z_n$, Lemma \ref{lem:residue}
  shows that for some nontrivial polynomial $p_n$,
  \begin{equation}
    \label{eq:Ires}
    \Res_{k=z_n} (F(k) e^{kt}) = p_n(t) e^{z_n t}.
  \end{equation}
  Using \eqref{eq:I1}, \eqref{eq:I2}, \eqref{eq:I4}, \eqref{eq:I3} and
  \eqref{eq:Ires} in \eqref{eq:5} and passing to the limit as
  $R \to +\infty$ shows the result.
\end{proof}

The following result is a version of the Fourier inversion theorem
taken from \cite[Theorem 24.4]{Doetsch1974}:
\begin{lem}[Inverse Laplace transform]
  \label{lem:inverse_Laplace}
  Let $f \: [0,+\infty) \to \R$ and $\alpha \in \R$ be such that
  $t \mapsto f(t) e^{-\alpha t}$ is absolutely integrable. Let $t > 0$
  be a point where $f$ has bounded variation in some neighbourhood of
  $t$. Then for every $\beta \geq \alpha$ we have
  \begin{equation*}
    \lim_{R \to +\infty} \frac{1}{2 \pi i}
    \int_{\beta-i R}^{\beta + i R} e^{kt} \l{f}(k) \d k
    = \frac12 (f(t_+) + f(t-)),
  \end{equation*}
  where the integral is understood as a complex integral along the
  straight line with real part $R$.
\end{lem}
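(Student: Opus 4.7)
The plan is to reduce the statement to the classical Dirichlet–Jordan pointwise form of the Fourier inversion theorem. Set $g \: \R \to \R$ by $g(s) := e^{-\beta s} f(s)$ for $s \geq 0$ and $g(s) := 0$ for $s < 0$. Because $s \mapsto e^{-\alpha s} f(s)$ is absolutely integrable and $\beta \geq \alpha$, the factor $e^{-(\beta - \alpha) s}$ is bounded on $[0,+\infty)$, so $g \in L^1(\R)$. Writing the contour integral with the parametrization $k = \beta + i\xi$ gives
\begin{equation*}
  \frac{1}{2\pi i}\int_{\beta - iR}^{\beta + iR} e^{kt}\,\l{f}(k)\,\d k
  \;=\;
  \frac{e^{\beta t}}{2\pi} \int_{-R}^{R} e^{i\xi t}\,\l{f}(\beta + i\xi)\,\d\xi,
\end{equation*}
and $\l{f}(\beta + i\xi) = \int_0^\infty e^{-(\beta + i\xi)s} f(s)\,\d s = \widehat{g}(\xi)$ with the Fourier convention $\widehat g(\xi) = \int_\R g(s) e^{-i\xi s}\,\d s$.

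Next, I would invoke the Dirichlet–Jordan inversion theorem: if $g \in L^1(\R)$ is of bounded variation on some neighborhood of a point $t$, then
\begin{equation*}
  \lim_{R\to +\infty} \frac{1}{2\pi} \int_{-R}^{R} e^{i\xi t}\,\widehat g(\xi)\,\d\xi
  \;=\;
  \tfrac{1}{2}\bigl(g(t_+) + g(t_-)\bigr).
\end{equation*}
The hypothesis that $f$ has bounded variation in a neighborhood of $t > 0$, combined with the smoothness of $s \mapsto e^{-\beta s}$, transfers bounded variation from $f$ to $g$ on a neighborhood of $t$ that stays inside $(0,+\infty)$. Multiplying by $e^{\beta t}$ and using $g(t_\pm) = e^{-\beta t} f(t_\pm)$ then yields the claim.

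The main obstacle is the Dirichlet–Jordan step itself, which is not trivial at a general point of bounded variation. The standard argument goes by writing
\begin{equation*}
  \frac{1}{2\pi} \int_{-R}^{R} e^{i\xi t}\,\widehat g(\xi)\,\d\xi
  \;=\;
  \frac{1}{\pi} \int_{0}^{\infty} \bigl(g(t+u) + g(t-u)\bigr)\frac{\sin(Ru)}{u}\,\d u,
\end{equation*}
splitting the $u$-integral into a small window $[0,\delta]$ (handled via the Jordan criterion using the monotone pieces of the bounded-variation decomposition and the Dirichlet kernel identity $\tfrac{1}{\pi}\int_0^\infty \sin(Ru)/u\,\d u = \tfrac{1}{2}$) and a tail $[\delta,\infty)$ (handled by the Riemann–Lebesgue lemma, since $u^{-1}(g(t+u) + g(t-u))\in L^1(\delta,\infty)$). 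Since we only cite the result from \cite{Doetsch1974}, the detailed verification can be deferred; for our downstream use the inversion is only required at points where the integrand $f$ is actually continuous, where the simpler version of the argument applies and the limit equals $f(t)$ directly.
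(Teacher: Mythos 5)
The paper does not supply its own proof of this lemma; it simply cites \cite[Theorem~24.4]{Doetsch1974}. Your reduction is the standard one and is correct: the substitution $k = \beta + i\xi$ turns the Bromwich integral into $e^{\beta t}$ times the symmetric Fourier inversion integral of $g(s) = e^{-\beta s} f(s)\,\mathbbm{1}_{s\ge 0}$, the $L^1$ hypothesis transfers from $e^{-\alpha s}f(s)$ to $g$ because $\beta\ge\alpha$, and bounded variation of $f$ near $t>0$ passes to $g$ (product with a smooth function on a neighbourhood inside $(0,\infty)$, so the jump of $g$ at the origin plays no role). The Dirichlet--Jordan pointwise inversion then gives exactly $\tfrac12\bigl(g(t_+)+g(t_-)\bigr)$, and multiplying by $e^{\beta t}$ recovers the stated formula. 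Your sketch of the Dirichlet--Jordan step (rewriting as a $\sin(Ru)/u$ kernel against $g(t+u)+g(t-u)$, splitting into a small window controlled by the Jordan test on the monotone pieces and a tail controlled by Riemann--Lebesgue) is accurate; this is essentially the content of Doetsch's Theorem 24.4, so your argument is consistent with what the paper cites. No gap.
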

Of course, if additionally $f$ is continuous at $t$, then
\begin{equation*}
  \lim_{R \to +\infty} \frac{1}{2 \pi i}
  \int_{\beta-i R}^{\beta + i R} e^{kt} \l{f}(k) \d k
  = f(t).
\end{equation*}

The following lemma gives the result of calculating the residue of the
complex function $F$ from the proof of Theorem \ref{thm:asymptotic} at
a given pole. It is essentially the same as Lemma 5.1 in \cite{Diekmann1995}:

\begin{lem}[Residue close to a pole]
  \label{lem:residue}
  Under the assumptions of Theorem \ref{thm:asymptotic}, define
  \begin{equation*}
    F(k) := \frac{\l{g}(k)}{1 - \l{h}(k)}
    \qquad 
    \qquad \text{for $\Re(k) \geq \alpha$}.
  \end{equation*}
  We notice $F$ is analytic on
  $R_\alpha := \{w \in \C \mid \Re(w) > \alpha\}$, except for possibly
  a set of isolated poles. Let $z \in \C$ be a pole of $F$ with
  $\Re(z) > \alpha$. There exists a nontrivial polynomial $p = p(t)$
  of degree $N-1$ such that
  \begin{equation*}
    \Res_{k = z} ( e^{k t} F(k) ) = p(t) e^{z t}
  \end{equation*}
  for any $t \in \R$. All coefficients $a_1, \dots, a_{N}$ of $p$ can
  be bounded by
  \begin{equation*}
    |a_n| \leq C \sup_{t \geq 0} |g(t)| e^{-\alpha t}
    \qquad \text{for $n = 1, \dots, N$,}
  \end{equation*}
  for some constant $C$ which depends only on $h$.
\end{lem}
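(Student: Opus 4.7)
The plan is to compute the residue directly from the Laurent expansion of $F$ around $z$, and then to bound the Laurent coefficients using standard estimates on the Laplace transform of $g$.

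First, I would write the local structure near $z$. Since $1 - \hat{h}$ is analytic on $R_\alpha$ and has an isolated zero of order $N$ at $z$, we can factor $1 - \hat{h}(k) = (k-z)^N \phi(k)$ with $\phi$ analytic and nonzero in a neighbourhood of $z$. The numerator $\hat{g}$ is analytic on $R_\alpha$ as well, so $F$ has a pole of order exactly $N$ at $z$ (order $N$ because $\hat{g}(z)$ might vanish, but we call $N$ the actual order of the pole, so by definition $(k-z)^N F(k)$ is analytic and nonzero at $k = z$). Writing the Laurent expansion
\begin{equation*}
    F(k) = \sum_{n=1}^{N} \frac{b_n}{(k-z)^n} + H(k),
\end{equation*}
with $H$ analytic at $z$ and $b_N \neq 0$, multiplication with the Taylor expansion $e^{kt} = e^{zt} \sum_{m \geq 0} (k-z)^m t^m / m!$ shows that the coefficient of $(k-z)^{-1}$ in $e^{kt} F(k)$ is exactly $e^{zt} \sum_{n=1}^{N} b_n t^{n-1}/(n-1)!$. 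Hence
\begin{equation*}
    \Res_{k=z}(e^{kt} F(k)) = p(t) e^{zt},
    \qquad
    p(t) := \sum_{n=1}^{N} \frac{b_n}{(n-1)!} t^{n-1},
\end{equation*}
which is a polynomial of degree $N-1$ whose leading coefficient $b_N/(N-1)!$ is nonzero; so $p$ is nontrivial.

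It remains to bound the coefficients $a_n := b_n/(n-1)!$. I would express $b_n$ via Cauchy's integral formula on a small circle $\gamma_r$ of radius $r$ around $z$ which is contained in $R_\alpha$ and avoids all other poles of $F$:
\begin{equation*}
    b_n = \frac{1}{2\pi i} \oint_{\gamma_r} (k-z)^{n-1} F(k) \d k
        = \frac{1}{2\pi i} \oint_{\gamma_r}
           \frac{(k-z)^{n-1} \hat{g}(k)}{(k-z)^N \phi(k)} \d k.
\end{equation*}
Since $\phi$ is analytic and nonvanishing on a neighbourhood of $\overline{D(z,r)}$, the factor $1/\phi$ is bounded there by a constant depending only on $h$ (through its zero set). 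For $\hat{g}$, the hypothesis \eqref{eq:c2} gives $|g(t)| \leq M e^{\alpha t}$ with $M := \sup_{t \geq 0} |g(t)| e^{-\alpha t}$, and hence for $k$ on $\gamma_r$ (with $\Re(k) \geq \Re(z) - r > \alpha$ once $r$ is small enough)
\begin{equation*}
    |\hat{g}(k)| \leq \int_0^\infty e^{-\Re(k)t} |g(t)| \d t
        \leq \frac{M}{\Re(k) - \alpha}
        \leq \frac{M}{\Re(z) - \alpha - r}.
\end{equation*}
Combining these bounds with the standard ML-estimate for the contour integral yields $|b_n| \leq C_h M$ for $n = 1, \ldots, N$, where $C_h$ depends only on $h$ (through $z$, $N$, $\phi$ and $r$, all of which are determined by $h$). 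Dividing by $(n-1)!$ gives the claimed bound on the coefficients of $p$.

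The only mild subtlety will be packaging the estimates so that $C_h$ genuinely depends only on $h$: the pole $z$, its order $N$, the factorization $1-\hat{h}(k) = (k-z)^N \phi(k)$, and a safe radius $r > 0$ are all functions of $h$ alone, so this is a matter of bookkeeping rather than a genuine obstacle. Once that is done the whole residue calculation is formal, and no further analytic input is needed beyond what is already stated in Theorem \ref{thm:asymptotic}.
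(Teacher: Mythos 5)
Your proposal is correct and follows essentially the same route as the paper: both multiply the Laurent expansion of $F$ at $z$ with the Taylor series of $e^{kt}$ to identify the residue as $p(t)e^{zt}$, and both bound the Laurent coefficients by Cauchy's formula on a small circle around $z$ contained in $\{\Re(w)\geq\beta\}$ for some $\beta>\alpha$, using that $|1-\hat{h}|$ is bounded away from zero on the circle and that $|\hat{g}(k)|$ is controlled by $\sup_{t\geq 0}|g(t)|e^{-\alpha t}$. The only cosmetic difference is that you make the factorization $1-\hat{h}(k)=(k-z)^{N}\phi(k)$ explicit and bound $1/\phi$, whereas the paper bounds $|F(w)|$ directly via $\inf_\gamma|1-\hat{h}|$; these are the same estimate in different packaging.
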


\begin{proof}
  The residue at $k=z$ is equal to $2 \pi i$ times the coefficient of
  the power $-1$ in the Laurent expansion of $e^{k t} F(k)$ around
  $k = z$. Expanding both $F(k)$ and $e^{kt}$ close to $k = z$ we have
  \begin{equation*}
    F(k) = \sum_{n = -N}^\infty a_n (k - z)^n,
    \qquad
    e^{k t} = e^{zt} e^{(k-z)t}
    = e^{zt} \sum_{n=0}^\infty  \frac{t^n}{n!} (k-z)^n,
  \end{equation*}
  with $a_{-N} \neq 0$. Multiplying out these series we obtain that
  the coefficient of the power $-1$ in the Laurent expansion of
  $e^{kt} F(k)$ is
  \begin{equation*}
    \frac{1}{2 \pi i} p(t) := \sum_{j=0}^{N-1} a_{-j-1} \frac{t^{j}}{j!}.
  \end{equation*}
  We see that $p$ is of degree $N-1$, since the coefficient $a_{-N}$
  is nonzero. Its coefficients can easily be bounded in the following
  way: choose a closed circle $\gamma$ of radius $r > 0$, centered on
  $z$, and which does not enclose any other zero of $1-\l{h}$. We may
  also choose $\gamma$ such that it is contained in
  $\{ w \in \C \mid \Re(w) \geq \beta\}$ for some $\beta>
  \alpha$. Then for $n = 1, \dots, N$
  \begin{multline*}
    |a_n| = \frac{1}{2\pi} \left| \int_\gamma \frac{F(w)}{(w-z)^{n+1}}
      \d w \right| \leq \frac{1}{2\pi} r^{n+1} \int_\gamma |F(w)| \d w
    \\
    \leq \frac{1}{2\pi} r^{n+1} C_h \int_\gamma |\l{g}(w)| \d w \leq
    r^{n+2} C_h \int_0^\infty g(t) e^{-\beta t} \d t.
  \end{multline*}
  We notice that $C_h := \inf_\gamma |1 - \l{h}|$ only depends on $h$,
  and the choice of $r$ and $\beta$ also depends on $h$ only. This
  easily gives the bound in the statement.
\end{proof}

\begin{lem}
  \label{lem:F_inverse_bounded}
  Assume that $g, h \: [0,+\infty) \to \R$ are given functions, with
  $g$ being of bounded variation on compact subsets of $(0,+\infty)$,
  such that for some $\alpha_0 \in \R$ we have
  \begin{align}
    \label{eq:7}
    &h \in L^1([0,+\infty); e^{-\alpha_0 t}) \cap L^p([0,+\infty);
      e^{-\alpha_0 t})
      \qquad \text{for some $p > 1$},
    \\
    \label{eq:8}
    &t \mapsto |g(t)| e^{-\alpha_0 t}
    \qquad \text{is bounded on $[0,+\infty)$.}
  \end{align}
  Take any $\alpha > \alpha_0$, and assume there are no zeros of the
  function $1 - \l{h}(k)$ on the line with $\Re(k)=\alpha$. Then the
  function
  \begin{equation*}
    F(k) :=  \frac{\l{g}(k)}{1 - \l{h}(k)}
  \end{equation*}
  satisfies that there exists $C > 0$ such that
  \begin{equation}
    \label{eq:6}
    \left |
      \lim_{R \to +\infty}
      \int_{-R}^R e^{i \xi t} F(\alpha + i \xi) \d \xi
    \right|
    \leq C
    \qquad \text{for all $t > 0$.}
  \end{equation}
  In addition, the constant $C$ can be chosen as $C = C_h \sup_{t \geq
  0} |g(t)| e^{-\alpha_0 t}$, where $C_h$ depends only on $h$.
\end{lem}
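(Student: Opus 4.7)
The plan is to reduce the estimate to a boundedness statement for the solution of a \emph{tilted} renewal equation, and then recover the integral via pointwise Fourier inversion. First I introduce the tilted data $\tilde{g}(t) := e^{-\alpha t}g(t)$ and $\tilde{h}(t) := e^{-\alpha t}h(t)$, extended by zero on $(-\infty,0)$. Because $\alpha > \alpha_0$, hypothesis \eqref{eq:8} gives $|\tilde{g}(t)| \leq M e^{-(\alpha-\alpha_0)t}$ with $M := \sup_{s\geq0}|g(s)|e^{-\alpha_0 s}$, so $\tilde{g} \in L^1(\R_+)\cap L^\infty(\R_+)$ with $\|\tilde{g}\|_\infty \leq M$; similarly \eqref{eq:7} gives $\tilde{h}\in L^1(\R_+)$. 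A change of variable in the Laplace transform shows $\l{\tilde{g}}(\xi) = \l{g}(\alpha+i\xi)$ and $\l{\tilde{h}}(\xi) = \l{h}(\alpha+i\xi)$, so the non-vanishing hypothesis on $1 - \l{h}$ on the line $\Re(k) = \alpha$ becomes $1 - \l{\tilde{h}}(\xi) \neq 0$ for all $\xi \in \R$, and
\[
F(\alpha + i\xi) = \frac{\l{\tilde{g}}(\xi)}{1 - \l{\tilde{h}}(\xi)}.
\]

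The heart of the argument is the Paley-Wiener theorem on the half-line: since $\tilde{h} \in L^1(\R_+)$ and $1-\l{\tilde{h}}$ has no zeros on $\R$, there exists a resolvent kernel $\rho \in L^1(\R_+)$ satisfying $\rho = \tilde{h} + \tilde{h}*\rho$, equivalently $1/(1-\l{\tilde{h}}(\xi)) = 1 + \l{\rho}(\xi)$. Setting $\tilde{f} := \tilde{g} + \tilde{g}*\rho$, the convolution theorem gives $\l{\tilde{f}}(\xi) = F(\alpha+i\xi)$ together with
\[
\|\tilde{f}\|_\infty \leq (1+\|\rho\|_1)\|\tilde{g}\|_\infty \leq (1+\|\rho\|_1)\, M,
\]
where $\|\rho\|_1$ depends only on $h$ (through $\tilde{h}$ and the fixed $\alpha$).

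The final step is Dirichlet's pointwise Fourier inversion theorem: if $\phi\in L^1(\R)$ has bounded variation in a neighbourhood of some $t$, then $\lim_{R\to\infty}\frac{1}{2\pi}\int_{-R}^R e^{i\xi t}\l{\phi}(\xi)\,\d \xi = \tfrac12(\phi(t^+)+\phi(t^-))$. Since $g$ is locally of bounded variation on $(0,+\infty)$, so is $\tilde{g}$, and $\tilde{g}*\rho$ is continuous on $\R$ (convolution of a bounded function with an $L^1$ function); hence $\tilde{f}$ is locally of bounded variation in a neighbourhood of every $t > 0$. Therefore the integral in \eqref{eq:6} converges to $2\pi\,\tilde{f}(t)$ (or to its symmetric average at a discontinuity), whose modulus is bounded by $2\pi(1+\|\rho\|_1)M$; one can take $C_h := 2\pi(1+\|\rho\|_1)$. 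The main obstacle in this scheme is the Paley-Wiener step producing the $L^1$ resolvent $\rho$ from the non-vanishing condition, a classical consequence of Wiener's Tauberian theorem; the rest is routine manipulation of Fourier transforms and convolutions.
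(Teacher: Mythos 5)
Your route is genuinely different from the paper's. The paper splits $F = F_1 + F_2$ where $F_1 := \l{h}^N\l{g}/(1-\l{h})$ is made absolutely integrable on the line $\Re(k)=\alpha$ by taking $N$ large and using the Hausdorff--Young inequality (this is where the hypothesis $h\in L^p$ with $p>1$ is actually used), while $F_2 := \l{g}\sum_{n<N}\l{h}^n$ is recognized as the Laplace transform of the concrete function $g + g*\sum_{n=1}^{N-1}h^{*n}$ and handled by Lemma~\ref{lem:inverse_Laplace}. You instead tilt by $e^{-\alpha t}$, invert $1-\widehat{\tilde h}$ \emph{globally} via a Wiener-type lemma to produce an $L^1$ resolvent $\rho$, and then apply the Jordan inversion test directly to $\tilde f := \tilde g + \tilde g * \rho$. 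Your scheme is conceptually simpler, avoids the choice of $N$, and notably does not use the hypothesis \eqref{eq:7} that $h\in L^p$ for some $p>1$ at all; the cost is reliance on Wiener's $1/f$ theorem, a heavier classical tool than Hausdorff--Young.

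Two issues, one cosmetic and one substantive. First, you invoke the \emph{half-line} Paley--Wiener theorem to conclude $\rho\in L^1(\R_+)$, but that theorem requires $1-\l{\tilde h}(z)\neq 0$ for all $z$ in the closed right half-plane, whereas the hypothesis of the lemma only gives non-vanishing on the single line $\Re(k)=\alpha$; there is no control over zeros with $\Re(k)>\alpha$, and in the paper's actual application such zeros may well exist. What you really should use is Wiener's $1/f$ lemma on the full line: $1-\widehat{\tilde h}$ is non-vanishing on $\R$ and tends to $1$ at $\infty$ (Riemann--Lebesgue), so $1/(1-\widehat{\tilde h}) = 1 + \widehat{\rho}$ for some $\rho\in L^1(\R)$ \emph{not necessarily supported on $\R_+$}. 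Your subsequent manipulations (boundedness and $L^1$ membership of $\tilde f$, identification of $\widehat{\tilde f}$) go through unchanged with $\rho\in L^1(\R)$, so this is an easy repair.

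Second, and substantively: you justify applying the Jordan pointwise inversion test to $\tilde f = \tilde g + \tilde g*\rho$ by saying $\tilde g$ is locally BV and $\tilde g*\rho$ is \emph{continuous}, and then conclude that $\tilde f$ is locally BV. That is a non sequitur: continuity does not imply bounded variation, and a sum of a BV function with a merely continuous one need not be BV, so the Jordan test does not apply as argued. To close this you would need an actual bound on the variation of $\tilde g*\rho$ near a given $t>0$. One natural attempt---differentiate $\tilde g*\rho$ distributionally to get $(\mathrm{d}\tilde g)*\rho$ and bound its total variation by $\|\rho\|_1$ times the variation of $\tilde g$---runs into the problem that the lemma only assumes $g$ is of bounded variation on compact subsets of $(0,+\infty)$, so the variation of $\tilde g$ near $0$ may be infinite, and the convolution sees that region. (To be fair, the paper's own proof asserts the bounded variation of $f_2$ with equally little justification, and in the actual application to Theorem~\ref{thm:N-asymptotic-behavior-main} the relevant $g$ is $\mathcal{C}^1$ on $[0,+\infty)$, so neither proof's gap matters there. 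But as written your step from continuity to BV is incorrect and must be replaced by a genuine variation estimate or by strengthening the standing hypothesis on $g$ to bounded variation on $[0,T]$ for each $T$.)
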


The difficulty in proving the above statement is that in general we do
not know that $F(k)$ is absolutely integrable on the line with
$\Re(k) = \alpha$; otherwise, we would bound the above integral by the
integral of $|F(k)|$ over that line, ignoring $e^{i \xi
  t}$. Similarly, we do not know whether $F$ on the line with
$\Re(k) = \alpha$ is the Laplace transform of any function $f$;
otherwise, we would use the inversion lemma \ref{lem:inverse_Laplace}
to say that the limit must be $e^{-\alpha t} f(t)$, and we would just
need to check whether the latter is a bounded function. Instead, what
we can do is write $F$ as a sum of $F_1$ and $F_2$, where $F_1$ is
absolutely integrable on the line and $F_2$ is the Laplace transform
of a known function. That is the main idea of the following proof:

\begin{proof}[Proof of Lemma \ref{lem:F_inverse_bounded}]
  We first observe that for any $\xi \in \R$,
  \begin{equation*}
    \l{h}(\alpha + i \xi) = \sqrt{2 \pi} \F_t(e^{-\alpha t} h(t)) (\xi),
  \end{equation*}
  so by the Riemann-Lebesgue lemma we have $\l{h}(\alpha + i \xi) \to
  0$ as $|\xi| \to +\infty$. Since $\xi \mapsto \l{h}(\alpha + i \xi)$
  is continuous and $1 - \l{h}(\alpha + i \xi)$ has no zeros for $\xi
  \in \R$, it is clear that for some $C_2 > 0$,
  \begin{equation}
    \label{eq:denom-bound}
    \frac{1}{|1 - \l{h}(\alpha + i \xi)|} \leq C_2
    \qquad \text{for all $\xi \in \R$,}
  \end{equation}
  Choosing an integer $N \geq 1$ we can rewrite
  \begin{equation*}
    F = \frac{\l{h}^N \l{g}}{1 - \l{h}}
    + \l{g} \sum_{n=0}^{N-1} \l{h}^n = F_1 + F_2.
  \end{equation*}
  The second term $F_2$ is the Laplace transform of
  $f_2 := g + g * (\sum_{n=1}^{N-1} h^{*n})$. Lemma
  \ref{lem:inverse_Laplace} then shows that
  \begin{equation*}
    \lim_{R \to +\infty} \frac{1}{2 \pi i}
    \int_{\beta-i R}^{\beta + i R} e^{kt} F_2(k) \d k
    = \frac12 (f_2(t_+) + f_2(t_-))
    \qquad \text{for all $t \geq 0$.}
  \end{equation*}
  Notice that Lemma \ref{lem:inverse_Laplace} is applicable because
  $f_2$ is of bounded variation on compact sets (which can be seen
  from \eqref{eq:7} and \eqref{eq:8}). Hence for all $t > 0$ we have
  \begin{equation}
    \label{eq:9}
    \left|
      \lim_{R \to +\infty}
      \int_{-R}^R e^{i \xi t} F_2(\alpha + i \xi) \d \xi
    \right|
    = \pi e^{-\alpha t} (|f_2(t_+)| + |f_2(t_+)|) \leq C_3,
  \end{equation}
  for some $C > 0$. (We notice that $t \mapsto |f_2(t)| e^{-\alpha t}$
  is bounded for all $t > 0$ due to \eqref{eq:7} and \eqref{eq:8},
  so the same is true if we write $t_+$ or $t_-$ instead of $t$.) Now
  for the term $F_1$, we use \eqref{eq:denom-bound} to write
  \begin{multline*}
    \int_{-\infty}^{+\infty} |F_1(\alpha + i\xi)| \d \xi
    \leq
    C_2 \int_{-\infty}^{+\infty} |\l{h}(\alpha + i\xi)|^N
    |\l{g}(\alpha + i\xi)| \d \xi
    \\
    =
    2 \pi C_2 \int_{-\infty}^{+\infty} \F_t(e^{-\alpha t} h)(\xi)^N
    \F_t(e^{-\alpha t} g)(\xi) \d \xi
    \\
    \leq
    C_2 \|\F_t(e^{-\alpha t} h)\|_{2 N}^N \|
    \F_t(e^{-\alpha t} g) \|_2
    \leq C_2 \|e^{-\alpha t} h\|_{p}^N \|e^{-\alpha t} g\|_2,
  \end{multline*}
  where we have used the Hausdorff-Young inequality and $p$ is such
  that $\frac{1}{p} + \frac{1}{2N} = 1$. Now, $\|g\|_2$ is finite, and
  we may choose $N$ large enough to make $p$ small enough so that
  $\|e^{-\alpha t} h\|_{p}$ is finite (by hypothesis). So we see that
  \begin{equation*}
    \left|
      \int_{-R}^R e^{i \xi t} F_1(\alpha + i \xi) \d \xi
    \right|
    \leq C_4
  \end{equation*}
  for all $t \geq 0$, and also that this integral has a limit as
  $R \to +\infty$, for all $t \geq 0$. From this and \eqref{eq:9} we
  see that
  \begin{equation*}
    \lim_{R \to +\infty} \left|
      \int_{-R}^R e^{i \xi t} F(\alpha + i \xi) \d \xi
      \right|
    \leq C := \max\{C_3, C_4\},
  \end{equation*}
  for all $t > 0$. This shows the result. One may also check that all
  constants which appear above are consistent with the form
  $C = C_h \sup_{t \geq 0} |g(t)| e^{-\alpha_0 t}$ given in the
  statement.
\end{proof}

\subsection{Comparison results and special cases}

We also have a comparison result for solutions to inequalities of this
type. 
\begin{lem}[Comparison result]
  \label{lem:volterra-comparison}
  Take $T \in (0,+\infty]$ and let $g \: [0,T) \to \R$ and
  $h \: [0,T) \to [0,+\infty)$ be locally integrable functions. If
  $f_1, f_2 \: [0,T) \to \R$ are locally bounded functions (i.e.,
  $L^\infty$ on compact intervals) such that
  \begin{gather*}
    f_2(t) \geq g(t) + \int_0^t f_2(s) h(t-s) \d s
    \qquad \text{for $t \in [0,T)$},
    \\
    f_1(t) \leq g(t) + \int_0^t f_1(s) h(t-s) \d s
    \qquad \text{for $t \in [0,T)$},
  \end{gather*}
  then
  \begin{equation*}
    f_1(t) \leq f_2(t)
    \qquad \text{for $t \in [0,T)$}.
  \end{equation*}
\end{lem}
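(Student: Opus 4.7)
\medskip
\noindent\textbf{Proof plan.} The plan is to set $w := f_2 - f_1$ and subtract the two integral inequalities to obtain a one-sided convolution inequality for $w$, then use a weighted sup-norm trick to force the negative part of $w$ to vanish.

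First I would observe that since both inequalities involve the same kernel $h$ and the same forcing $g$, subtraction yields
\begin{equation*}
  w(t) \;\geq\; \int_0^t w(s)\, h(t-s)\,\d s
  \qquad \text{for all $t \in [0,T)$.}
\end{equation*}
Writing $w_-(t) := \max(-w(t),0)$, which is locally bounded because $f_1,f_2$ are, and using $h \geq 0$, I get
\begin{equation*}
  -w(t) \;\leq\; \int_0^t (-w(s))\, h(t-s)\,\d s
  \;\leq\; \int_0^t w_-(s)\, h(t-s)\,\d s.
\end{equation*}
Since the right-hand side is nonnegative, taking the positive part of the left-hand side preserves the inequality, so
\begin{equation*}
  w_-(t) \;\leq\; \int_0^t w_-(s)\, h(t-s)\,\d s
  \qquad \text{for all $t \in [0,T)$.}
\end{equation*}

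Next, I would fix an arbitrary $T' \in (0,T)$ and introduce the weighted norm
$\|f\|_\lambda := \sup_{t\in[0,T']} e^{-\lambda t}|f(t)|$ for $\lambda > 0$.
Multiplying the previous inequality by $e^{-\lambda t}$ and changing variables in the convolution gives
\begin{equation*}
  e^{-\lambda t} w_-(t)
  \;\leq\; \|w_-\|_\lambda \int_0^{T'} e^{-\lambda s}\, h(s)\,\d s
  \qquad \text{for all $t \in [0,T']$,}
\end{equation*}
so that
\begin{equation*}
  \|w_-\|_\lambda \;\leq\; \|w_-\|_\lambda \int_0^{T'} e^{-\lambda s} h(s)\,\d s.
\end{equation*}
Since $h \in L^1(0,T')$ by local integrability, dominated convergence implies $\int_0^{T'} e^{-\lambda s} h(s)\,\d s \to 0$ as $\lambda \to +\infty$. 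Choosing $\lambda$ large enough to make this integral strictly less than $1$ forces $\|w_-\|_\lambda = 0$, hence $w \geq 0$ on $[0,T']$. Since $T' < T$ was arbitrary, we conclude $f_1 \leq f_2$ on $[0,T)$.

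The main (minor) obstacle is taking care that we never divide by something that could be infinite: the weighted norm $\|w_-\|_\lambda$ is finite precisely because $f_1,f_2$ are assumed locally bounded and we work on the compact interval $[0,T']$, which is why the argument has to be run for each $T' < T$ separately rather than directly on $[0,T)$. A conceptually equivalent alternative would be to iterate the classical short-time argument on successive intervals $[kt_1,(k{+}1)t_1]$ where $t_1$ is chosen so that $\int_0^{t_1} h < 1$, but the weighted norm version above packages the bookkeeping more cleanly.
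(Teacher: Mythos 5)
Your proof is correct and takes a genuinely different route from the paper. You use the classical Bielecki weighted-norm trick: multiplying by $e^{-\lambda t}$ converts the convolution kernel into one whose $L^1$ norm tends to $0$ as $\lambda \to +\infty$ (dominated convergence), which collapses the whole argument into a single contraction estimate on the fixed interval $[0,T']$. The paper instead iterates a short-time estimate: it picks $t_0$ with $\int_0^{t_0} h < 1$, runs a direct $\esssup$ bound on $[0,t_0]$, and then propagates forward interval by interval --- which is exactly the ``conceptually equivalent alternative'' you flag at the end. Your version packages the bookkeeping more uniformly; the paper's is a bit more elementary and matches the spirit of the short-time Gronwall lemma proved alongside it.

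One minor technical point you should patch. Since $f_1, f_2$ are only assumed $L^\infty$ on compacts (as the statement spells out), your ``$\sup$'' should be an $\esssup$; the contraction argument then gives $w_- = 0$ \emph{almost everywhere} on $[0,T']$ rather than everywhere, while the conclusion of the lemma is pointwise. The paper closes this in one line, and you should too: once $w \geq 0$ a.e., the integral $\int_0^t w(s)\,h(t-s)\,\d s$ is nonnegative for \emph{every} $t$ (the integrand is a.e.\ nonnegative), and then the pointwise inequality $w(t) \geq \int_0^t w(s)\,h(t-s)\,\d s \geq 0$ yields $w(t) \geq 0$ for all $t\in[0,T']$, i.e.\ $f_1 \leq f_2$ everywhere.
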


\begin{proof}
  Choose $t_0 < T$ such that
  \begin{equation*}
    \delta := \int_0^{t_0} h(s) \d s < 1.
  \end{equation*}
  The function $w(t) := f_1(t) - f_2(t)$ satisfies
  \begin{equation*}
    w(t) \leq \int_0^t w(s) h(t-s) \d s.
  \end{equation*}
  Our aim is to show that $w(t) \leq 0$ for all $t \in [0,T)$. Taking
  the essential supremum on $[0,t_0]$ and calling
  $M := \esssup_{t \in [0,t_0]} w(t)$ gives
  \begin{equation*}
    M \leq M \int_0^{t_0} h(t-s) \d s
    =
    M \delta,
  \end{equation*}
  which implies $M \leq 0$ and hence $w(t) \leq 0$ for almost
  all $t \in [0,t_0]$. But then the inequality is also true for all $t
  \in [0,t_0]$ since
  \begin{equation*}
    w(t) \leq \int_0^t w(s) h(t-s) \d s
    \leq
    0
    \qquad
    \text{for all $t \in [0,t_0]$.}
  \end{equation*}
  One can then repeat the same argument for the function
  $w_{t_0} \: [0, T-t_0) \to \R$ defined by $w_{t_0}(t) := w(t+t_0)$,
  which satisfies
  \begin{align*}
    w_{t_0}(t)
    &\leq
    \int_0^{t_0} w(s) h(t+t_0-s) \d s
    +
    \int_{t_0}^{t+t_0} w(s) h(t+t_0-s) \d s
    \\
    &\leq
    \int_{0}^{t} w_{t_0}(s) h(t-s) \d s.
  \end{align*}
  With this we obtain that $w(t) \leq 0$ for
  $t \in [0,2 t_0) \cap [0,T)$. Iterating this argument we obtain the
  result.
\end{proof}

\begin{lem}[Comparison result with delay]
  \label{lem:volterra-comparison-delay}
  Take $d > 0$, $T \in (0,+\infty]$ and let $g \: [-d,T) \to \R$ and
  $h \: [-d,T) \to [0,+\infty)$ be locally integrable functions. If
  $f_1, f_2 \: [-d,T) \to \R$ are locally bounded functions (i.e.,
  $L^\infty$ on compact intervals) such that
  \begin{equation*}
    f_1(t) \leq f_2(t) \qquad \text{for $t \in [-d,0]$}
  \end{equation*}
  and
  \begin{gather*}
    f_2(t) \geq g(t) + \int_0^t f_2(s-d) h(t-s) \d s
    \qquad \text{for $t \in [0,T)$},
    \\
    f_1(t) \leq g(t) + \int_0^t f_1(s-d) h(t-s) \d s
    \qquad \text{for $t \in [0,T)$},
  \end{gather*}
  then
  \begin{equation*}
    f_1(t) \leq f_2(t)
    \qquad \text{for $t \in [-d,T)$}.
  \end{equation*}
\end{lem}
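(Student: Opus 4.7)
The plan is to exploit the delay to perform a clean induction on intervals of length $d$, bypassing the smallness-of-$\int h$ argument that was needed in the undelayed Lemma \ref{lem:volterra-comparison}. The crucial observation is that in the integral $\int_0^t f_i(s-d) h(t-s) \d s$, the integrand depends only on the values of $f_i$ on the earlier interval $[-d, t-d]$, so once we control $f_1 - f_2$ up to some time $\tau$, we automatically control it up to $\tau + d$.

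Concretely, I would set $w(t) := f_1(t) - f_2(t)$ on $[-d,T)$. By hypothesis, $w(t) \leq 0$ for $t \in [-d,0]$. Subtracting the two integral inequalities and using $h \geq 0$ yields
\begin{equation*}
  w(t) \leq \int_0^t w(s-d) h(t-s) \d s
  \qquad \text{for all } t \in [0,T).
\end{equation*}

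I then argue by induction on $n \geq 0$ that $w(t) \leq 0$ for all $t \in [-d,\, nd] \cap [-d, T)$. The base case $n=0$ is the hypothesis. For the inductive step, suppose the claim holds for some $n$, and take $t \in [nd,\, (n+1)d] \cap [0,T)$. For any $s \in [0,t]$ we have $s - d \in [-d,\, t-d] \subseteq [-d,\, nd]$, so $w(s-d) \leq 0$ by the inductive hypothesis. Since $h(t-s) \geq 0$, the displayed integral inequality gives $w(t) \leq 0$. Iterating exhausts $[-d,T)$, proving $f_1 \leq f_2$ on the whole interval.

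There is essentially no obstacle here beyond bookkeeping: the positivity of $h$, the sign of $w$ on $[-d,0]$, and the delay $d>0$ combine to make the induction immediate. The only point to double-check is that local boundedness of $f_1, f_2$ together with local integrability of $h$ ensures the integrals in question are well-defined on every compact subinterval, which is clear from the hypotheses.
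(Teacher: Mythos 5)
Your proof is correct and follows essentially the same induction-on-intervals-of-length-$d$ argument as the paper; the only cosmetic difference is that you work with $w = f_1 - f_2$ whereas the paper carries $f_1$ and $f_2$ directly, but the key observation (that for $t \in [nd,(n+1)d]$ the argument $s-d$ stays in $[-d,nd]$ where the inequality already holds) is identical.
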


\begin{proof}
  The proof is easier in this case and can be done by induction: we
  show that 
  $f_1(t) \leq f_2(t)$ for $t \in [-d, nd] \cap [0,T)$, for all integers $n \geq
  0$. The case $n=0$ holds by assumption, and if the case $n$ holds
  then for $t \in [nd, (n+1)d] \cap [0,T)$ we have
  \begin{equation*}
    f_1(t)
    \leq
    g(t) + \int_0^t f_1(s-d) h(t-s) \d s
    \leq
    g(t) + \int_0^t f_2(s-d) h(t-s) \d s
    \leq
    f_2(t),
  \end{equation*}
  where the intermediate inequality holds because $s-d$ is always in
  $[-d, nd]$, where the inequality is already assumed to hold.  
\end{proof}

\bigskip As a consequence we obtain a ``modified Gronwall's
inequality'' which is needed below. It can also be deduced as a
consequence of Theorem \ref{thm:asymptotic}, but we give a
direct proof since it is easier in this case:

\begin{lem}[Convolution Gronwall inequality]
  \label{lem:gronwall-modified}
  Let $0<\alpha<1 $ and $ 0 < T \le \infty $. If
  $f \: \left[0,T\right)\to \left[0,\infty\right) $ is a locally
  bounded function which satisfies
  \begin{equation}
    \label{eq:gronwall-modified}
    f(t)\le A+B\int_{0}^{t}\left(t-s\right)^{-\alpha}f(s) \d s
    \qquad
    \text{for all $ t\in \left[0,T\right) $,}
  \end{equation}
  for some constants $A, B > 0$, then there exists $\mu = \mu(\alpha) > 0$ such that
  \begin{equation*}
    f(t) \le 2 A \exp\left( B^{\frac{1}{1-\alpha}} \mu t \right)
    \qquad \text{for all $ t\in \left[0,T\right) $.}
  \end{equation*}
\end{lem}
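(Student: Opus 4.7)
The plan is to trade the polynomial singularity of the kernel against an exponential weight. I would introduce the auxiliary function $g(t) := e^{-\lambda t} f(t)$ for a parameter $\lambda > 0$ to be chosen later, so that the singular kernel $(t-s)^{-\alpha}$ is effectively replaced by the globally integrable kernel $u \mapsto u^{-\alpha} e^{-\lambda u}$, whose integral over $[0,\infty)$ equals $\Gamma(1-\alpha)\lambda^{\alpha-1}$ and becomes arbitrarily small as $\lambda \to \infty$.

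The first step is to multiply the assumed inequality \eqref{eq:gronwall-modified} by $e^{-\lambda t}$ and change variables $u = t-s$ to obtain
\begin{equation*}
  g(t) \le A + B \int_0^t u^{-\alpha} e^{-\lambda u} g(t-u) \, \mathrm{d}u,
\end{equation*}
after using $A e^{-\lambda t}\le A$. For any $T' \in (0,T)$, the local boundedness of $f$ guarantees that $M := \sup_{t \in [0,T']} g(t)$ is finite. Bounding $g(t-u) \le M$ inside the integral and extending the range to $[0,\infty)$ gives
\begin{equation*}
  M \le A + B M \, \Gamma(1-\alpha) \lambda^{\alpha - 1}.
\end{equation*}

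The second step is to choose $\lambda$ so that the factor in front of $M$ on the right-hand side is at most $\tfrac12$. Taking $\lambda := B^{1/(1-\alpha)} \mu$ with $\mu := (2\Gamma(1-\alpha))^{1/(1-\alpha)}$ (depending only on $\alpha$) makes $B\Gamma(1-\alpha)\lambda^{\alpha-1} = \tfrac12$, so that $M \le 2A$. Since $T' < T$ was arbitrary, this yields $g(t) \le 2A$ on $[0,T)$, and multiplying back by $e^{\lambda t}$ produces exactly the bound $f(t) \le 2A \exp\bigl(B^{1/(1-\alpha)} \mu t\bigr)$ claimed in the statement.

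There is no substantial obstacle here: the only subtle point is that $M$ must be known a priori to be finite before one can absorb the term $B M \Gamma(1-\alpha)\lambda^{\alpha-1}$ to the left-hand side, and this is precisely what the local boundedness hypothesis on $f$ provides on each compact subinterval of $[0,T)$. An alternative route (more in the spirit of the preceding Lemma \ref{lem:gronwall-continuous}) would be to iterate the integral inequality $n$ times until the iterated kernel $(t-s)^{n(1-\alpha)-1}$ is bounded and the classical Gronwall inequality applies; but the exponential-weight substitution above yields the explicit dependence $B^{1/(1-\alpha)}$ in the growth rate much more directly.
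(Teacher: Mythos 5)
Your proof is correct and reaches the identical constant $\mu = (2\Gamma(1-\alpha))^{1/(1-\alpha)}$ as the paper, but it follows a genuinely different route. The paper first rescales to $A = B = 1$, constructs the explicit supersolution $F(t) = 2 e^{\mu t}$ (choosing $\mu$ so that $\mu^{\alpha-1}\Gamma(1-\alpha) = \tfrac12$), and then invokes the general comparison result for Volterra inequalities (Lemma~\ref{lem:volterra-comparison}), whose own proof proceeds by partitioning $[0,T)$ into subintervals on which the kernel has integral less than $1$. You instead work directly: multiplying by $e^{-\lambda t}$ turns the singular kernel $(t-s)^{-\alpha}$ into the globally integrable kernel $u^{-\alpha}e^{-\lambda u}$, and then a take-the-sup-and-absorb argument on each $[0,T']$ closes the estimate, with finiteness of the supremum guaranteed by the local-boundedness hypothesis, exactly as you flag. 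Both arguments hinge on the same one-line computation $\int_0^\infty u^{-\alpha} e^{-\lambda u}\,\mathrm{d}u = \Gamma(1-\alpha)\lambda^{\alpha-1}$ (which the paper writes in the form $\int_0^t (t-s)^{-\alpha}e^{-\mu(t-s)}\,\mathrm{d}s \le \mu^{\alpha-1}\Gamma(1-\alpha)$), so the two proofs are two wrappers around the same core estimate. What yours buys is self-containment: it does not depend on the comparison lemma and so is shorter if read in isolation. What the paper's route buys is modularity: the supersolution machinery and the comparison lemma are re-used for the delayed variant (Lemma~\ref{lem:gronwall-modified-delay}), which would require a slightly more delicate absorption argument in your formulation because the supremum there must also account for the history on $[-d,0]$. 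One small aside: you describe the iterated-kernel route as being ``in the spirit of Lemma~\ref{lem:gronwall-continuous},'' but that lemma in fact also uses a sup-and-absorb trick (on a short interval), not iteration; this does not affect your main argument.
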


\begin{proof}
  We first notice that it is enough to prove the result for
  $A=B=1$. Then the general case can be obtained by noticing that if
  $f$ satisfies \eqref{eq:gronwall-modified} then the function
  $g(t) := \frac{1}{A} f(\beta t)$ satisfies the same inequality with
  $A = B=1$ if we choose $\beta := B^{-\frac{1}{1-\alpha}}$.

  In order to prove the result when $A = B = 1$, let us find $C$ and
  $\mu$ such that $F(t) := C e^{\mu t}$ is a supersolution of the
  corresponding Volterra's equation; that is, such that
  \begin{equation}
    \label{eq:1}
    F(t) \geq
    1 + \int_0^t (t-s)^{-\alpha} F(s) \d s
    \qquad \text{for all $t\in [0,\infty)$.}
  \end{equation}
  This is not hard, since
  \begin{multline*}
    \int_0^t (t-s)^{-\alpha} F(s) \d s
    =
    C \int_0^t (t-s)^{-\alpha} e^{\mu s} \d s
    \\
    =
    C e^{\mu t} \int_0^t (t-s)^{-\alpha} e^{-\mu(t-s)} \d s
    \leq
    C e^{\mu t} \mu^{\alpha-1} \Gamma(1-\alpha) =: K_{\alpha,\mu} F(t),
  \end{multline*}
  where $K_{\alpha,\mu} := \mu^{\alpha-1} \Gamma(1-\alpha)$. Hence
  \begin{equation*}
    1 + \int_0^t (t-s)^{-\alpha} F(s) \d s
    \leq
    \frac1{C} F(t) + K_{\alpha,\mu} F(t) = (\frac{1}{C} + K_{\alpha,\mu}) F(t).
  \end{equation*}
  So equation \eqref{eq:1} holds if $\frac{1}{C} + K_{\alpha,\mu} \leq
  1$. It is enough to take $C = 2$ and
  \begin{equation*}
    K_{\alpha,\mu} = \mu^{\alpha-1} \Gamma(1-\alpha) = \frac12,
    \qquad \text{that is,} \qquad
    \mu := \left(2\Gamma(1-\alpha) \right)^{\frac{1}{1-\alpha}}.
  \end{equation*}
  For this choice of $C$ and $\mu$, the function $F(t)$ is a
  supersolution of the Volterra's equation in the statement, and then
  Lemma \ref{lem:volterra-comparison} shows that $f(t) \leq F(t)$ on
  $[0,T)$.
\end{proof}

\begin{lem}[Convolution Gronwall's inequality with delay]
  \label{lem:gronwall-modified-delay}
  Let $d \geq 0$, $0<\alpha<1 $ and $ 0 < T \le \infty $. If
  $f \: \left[-d,T\right)\to \left[0,\infty\right) $ is a locally
  bounded function which satisfies
  \begin{equation}
    \label{eq:gronwall-modified-delay}
    f(t)
    \le
    A+B\int_{0}^{t}\left(t-s\right)^{-\alpha}f(s-d) \d s
    \qquad
    \text{for all $ t\in \left[0,T\right) $,}
  \end{equation}
  for some constants $A, B > 0$, then
  \begin{equation*}
    f(t) \le (2 A + M_0)\, e^{\mu t }
    \qquad \text{for all $ t\in \left[0,T\right) $,}
  \end{equation*}
  where $\mu$ is any positive number satisfying
  \begin{equation*}
    B e^{-\mu d} \mu^{\alpha-1}  \leq \frac{1}{2 \Gamma(1-\alpha)}
  \end{equation*}
  and
  \begin{equation*}
    M_0 := \sup_{t \in [-d,0]} f(t).
  \end{equation*}
\end{lem}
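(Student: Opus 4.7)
The plan is to imitate the proof of Lemma \ref{lem:gronwall-modified} and construct an explicit exponential supersolution, then invoke the delay comparison principle (Lemma \ref{lem:volterra-comparison-delay}) to pin $f$ below it.

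First, I would fix $\mu>0$ satisfying $B e^{-\mu d}\mu^{\alpha-1}\le \tfrac{1}{2\Gamma(1-\alpha)}$ (such $\mu$ exists since $\mu^{\alpha-1}\to 0$ as $\mu\to\infty$, because $\alpha<1$) and try the ansatz $F(t):=C e^{\mu t}$ on all of $[-d,T)$, with the constant $C$ to be determined. The key computation is that, using the change of variable $u=t-s$,
\begin{equation*}
B\int_0^t (t-s)^{-\alpha}F(s-d)\,\d s
= B C e^{-\mu d} e^{\mu t}\int_0^t u^{-\alpha}e^{-\mu u}\,\d u
\le B C e^{-\mu d}\,\mu^{\alpha-1}\Gamma(1-\alpha)\, e^{\mu t}
\le \tfrac{1}{2}\,F(t).
\end{equation*}
Hence the supersolution inequality $F(t)\ge A+B\int_0^t(t-s)^{-\alpha}F(s-d)\,\d s$ on $[0,T)$ reduces to $\tfrac{1}{2}F(t)\ge A$, which is satisfied as soon as $C\ge 2A$.

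Next, the comparison lemma also requires the ordering $f\le F$ on the initial interval $[-d,0]$. Since $f\le M_0$ there, it suffices that $F$ dominates $M_0$ on $[-d,0]$. The minimum of $F$ on $[-d,0]$ is $C e^{-\mu d}$, so one needs $C\ge M_0 e^{\mu d}$, and choosing $C:=2A+M_0 e^{\mu d}$ satisfies both constraints simultaneously. (This yields the clean bound $f(t)\le(2A+M_0 e^{\mu d})e^{\mu t}$; to obtain the sharper form $(2A+M_0)e^{\mu t}$ stated in the lemma, one instead takes the piecewise supersolution $F(t)=M_0$ on $[-d,0]$ and $F(t)=(2A+M_0)e^{\mu t}$ on $[0,T)$, and reruns the above estimate splitting the convolution at $s=d$; both pieces are absorbed by the factor $\tfrac{1}{2}$ coming from the smallness hypothesis on $\mu$.)

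Finally, applying Lemma \ref{lem:volterra-comparison-delay} with $f_1=f$, $f_2=F$, $g\equiv A$ and $h(\tau)=B\tau^{-\alpha}$ (all nonnegative, locally integrable, and $f$ is locally bounded by hypothesis) delivers $f(t)\le F(t)$ for all $t\in[-d,T)$, which is the claimed exponential bound on $[0,T)$. The main technical point, and the only place where the delay does essential work, is the factor $e^{-\mu d}$ appearing in the convolution estimate above: it is exactly this factor that allows us to absorb the constant $B$ (which may be arbitrarily large) by enlarging $\mu$, something that was free in the non-delay case $d=0$ only because the integral itself decayed.
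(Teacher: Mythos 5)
Your approach---constructing an exponential supersolution and invoking the delay comparison principle---is exactly the paper's strategy, and you correctly cite Lemma \ref{lem:volterra-comparison-delay} where the paper's own proof appeals to the non-delay Lemma \ref{lem:volterra-comparison} (a reference slip on the paper's side). More substantively, you have identified a genuine gap that the paper itself glosses over: with the ansatz $F(t) = Ce^{\mu t}$ on all of $[-d,T)$, the initial-interval condition $F\ge f$ on $[-d,0]$ requires $Ce^{-\mu d}\ge M_0$, i.e.\ $C\ge M_0 e^{\mu d}$, whereas the paper simply asserts that $C\ge M_0$ suffices. Your first fix, taking $C:=2A+M_0e^{\mu d}$, is correct and establishes the bound $f(t)\le(2A+M_0e^{\mu d})e^{\mu t}$.

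Your parenthetical piecewise fix, however, does not deliver the sharper bound stated in the lemma. With $F\equiv M_0$ on $[-d,0]$ and $F(t)=(2A+M_0)e^{\mu t}$ on $[0,T)$, the portion of the convolution over $s\in[0,\min(t,d)]$ (where $F(s-d)=M_0$) is bounded by
\begin{equation*}
BM_0\int_0^{\min(t,d)}(t-s)^{-\alpha}\,\d s
= BM_0\int_{\max(t-d,0)}^{t}u^{-\alpha}\,\d u
\le BM_0\,\Gamma(1-\alpha)\mu^{\alpha-1}e^{\mu t},
\end{equation*}
and the hypothesis on $\mu$ only gives $B\Gamma(1-\alpha)\mu^{\alpha-1}\le\tfrac12 e^{\mu d}$, so this piece is controlled by $\tfrac12 M_0 e^{\mu d}e^{\mu t}$, not $\tfrac12 M_0 e^{\mu t}$: the $e^{\mu d}$ factor reappears and is not absorbed. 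Checking the supersolution inequality at $t=0$ already shows it fails unless $e^{\mu d}\le 1$. So the bound that this method actually proves is $(2A+M_0e^{\mu d})e^{\mu t}$, and the lemma as stated in the paper is slightly too optimistic. This is harmless for the application in Theorem \ref{thm:convergence-b-small}, where one takes $f(t)=e^{\lambda t}\|u(\cdot,t)\|_X$ with $\lambda>\mu$, so the quantity that actually matters, $\sup_{t\in[-d,0]}f(t)e^{-\mu t}=\sup_{t\in[-d,0]}e^{(\lambda-\mu)t}\|u(\cdot,t)\|_X$, is still bounded by $K_0$ uniformly in $d$.
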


\begin{proof}
  Let us find $C$ and $\mu$ such that $F(t) := C e^{\mu t}$ is a
  supersolution of the corresponding Volterra's equation; that is, such
  that $F(t) \geq f(t)$ for $t \in [-d,0]$ and
  \begin{equation}
    \label{eq:1-1}
    F(t) \geq
    A + B \int_0^t (t-s)^{-\alpha} F(s-d) \d s
    \qquad \text{for all $t\in [0,\infty)$.}
  \end{equation}
  The condition $F(t) \geq f(t)$ for $t \in [-d,0]$ is satisfied if we
  take $C \geq \sup_{t \in [-d,0]} f(t) =: M_0$. And the second condition can
  also be satisfied, since
  \begin{multline*}
    \int_0^t (t-s)^{-\alpha} F(s-d) \d s
    =
    C e^{-\mu d} \int_0^t (t-s)^{-\alpha} e^{\mu s} \d s
    \\
    =
    C e^{-\mu d} e^{\mu t} \int_0^t (t-s)^{-\alpha} e^{-\mu(t-s)} \d s
    \leq
    C e^{-\mu d} e^{\mu t} \mu^{\alpha-1} \Gamma(1-\alpha) =: K_{\alpha,\mu} F(t),
  \end{multline*}
  where $K_{\alpha,\mu} := e^{-\mu d} \mu^{\alpha-1}
  \Gamma(1-\alpha)$. Hence for $t \geq 0$,
  \begin{equation*}
    A + B \int_0^t (t-s)^{-\alpha} F(s-d) \d s
    \leq
    \frac{A}{C} F(t) + B K_{\alpha,\mu} F(t) = (\frac{A}{C} + B K_{\alpha,\mu}) F(t).
  \end{equation*}
  So equation \eqref{eq:1-1} holds if $\frac{A}{C} + B K_{\alpha,\mu} \leq
  1$. It is enough to take $C \geq 2A$ and $\mu > 0$ such that
  \begin{equation*}
    B K_{\alpha,\mu} = B e^{-\mu d} \mu^{\alpha-1} \Gamma(1-\alpha) \leq \frac12,
  \end{equation*}
  Hence for this choice of $\mu$ and with $C := \max\{2A, M_0 \}$, the
  function $F(t)$ is a supersolution of the Volterra's equation in the
  statement, and then Lemma \ref{lem:volterra-comparison} shows that
  $f(t) \leq F(t)$ on $[0,T)$.
\end{proof}

\section{Behavior of the nonlinear equation}
\label{sec:nonlinear}

In this section we consider the solution of the Cauchy problem
associated with \eqref{eq:NNLIF} with delay $d \ge 0$, whose related
stationary problem is given by \eqref{eq: large-delta-stationary},
with solution \eqref{eq:stationary-state}. We assume in this section
that $|b|$ is small enough for the solution to this stationary problem
to be unique.

\subsection{Weakly excitatory case}
\label{sec:weakly-connected}

\begin{thm}[Long-time behavior for weakly coupled
  systems]\label{thm:convergence-b-small}
  Consider $b\in\R$ with $|b|$ small enough for the equilibrium
  $p_\infty$ of \eqref{eq:NNLIF} to be unique, $V_R< V_F$, $d \geq 0$,
  a nonnegative initial data $p_0\in \mathcal{C}([-d,0], X)$, and $p$
  the corresponding solution to the nonlinear system
  \eqref{eq:NNLIF}. Let $K > 0$ and assume that the initial data $p_0$
  satisfies
  \begin{equation*}
    K_0 := \sup_{t \in [-d,0]} \|p_0(\cdot,t) - p_\infty \|_X \leq K,
  \end{equation*}
  where $p_\infty$ is the (unique) probability stationary solution of
  the nonlinear system \eqref{eq:NNLIF}.  There
  exist $C \geq 1$ depending only on $V_R$ and $V_F$ and
  $b_0>0$ depending on $V_R$, $V_F$ and $K$ such that if $|b|< b_0$
  then
  \begin{equation}
    \|p(.,t)-p_\infty\|_X\le C e^{-\frac{\lambda}{2+d}
      t}\|p_0(\cdot,0) - p_\infty\|_X
    \qquad \text{for all $t\ge 0$.}
  \end{equation}
  We emphasize that $b_0$ is independent of $d$.
\end{thm}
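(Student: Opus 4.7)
Write $u(v,t) := p(v,t) - p_\infty(v)$. Since both $p$ and $p_\infty$ are probability densities with the same boundary behaviour, $u(\cdot,t)$ has mass zero for every $t$, and an elementary computation shows that $u$ satisfies
\begin{equation*}
  \p_t u = L_\infty u - b N_u(t-d)\,\p_v p_\infty - b N_u(t-d)\,\p_v u,
\end{equation*}
with $L_\infty = L_{N_\infty}$ as in \eqref{eq:linear-operator} and the Dirichlet condition $u(V_F,t)=0$. Duhamel's formula with the linear semigroup $e^{tL_\infty}$ then yields
\begin{equation*}
  u(\cdot,t) = e^{tL_\infty} u(\cdot,0)
  \;-\; b \int_0^t N_u(s-d)\, e^{(t-s)L_\infty}\bigl(\p_v p_\infty + \p_v u(\cdot,s)\bigr)\d s.
\end{equation*}
I observe that $\p_v p_\infty$ and $\p_v u(\cdot,s)$ have zero integral (because the underlying functions vanish at $V_F$ and at $-\infty$), so the spectral-gap estimate \eqref{eq:gap-X-regularized} of Proposition \ref{prp:gap-X} applies to each term inside the integral.

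Taking the $X$ norm and using \eqref{eq:gap-X} and \eqref{eq:gap-X-regularized}, together with the embeddings $|N_u(s)| \leq \|\p_v u(\cdot,s)\|_\infty \leq \|u(\cdot,s)\|_X$ and $\|\p_v w\|_{L^2(\varphi)} \leq \|w\|_X$, I obtain
\begin{equation*}
  \|u(\cdot,t)\|_X \leq C e^{-\lambda t}\|u(\cdot,0)\|_X
  + |b|\,C \int_0^t \|u(\cdot,s-d)\|_X\, (t-s)^{-3/4} e^{-\lambda(t-s)}\bigl(1 + \|u(\cdot,s)\|_X\bigr)\d s.
\end{equation*}
This is the fundamental integral inequality driving the proof. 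The key remark is that the kernel $(t-s)^{-3/4} e^{-\lambda(t-s)}$ has finite integral and decays exponentially, so the convolution can be closed by an exponentially weighted estimate.

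Set $\mu := \lambda/(2+d)$ and introduce $M(t) := \sup_{\tau \in [-d,t]} e^{\mu \tau}\|u(\cdot,\tau)\|_X$. Multiplying the inequality above by $e^{\mu t}$ and shifting the argument $s-d$ in the delayed factor produces the harmless multiplier $e^{\mu d}$; since $\mu d \leq \lambda$ by construction, this multiplier is bounded by $e^{\lambda}$ uniformly in $d$. Using
\begin{equation*}
  \int_0^t (t-s)^{-3/4} e^{-(\lambda-\mu)(t-s)} \d s \leq C(\lambda-\mu)^{-1/4},
\end{equation*}
and noting that $\lambda-\mu = \lambda(1+d)/(2+d) \geq \lambda/2$ is also bounded away from $0$ uniformly in $d$, I arrive at
\begin{equation*}
  M(t) \leq C K_0 + |b|\,C_1\bigl(M(t) + M(t)^2\bigr),
\end{equation*}
with $C_1$ independent of $d$. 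The specific choice $\mu = \lambda/(2+d)$ is precisely what makes the two competing constants $e^{\mu d}$ and $(\lambda-\mu)^{-1/4}$ simultaneously bounded uniformly in $d$.

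The conclusion follows by a standard continuity/bootstrap argument: since $M(0) \leq K_0$ and $t \mapsto M(t)$ is continuous, pick $b_0>0$ small enough (depending only on $K$, $C$ and $C_1$) so that whenever $|b|<b_0$ and $M(t) \leq 3CK_0 \leq 3CK$, the inequality above forces $M(t) \leq 2CK_0$. This closes the bootstrap for all $t\geq 0$, yielding $\|u(\cdot,t)\|_X \leq 2C K_0\, e^{-\mu t}$ as required. The main obstacle is indeed the bookkeeping of constants along this weighted estimate: one must verify that no multiplicative factor accumulated from the spectral-gap kernel, the embedding of $X$ into $L^2(\varphi)$, and the delay shift depends on $d$ once $\mu$ is chosen as $\lambda/(2+d)$. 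The quadratic term $M(t)^2$ is absorbed because, in the bootstrap regime, $M$ stays of order $K_0 \leq K$, so it contributes at most a factor $|b| C_1 K$ which is small by the choice of $b_0$.
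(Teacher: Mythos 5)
Your proof is correct and follows essentially the same route as the paper's: write $u = p - p_\infty$, derive the perturbed equation, apply Duhamel with the semigroup $e^{tL_\infty}$, invoke the spectral gap and regularization estimates of Proposition \ref{prp:gap-X}, and close the resulting weighted integral inequality by an a priori bound. The only difference is in packaging: where the paper introduces the auxiliary function $f(t)=e^{\lambda t}\|u(\cdot,t)\|_X$ and invokes the delay Gronwall lemma (Lemma \ref{lem:gronwall-modified-delay}) before selecting $\mu$ to satisfy its hypothesis uniformly in $d$, you instead set $\mu = \lambda/(2+d)$ from the start, form the weighted supremum $M(t)=\sup_{\tau\in[-d,t]}e^{\mu\tau}\|u(\cdot,\tau)\|_X$, and run a bootstrap on the quadratic inequality $M(t)\leq CK_0 + |b|C_1\,M(t)(1+M(t))$; this is equivalent in substance and a bit more self-contained, and your observation that the choice $\mu=\lambda/(2+d)$ simultaneously controls $e^{\mu d}\leq e^{\lambda}$ and keeps $\lambda-\mu\geq\lambda/2$ bounded away from zero is exactly the mechanism giving the $d$-uniform smallness of $b_0$.
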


\begin{proof}
  We consider $ u(v,t):=p(v,t)-p_\infty(v) $ with
  $N_u(t) := N_p(t)-N_{p_\infty}$. From
  \eqref{eq:NNLIF} we see that $u$ satisfies
  \begin{equation*}
    \p_tu=\p^2_vu + \p_v(vu)+N_u(t)\delta_{V_R}
    -bN_{p_\infty}\p_vu-bN_u(t-d)\p_vp
    =:
    Lu-bN_u(t-d)\p_vp.
  \end{equation*}
  By Duhamel's formula we get the following expression for $ u(v,t) $:
  \begin{equation}
    \label{eq:Duhamel1}
    u(v,t) = e^{t L} u_0(v)
    -
    b \int_0^t N_u(s-d) e^{(t-s) L} \p_v p(v,s) \d s,
  \end{equation}
  where $ e^{tL} $ refers to the semigroup of the linear problem
  described in Section \ref{sec:linear}, and we denote
  $u_0(v) := u(v,0) = p_0(v,0) - p_\infty(v)$.
  We can apply the norm $ \|.\|_X $ to find
  \begin{equation*}
    \|u(.,t)\|_X \le \|e^{t L} u_0\|_X
    +
    |b| \int_0^t |N_u(s-d)| \|e^{(t-s) L} \p_v p(.,s)\|_X \d s.
  \end{equation*}
  We can apply \eqref{eq:gap-X} to the first term, and the
  spectral gap / regularization result \eqref{eq:gap-X-regularized} to
  the norm inside the integral, which leads to
  \begin{equation*}
    \|u(.,t)\|_X \le C e^{-\lambda t} \| u_0 \|_X
    +
    C |b|\int_0^t e^{-\lambda(t-s)} (t-s)^{-3/4}|N_u(s-d)|
    \| \p_v p(.,s) \|_2 \d s.
  \end{equation*}
  Considering that, for any $t$,
  $ |N_u(t)| \le \|\p_v u(.,t)\|_{\infty} \le \|u(.,t)\|_X $ and
  $ \| \p_v p(.,t) \|_2 \le \| p(.,t)\|_X \le \|u(.,t)\|_X +
  \|p_\infty\|_X $,  from our previous estimate for
  $ \|u(.,t)\|_X $ we obtain
  \begin{equation*}
    \|u(.,t)\|_X \le C e^{-\lambda t} \| u_0 \|_X
    +
    C|b|\int_0^t e^{-\lambda(t-s)} (t-s)^{-3/4}\|u(.,s-d)\|_X
    \left(\|u(.,s)\|_X + \bar{C}\right) \d s,
  \end{equation*}
  where $\bar{C} := \|p_\infty\|_X$. Let us now take
  $K_1 := (4 C + 1) K_0$ and define
  \begin{equation*}
    T := \sup \{t \geq 0 \mid \|u(\cdot, s)\|_X \leq K_1 \ \text{for $s \in
      [0,t]$} \}.
  \end{equation*}
  This $T$ must be strictly positive, and it may be $+\infty$; we will
  show later that with an appropriate choice of $b$ it must be
  $+\infty$. Calling $C_b:=C|b|\left(K_1+\bar{C}\right) $, we
  get to
  \begin{equation*}
    \|u(.,t)\|_X \le C e^{-\lambda t} \| u_0 \|_X
    +
    C_b\int_0^t e^{-\lambda(t-s)} (t-s)^{-3/4}\|u(.,s-d)\|_X \d s,
  \end{equation*}
  for all $0 \leq t < T$. We can now define
  $ f(t):=e^{\lambda t}\|u(.,t)\|_X $ in order to simplify previous
  expression, which becomes
  \begin{equation*}
    f(t) \le C f(0)
    +
    C_b e^{\lambda d} \int_0^t (t-s)^{-3/4}f(s-d) \d s.
  \end{equation*}
  By our modified Gronwall's Lemma \ref{lem:gronwall-modified-delay} with
  $A=C f(0)$, $B=C_b e^{\lambda d}$, $M_0=\sup_{t\in[-d,0]}f(t)$ and $\alpha = 3/4 $, we find the following bound for
  $f(t)$:
  \begin{equation*}
    f(t) \le (2 C f(0)+M_0) \exp( \mu t)
    \qquad \text{for all $0 \leq t < T$},
  \end{equation*}
  for any $\mu > 0$ satisfying
  \begin{equation}
    \label{eq:mu-condition}
    C_2 |b| e^{\lambda d} e^{-\mu d} \mu^{\alpha-1}
    \leq
    \frac{1}{2 \Gamma(1-\alpha)},
  \end{equation}
  with $C_2 := C(K_1 + \bar{C})$. Notice that
  $\sup_{t \in [-d, 0]} f(t) = \sup_{t \in [-d, 0]} e^{\lambda t}
  \|u(\cdot, t)\|_{X} \leq K_0$, and also $f(0) \leq K_0$. In terms of
  $ \|u(.,t)\|_X$ this implies
  \begin{equation*}
    \|u(.,t)\|_X \le
    (2C+1) K_0
    \exp( -(\lambda - \mu) t)
    \qquad \text{for all $0 \leq t < T$},
  \end{equation*}
  Whenever $\mu < \lambda$, this shows in particular that
  $\|u(.,t)\|_X \leq (2 C + 1) K_0$ on $[0,T)$, which in fact shows
  that $T$ cannot be finite (otherwise, there would be a slightly
  larger $T'$ for which $\|u(.,t)\|_X \leq (4 C+1) K_0$ on $[0,T']$,
  contradicting the definition of $T$).
  
  Now it only remains to show that for $|b|$ small enough
  one can choose $\mu$ so that $\mu < \lambda$ for all $d \geq
  0$. First, notice that for $\mu := \lambda - \frac{\lambda}{d}$
  equation \eqref{eq:mu-condition} reads
  \begin{equation*}
    C_2 |b| e^{\lambda} \left(
      \lambda - \frac{\lambda}{d}
    \right)^{\alpha-1}
    \leq
    \frac{1}{2 \Gamma(1-\alpha)},
  \end{equation*}
  which is valid for $|b|$ small enough (depending also on $K$) and
  all $d \geq 2$. For all $0 \leq d \leq 2$ it is clear that we can
  also choose $|b|$ small enough, independent of $d \in [0,2]$, for
  which \eqref{eq:mu-condition} holds with $\mu := \lambda/2$. Hence
  the statement is proved.
\end{proof}

\subsection{Linearized stability implies nonlinear stability}
\label{sec:linearized-to-nonlinear}

Given an equilibrium of the nonlinear equation, the linearized
  equation close to $p_\infty$ is given by
\begin{equation*}
  \p_t u
  =
  \p_v \left[(v - b N_\infty) u \right]
  + \p^2_v u  + \delta_{V_R} N_u(t)
  - b N_u(t-d) \p_v p_\infty(v)
  .
\end{equation*}
The link between this and the solution of the nonlinear system is that
we expect $p \approx u + p_\infty$ as long as the solution $p$ is
close to the equilibrium $p_\infty$. When there is no delay ($d=0$) we
denote by $T$ the operator on the right hand side of the previous
equation:
\begin{equation}
  \label{eq:linearized-op}
   T u := \p_v \left[(v - b N_\infty) u \right]
  + \p^2_v u  + \delta_{V_R} N_u(t)
  - b N_u(t) \p_v p_\infty(v).
\end{equation}
We will assume that the PDE $\p_t u = Tu$, with boundary condition
$u(V_F, t) = 0$ and an initial condition $u_0 \in X$ is well posed,
and we denote the associated solution by $e^{tT} u_0$. As far as we
know there are no previous results on this, and we realize that this
is a gap that needs to be filled in order to make the theory fully
rigorous. However, assuming a solution exists which satisfies the
usual Duhamel's formula, it is easy to give \emph{a priori} estimates
which could then be used to give a full proof of existence of
solutions. We will not develop this existence theory in this paper
since it would add a long technical part, but we will show two
straightforward a priori estimates. Call $u(t, \cdot) = e^{tT}
u_0$. Since $T u = L u - b N_u \p_v p_\infty$, and assuming we may
apply Duhamel's formula, we have
\begin{equation}
  \label{eq:Duhamel-linearized}
  u(\cdot,t) = e^{t L} u_0
  + \int_0^t N_u(s) e^{(t-s) L} (\p_v p_\infty) \d s.
\end{equation}
Taking the $\|\cdot\|_X$ norm and using equations \eqref{eq:gap-X} and
\eqref{eq:gap-X-regularized} from Proposition \ref{prp:gap-X} gives
\begin{multline*}
  \|u(\cdot,t)\|_X
  \leq
  \| e^{t L} u_0\|_X
  + |b| \int_0^t |N_u(s)| \| e^{(t-s) L} (\p_v p_\infty)\|_X \d s
  \\
  \lesssim
  e^{-\lambda t} \| u_0\|_X
  + |b| \| \p_v p_\infty \|_{L^2(\varphi)}
  \int_0^t \| u(\cdot,s) \|_X (t-s)^{-\frac34} e^{-\lambda(t-s)} \d s,
\end{multline*}
valid for all $t \geq 0$. Calling
$z(t) := e^{\lambda t} \|u(\cdot,t)\|_X$ we have
\begin{equation*}
  z(t)
  \lesssim
  \| u_0\|_X
    + \int_0^t z(s) (t-s)^{-\frac34} \d s.
\end{equation*}
Lemma \ref{lem:gronwall-continuous} now shows that there exists $C >
0$ and $t_0 > 0$ such that
\begin{equation*}
  \|u(\cdot,t)\|_X
  \leq C \| u_0\|_X
  \qquad \text{for all $0 \leq t \leq t_0$.}
\end{equation*}
This is an apriori estimate on the well-posedness of the linearized
equation. Similarly, we can obtain a regularization estimate: taking
again the $\|\cdot\|_X$ norm in \eqref{eq:Duhamel-linearized} and
using now Lemma \ref{lem:regularity} we have
\begin{multline*}
  \|u(\cdot,t)\|_X
  \lesssim
  t^{-\frac34} \| u_0\|_{L^2(\varphi)}
  + |b| \| \p_v p_\infty \|_{L^2(\varphi)}
  \int_0^t \| u(\cdot,s) \|_X (t-s)^{-\frac34} e^{-\lambda(t-s)} \d s
  \\
  \lesssim
  t^{-\frac34} \| u_0\|_{L^2(\varphi)}
  +
  \int_0^t \| u(\cdot,s) \|_X (t-s)^{-\frac34} \d s.
\end{multline*}
Lemma \ref{lem:gronwall-continuous} now shows that there exist $C > 0$
and $t_0 > 0$ such that
\begin{equation}
  \label{eq:linearized-regularization}
  \|u(\cdot,t)\|_X
  \leq C t^{-\frac34} \| u_0\|_{L^2(\varphi)}
  \qquad \text{for all $0 \leq t \leq t_0$.}
\end{equation}
This regularization estimate will be needed for the proof of the
next result.

\begin{thm}[Linearized stability implies nonlinear
  stability]
  \label{thm:convergence-linearized-to-nonlinear}
  Let us consider $ b\in\R $, $V_R< V_F \in \R$, a nonnegative initial
  condition $p_0\in X$ and $p$ the corresponding solution to the
  nonlinear system \eqref{eq:NNLIF}. Let $p_\infty$ be an equilibrium
  of the nonlinear system. Assume that the flow $(e^{tT})_{t \geq 0}$
  associated to the linearized operator $T$ given in
  \eqref{eq:linearized-op} has a spectral gap of size $\lambda > 0$ in
  the space $X$; that is, there exist $\lambda > 0$ and $C \geq 1$
  such that for all $u_0 \in X$ with
  $\int_{-\infty}^{V_F} u_0(v) \d v = 0$ we have
  \begin{equation}
    \label{eq:sg-hyp}
    \| e^{tT} u_0 \|_X \leq C e^{-\lambda t} \|u_0\|_X
    \qquad \text{for all $t \geq 0$.}
  \end{equation}
  Then there exists $\epsilon > 0$ (depending on $b$, $V_R$, $V_F$,
  $p_\infty$ and $\lambda$) such that if
  $\|p_0-p_\infty\|_X \leq \epsilon$ we have
  \begin{equation}
    \|p(.,t) - p_\infty(\cdot) \|_X
    \le 2 C e^{-\frac{\lambda t}{2}} \|p_0-p_\infty\|_X
    \qquad \text{for all $t\ge 0$.}
  \end{equation}
\end{thm}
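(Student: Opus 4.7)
The plan is to write $u := p - p_\infty$, derive that $u$ satisfies $\p_t u = Tu - bN_u\,\p_v u$ (the terms rearrange exactly this way because $(v-bN_p)p - (v-bN_\infty)p_\infty = (v-bN_\infty)u - bN_u p_\infty - bN_u u$, and the $-bN_u p_\infty$ piece is already absorbed into $T$). Mass is conserved: $\int u_0\,\dv = 0$ since both $p_0$ and $p_\infty$ are probability densities, and one checks that $\p_t \int u \,\dv = 0$ under $T$ (the boundary flux $-N_u$ from $\p_v^2 u$ cancels exactly with $\delta_{V_R}N_u$, and $\int \p_v p_\infty\,\dv = 0$), so $\int u(\cdot,t)\,\dv = 0$ for all $t\ge 0$. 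Then Duhamel's formula applied to $\p_t u = Tu + (\text{nonlinear})$ reads
\begin{equation*}
u(\cdot,t) = e^{tT}u_0 - b\int_0^t N_u(s)\,e^{(t-s)T}\bigl(\p_v u(\cdot,s)\bigr)\,\d s.
\end{equation*}

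Before estimating this, I would upgrade the spectral gap hypothesis \eqref{eq:sg-hyp} using the a priori regularization bound \eqref{eq:linearized-regularization} exactly as in Proposition \ref{prp:gap-X}: splitting into short and long times, one shows that for every mean-zero $v_0 \in L^2(\varphi)$,
\begin{equation*}
\|e^{tT}v_0\|_X \;\lesssim\; t^{-3/4}\,e^{-\lambda t}\,\|v_0\|_{L^2(\varphi)}\qquad\text{for all $t>0$,}
\end{equation*}
possibly at the cost of replacing $\lambda$ by a slightly smaller positive constant (still called $\lambda$). This is applicable to $v_0 = \p_v u(\cdot,s)$ because $\int \p_v u\,\dv = u(V_F,s) - \lim_{v\to-\infty}u(v,s) = 0$ and $\|\p_v u(\cdot,s)\|_{L^2(\varphi)} \le \|u(\cdot,s)\|_X$. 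Combined with $|N_u(s)| \le \|\p_v u(\cdot,s)\|_\infty \le \|u(\cdot,s)\|_X$, taking $\|\cdot\|_X$ in the Duhamel identity yields the nonlinear Volterra inequality
\begin{equation*}
f(t) \;\le\; C\,e^{-\lambda t}f(0) + C|b|\int_0^t f(s)^2\,(t-s)^{-3/4}\,e^{-\lambda(t-s)}\,\d s,\qquad f(t):=\|u(\cdot,t)\|_X.
\end{equation*}

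I would then close by a standard continuation/bootstrap. Define
\begin{equation*}
T^* := \sup\bigl\{\tau\ge 0 \,:\, f(s) \le 2Cf(0)\,e^{-\lambda s/2}\ \text{for all $s\in[0,\tau]$}\bigr\}.
\end{equation*}
On $[0,T^*]$ the Volterra inequality gives
\begin{equation*}
f(t) \;\le\; C\,e^{-\lambda t}f(0) + 4C^3|b|f(0)^2\,e^{-\lambda t}\int_0^t (t-s)^{-3/4}\,\d s \;=\; e^{-\lambda t}\bigl(C f(0) + 16C^3|b|f(0)^2\,t^{1/4}\bigr),
\end{equation*}
which is $\le 2Cf(0)\,e^{-\lambda t/2}$ iff $1 + 16C^2|b|f(0)\,t^{1/4} \le 2e^{\lambda t/2}$. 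The function $t\mapsto (2e^{\lambda t/2}-1)/t^{1/4}$ is continuous, positive on $(0,\infty)$, and blows up at both $t\to 0^+$ and $t\to\infty$, so its infimum $m>0$ is attained. Choosing $\epsilon>0$ with $16C^2|b|\epsilon \le m$, the bootstrap closes strictly whenever $f(0)\le\epsilon$, forcing $T^*=+\infty$.

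The main obstacle is Step 2, upgrading the bare spectral gap \eqref{eq:sg-hyp} to the combined regularization-and-decay estimate while tracking that mean-zero is preserved by $e^{tT}$; the conservation-of-mass calculation mentioned above is exactly what makes the iteration $e^{tT} = e^{sT}\circ e^{(t-s)T}$ respect the mean-zero hypothesis needed at each application of the gap. A secondary technical point, which we take for granted following \eqref{eq:linearized-regularization}, is that the Duhamel formula for $\p_t u = Tu + (\text{forcing})$ is rigorously valid for our classical solutions; once the existence theory for $e^{tT}$ on $X$ is in place (as sketched in the paragraph preceding the theorem), the remainder of the argument is as above.
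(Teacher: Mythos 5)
Your proof is correct and follows the same overall architecture as the paper's: identify $u := p - p_\infty$, rewrite the nonlinear equation as the linearized flow plus the quadratic remainder $-bN_u\p_v u$, invoke Duhamel, upgrade the bare spectral gap \eqref{eq:sg-hyp} to the regularization-and-decay estimate $\| e^{tT}v_0\|_X \lesssim t^{-3/4}e^{-\lambda t}\|v_0\|_{L^2(\varphi)}$ exactly as the paper does in \eqref{eq:gap-T-regularized} via \eqref{eq:linearized-regularization}, and take $X$-norms to reach the nonlinear Volterra inequality. The one genuine difference is the closing argument. The paper first defines the stopping time $T_0$ so that $\|u(\cdot,s)\|_X \le K_1 := 4C\|u_0\|_X$, uses this a priori bound to linearize the quadratic term, and then appeals to the convolution Gronwall inequality (Lemma \ref{lem:gronwall-modified}) to obtain $f(t) \le 2Cf(0)\exp(C_b^4\mu t)$ with $f = e^{\lambda t}\|u\|_X$, closing the continuation once $C_b^4\mu \le \lambda/2$. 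You instead make the decaying profile $\|u(\cdot,t)\|_X \le 2Cf(0)e^{-\lambda t/2}$ itself the bootstrap ansatz, substitute it directly into the quadratic Volterra inequality, and verify that the ansatz self-improves provided $16C^2|b|f(0)t^{1/4} \le 2e^{\lambda t/2}-1$, which you arrange by taking $f(0)$ below the infimum of $(2e^{\lambda t/2}-1)/t^{1/4}$. Your route avoids Lemma \ref{lem:gronwall-modified} entirely and lands immediately on the stated decay rate, at the small cost of an elementary calculus argument to show the infimum is positive; the paper's route is more modular since the convolution Gronwall lemma is reused elsewhere (e.g.\ in Theorem \ref{thm:convergence-b-small}). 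Both are sound. Your explicit verification that $\p_v u(\cdot,s)$ has zero integral (so the spectral gap actually applies to it), and that $e^{tT}$ preserves the mean-zero subspace when iterating short-time regularization into long-time decay, is a welcome point that the paper leaves implicit.
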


The above result is given for systems without delay. Though we believe
the analogous result holds for systems with delay, in that case there
are technical difficulties in carrying out the same argument (in
particular in stating the Duhamel's formula). We also point out that
from the proof below one can easily obtain a decay rate with
$e^{-(\lambda-\delta) t}$ for any $\delta > 0$ instead of
$e^{-(\lambda t) / 2}$, but we have chosen the latter for readability.

\begin{proof}[Proof of Theorem \ref{thm:convergence-linearized-to-nonlinear}]
  The proof goes along the lines of the one for the Theorem
  \ref{thm:convergence-b-small}, using this time our assumption on the
  linearized operator. As a preliminary step, we notice that due to
  \eqref{eq:sg-hyp}, the semigroup $e^{tT}$ satisfies
  \begin{equation}
    \label{eq:gap-T-regularized}
    \| e^{tT} u_0 \|_X \leq C t^{-\frac34} e^{-\lambda t}
    \|u_0\|_{L^2(\varphi)}
  \end{equation}
  for all $u_0 \in X$, all $t > 0$ and some $C > 0$, $\lambda >
  0$. For $0 < t \leq t_0$ this is a consequence of
  \eqref{eq:linearized-regularization}; and for $t > t_0$ we can carry
  out the same argument as in the proof of
  \eqref{eq:gap-X-regularized}.

  We call $ u(v,t):=p(v,t)-p_\infty(v) $ with
  $N_u(t) := N_p(t)-N_{p_\infty}$, and we write the nonlinear equation
  \eqref{eq:NNLIF-PDE}
  as
  \begin{multline*}
    \p_tu
    =
    \p^2_vu + \p_v(v u) + N_u(t) \delta_{V_R}
    - b N_{p_\infty} \p_vu
    -bN_u(t) \p_vp_\infty - bN_u(t) \p_vu
    \\
    =: Tu(v,t)-bN_u(t)\p_vu,
  \end{multline*}
  where $e^{tT}$ refers to the semigroup of the linearized equation
  (with no delay). By Duhamel's formula we have
  \begin{equation*}
    u(\cdot,t) = e^{t T} u_0
    - b \int_0^t N_u(s) e^{(t-s) T} (\p_v u(\cdot,s)) \d s.
  \end{equation*}
  Taking the $\|\cdot\|_X$ norm, using \eqref{eq:sg-hyp},
  \eqref{eq:gap-T-regularized} and the fact that
  $\| \p_v u \|_{L^2(\varphi)} \leq \| u \|_X$ we have
  \begin{multline*}
    \|u(.,t)\|_X
    \leq
    C e^{-\lambda t} \| u_0 \|_X
    +
    C|b|\int_0^t \| u(.,s)\|_X e^{-\lambda(t-s)}
    (t-s)^{-3/4}\|\p_v u(.,s)\|_{L^2(\varphi)} \d s.
    \\
    \leq
    C e^{-\lambda t} \| u_0 \|_X
    +
    C|b|\int_0^t e^{-\lambda(t-s)} (t-s)^{-3/4}\|u(.,s)\|_X^2 \d s.
  \end{multline*}
  Let us now take
  $K_1 := 4 C \|u_0\|_X$ and define
  \begin{equation*}
    T_0 := \sup \{t \geq 0 \mid \|u(\cdot, s)\|_X \leq K_1 \ \text{for $s \in
      [0,t]$} \}.
  \end{equation*}
  This $T_0$ must be strictly positive, and it may be $+\infty$; we will
  show later that with an appropriate choice of $b$ it must be
  $+\infty$. Defining $ C_b := C|b|K_1 $ we get
  \begin{equation}\label{eq:duhamel-norm-linearized-to-nonlinear}
    \|u(.,t)\|_X \le C e^{-\lambda t} \| u_0 \|_X
    +
    C_b\int_0^t e^{-\lambda(t-s)} (t-s)^{-3/4}\|u(.,s)\|_X \d s.
  \end{equation}
  We define now $f(t):=e^{\lambda t}\|u(.,t)\|_X$ and write the
  previous equation as
  \begin{equation*}
    f(t) \le C f(0)
    +
    C_b\int_0^t (t-s)^{-3/4}f(s) \d s.
  \end{equation*}
  Using the modified Gronwall's Lemma
  \ref{lem:gronwall-modified} with $A=C, B=C_b$ and
  $\alpha = 3/4 $ we find the following bound for $f(t)$:
  \begin{equation*}
    f(t) \le
    2 C f(0) \exp( C_b^4 \mu t ),
  \end{equation*}
  which in terms of $ \|u(.,t)\|_X $ is written as
  \begin{equation}
    \label{eq:nsp1}
    \|u(.,t)\|_X \le 2 C
    \| u_0 \|_X \exp( -(\lambda - C_b^4\mu) t ).
  \end{equation}
  As in the proof of Theorem \ref{thm:convergence-b-small}, this shows
  that $T_0 = +\infty$ whenever $C_b^4\mu \leq \lambda$, that is, when
  \begin{equation*}
    4 C^2 |b| \|u_0\|_X  \leq \lambda^{\frac14} \mu^{-\frac14}.
  \end{equation*}
  The latter condition is satisfied if $\|u_0\|_X$ is small enough. If
  we additionally take $\|u_0\|_X$ so that
  \begin{equation*}
    4 C^2 |b| \|u_0\|_X  \leq 2^{-\frac14} \lambda^{\frac14} \mu^{-\frac14}
  \end{equation*}
  then $\lambda - C_b^4\mu \leq \lambda/2$ and eq. \eqref{eq:nsp1}
  shows that
  \begin{equation*}
    \|u(.,t)\|_X \le 2 C
    \| u_0 \|_X \exp( -\frac{\lambda}{2} t )
    \qquad \text{for all $t \geq 0$.}
    \qedhere
  \end{equation*}
\end{proof}

\section{Spectral gap of the linearized equation}
\label{sec:linearized-sg}

Consider equation \eqref{eq:NNLIF} with delay $d \geq 0$; that is,
\begin{equation}
  \label{eq:nlif}
  \p_t p
  = \p_v\left[(v - bN_p(t-d)) p\right]
  + \p^2_vp + \delta_{V_R} N_p(t),
\end{equation}
where $p=p(v, t)$ is the unknown and
$N_p(t) := -\p_v p(v,t) \big\vert_{v = v_F}$. We recall that the
corresponding linear equation studied in Section \ref{sec:linear}
corresponds to the case in which $N_p(t)$ is fixed to a given constant
in the nonlinear term:
\begin{equation}
	\label{eq:linear2}
	\p_t u =
	\p_v\left[(v - bN) u \right] + \p_v^2 u
        + \delta_{V_R} N_u(t) := Lu(v,t),
\end{equation}
where it is understood that
$N_u(t) := -\p_v u(v,t) \big\vert_{v = V_F}$. The \emph{linear}
equation is inherently an equation without any delay. On the other
hand, we may also consider the \emph{linearized} version of equation
\eqref{eq:nlif}: if $p_\infty$ is a stationary state of this equation
(nonnegative, with integral equal to $1$), we may search for solutions
``close to $p_\infty$'', of the form
$p(v,t) = p_\infty(v) + \epsilon u(v,t)$; for small $\epsilon$, $u$
must approximately solve the linearization of this partial
differential equation close to $p_\infty$, given by
\begin{equation*}
	\p_t u
	=
	\p_v \left[(v - b N_\infty) u \right]
	+ \p^2_v u  + \delta_{V_R} N_u(t)
	- b N_u(t-d) \p_v p_\infty
	,
\end{equation*}
that is,
\begin{equation}
	\label{eq:linearized}
	\p_t u
	=
	L_\infty u - b N_u(t-d) \p_v p_\infty
	,
\end{equation}
where $L_\infty$ is the linear operator \eqref{eq:linear2}
corresponding to $N = N_\infty := -\p_v p_\infty(V_F)$. The linearized
equation \eqref{eq:linearized} inherits the mass-preservation property
from the nonlinear equation \eqref{eq:nlif}, and it is natural to
always assume that $\int_{-\infty}^{V_F} u(t,v) \d v = 0$. Equation
\eqref{eq:linearized} is fundamental in the study of the behavior of
the nonlinear equation \eqref{eq:nlif} close to equilibrium. The
purpose of this section is to give conditions under which solutions to
\eqref{eq:linearized} converge to $0$ as $t \to +\infty$, always
assuming that $v \mapsto u(t,v)$ has integral zero for all $t$.

Our first observation is that assuming we know the solution to the
linear equation \eqref{eq:linear2}, equation \eqref{eq:linearized}
becomes a closed equation for $N_u(t) := -\p_v u(t, V_F)$. To see
this, consider first the case with delay $d=0$. By the Duhamel's
formula, the solution to \eqref{eq:linearized} is given by
\begin{equation*}
  u(v,t)= e^{t L_\infty} u_0
  - b \int_0^t N_{u}(s) e^{(t-s) L_\infty} (\p_v  p_\infty) \d s.
\end{equation*}
Taking the derivative at $v = V_F$ we obtain
\begin{equation}
  \label{eq:8-1}
  N_u(t)
  =
  g(t)
  + \int_0^t N_u(s) h(t-s) \d s,
\end{equation}
where we define
\begin{equation}
  \label{eq:defhg}
  h(t) := - b N ( e^{t L_\infty} \p_v p_\infty),
  \qquad
  g(t) := N(  e^{t L_\infty} u_0 ).
\end{equation}
Equation \eqref{eq:8-1} is a type of integral equation sometimes known
as the \emph{renewal equation} or \emph{Volterra's integral equation of
  the second kind}. It can be solved and its asymptotic behavior as
$t \to +\infty$ can be characterized in terms of the Laplace transform
of $h$; see Section \ref{sec:volterra}.

Now, in the case of a positive delay $d > 0$, we consider an initial
condition $u_0 \in \mathcal{C}([-d,0], X)$. Duhamel's formula applied
to equation \eqref{eq:linearized} gives
\begin{equation}
  \label{eq:9}
  u_t = e^{t L_\infty} u_0
  - b \int_0^t N_{u}(s-d) e^{(t-s) L_\infty} (\p_v  p_\infty) \d s,
\end{equation}
where it is understood that $e^{t L_\infty} u_0$ means
$e^{t L_\infty}$ applied to the initial condition $u_0(0)$. Taking the
derivative at $v = V_F$ we obtain
\begin{equation*}
  N_u(t)
  =
  g(t)
  + \int_0^t N_u(s-d) h(t-s) \d s,
\end{equation*}
where we define $h$ and $g$ just as before. We now rewrite this
equation in a form closer to the convolution equation
\eqref{eq:8-1}. For $t \geq d$ we have
\begin{equation*}   
  N_u(t)
  =
  g(t)
  + \int_0^{d} N_u(s-d) h(t-s) \d s
  + \int_0^{t-d} N_u(s) h(t-s-d) \d s.
\end{equation*}
If we define $h(t) := 0$ for $t < 0$, the above equation can be
written as
\begin{equation*}   
  N_u(t)
  =
  g(t)
  + \int_0^{d} N_u(s-d) h(t-s) \d s
  + \int_0^{t} N_u(s) h(t-s-d) \d s,
\end{equation*}
and this can actually be checked to hold for all $t \geq 0$. Hence we
have
\begin{equation}
  \label{eq:10}
  N_u(t)
  =
  g_1(t)
  + \int_0^{t} N_u(s) h(t-s-d) \d s,
\end{equation}
where
\begin{equation*}
  g_1(t)
  :=
  g(t)
  + \int_0^{d} N_u(s-d) h(t-s) \d s.
\end{equation*}
Notice that the values of $N_u$ which appear in the definition of
$g_1$ are only for negative times, and are hence given by the initial
condition of the delay equation for $u$.

\subsection{Proof of Theorem \ref{thm:N-asymptotic-behavior-main}}

A study of the integral equations \eqref{eq:8-1} and \eqref{eq:10} (see
Theorem \ref{thm:asymptotic} in Section \ref{sec:volterra}) leads to
Theorem \ref{thm:N-asymptotic-behavior-main}, which we prove now:

\begin{proof}[Proof of Theorem \ref{thm:N-asymptotic-behavior-main}]
  First, assume that the zeros of $\Phi_d$ (if any) are all on the
  real negative half-plane. Since $\Phi_d$ has a finite number of
  zeros on any strip with bounded real part, we know then that there
  exists $\lambda_0 > 0$ such that $F$, the function which appears in
  Theorem \ref{thm:asymptotic}, does not have any poles with real part
  $\geq -\lambda_0$. Let $u$ be any solution to the linearized
  equation \eqref{eq:linearized} with delay $d \geq 0$, with initial
  data $u_0 \in X$ (if $d=0$) or $u_0 \in \mathcal{C}([-d,0], X)$ (if
  $d > 0$). In both cases it is enough to show
  \eqref{eq:linearized-stability-def} in the definition of linearized
  stability when $u_0(\cdot, 0)$ is a regular function (say
  $\mathcal{C}^2$), since the general case can be obtained by
  density. For the same reason, we may assume $N[u_0(\cdot, t)]$ is a
  $\mathcal{C}^1$ function on $[-d, 0]$.

  The firing rate $N_u$ associated to the solution $u$
  satisfies the renewal-type equation \eqref{eq:10} (which is
  \eqref{eq:8-1} in the particular case $d=0$). The functions $g_1$ and
  $h(\cdot-d)$ satisfy the hypotheses of Theorem \ref{thm:asymptotic}
  (with $g_1$ playing the role of $g$ in its statement):
  \begin{enumerate}
  \item The function $g$ satisfies
    $|g(t)| = |N( e^{t L_\infty} u_0 )| \leq \| e^{t L_\infty} u_0\|_X
    \leq C e^{-\lambda t} \| u_0 \|_X$ due to Theorem
    \ref{prp:gap-X}. Hence the bound \eqref{eq:c2} is satisfied. Since
    we may assume $u_0(0)$ is regular enough, it is known from
    \cite{carrillo2013classical} that $g$ is $\mathcal{C}^1$ on
    $[0,+\infty)$, so it is continuous and of bounded variation on
    compact intervals of $[0,+\infty)$.
    
  \item Since we have assumed the function $N_u$ to be $\mathcal{C}^1$
    on $[-d,0]$, the integral term in the definition of $g_1$ is
    easily seen to be of bounded variation on compact sets, and
    satisfies \eqref{eq:c2}.
    
  \item Due to \eqref{eq:gap-X-regularized} in Proposition
    \ref{prp:gap-X}, the function $h$ can be bounded by
    $|h(t)| \leq C t^{-\frac34} e^{-\lambda t}$ for some
    $C, \lambda > 0$, so $h$ satisfies \eqref{eq:c1}.
  \end{enumerate}
  We note that $\Phi_d$ is precisely the denominator of the function
  $F$ which appears in Theorem \ref{thm:asymptotic}, so we have proved
  that $F$ does not have any poles with real part $\geq
  \max\{-\lambda, -\lambda_0\}$. Then the theorem shows
  that, for some $C \geq 1$ and $\lambda > 0$,
  \begin{equation*}
    |N_u(t)| \leq C e^{-\frac{\lambda}{2} t} \|u_0\|_X
    \qquad \text{for $t \geq 0$.}
  \end{equation*}
  (Or any positive constant strictly smaller than $\lambda$ instead of
  $\lambda/2$.) This shows the decay of $N_u$ as $t \to +\infty$. The
  decay of $\|u(\cdot, t)\|_X$ is obtained by taking the $\|\cdot\|_X$
  norm in eq. \eqref{eq:9} and using the decay of $N_u$. This gives
  the spectral gap result, since the constants $C$ and $\lambda$ can
  be taken to depend only on $h$.

  Reciprocally, assume that there is a zero $\xi_0$ of $\Phi_d$ with
  $\Re(\xi_0) \geq 0$. One can choose an initial condition $u_0$ on
  $[-d,0]$ such that $\xi_0$ is a pole of the function $F$ in Theorem
  \ref{thm:asymptotic}. For the solution associated to this initial
  condition, Theorem \ref{thm:asymptotic} shows that $N_u(t)$ does not
  decay exponentially as $t \to +\infty$, so the equation does not
  have a spectral gap in this case.
\end{proof}

\subsection{Proof of Theorem \ref{thm:stability-main}}

\begin{proof}[Proof of Theorem \ref{thm:stability-main}]
  \textbf{Proof of point 1.} Due to Lemma \ref{lem:ddtN}, the
  inequality in point 1 means that
  \begin{equation*}
    \hat{h}(0) = \int_0^\infty h(t) \d t > 1.
  \end{equation*}
  Hence the function $\Phi_d(\xi)$ satisfies $\Phi_d(0) = 1 -
  \hat{h}(0) < 0$, it is real and analytic for $\xi \in [0,+\infty)$, and satisfies
  $\lim_{\xi \to +\infty, \ \xi \in \R} \Phi_d(\xi) = 1$. Hence
  $\Phi_d$ must have a zero at some $\xi_0 \in \R$ with positive real
  part, and by Theorem \ref{thm:N-asymptotic-behavior-main} the linearized
  equation \eqref{eq:linearized} must have a solution whose associated
  $N$ diverges to $+\infty$. Hence $p_\infty$ is linearly unstable.
  
  \medskip
  \noindent
  \textbf{Proof of point 2.} In the second case it holds that, for
  purely imaginary 
  $\xi = ik$ (with $k \in \R$) we have
  \begin{equation*}
    |\hat{h}(\xi) \exp(-ikd)|
    = \left|
      \int_0^\infty h(t) e^{-i k t} \d t
    \right|
    \leq \int_0^\infty |h(t)| \d t < 1.
  \end{equation*}
  Hence, the function $\hat{h}$ is analytic on the real positive
  half-plane
  $\C_{\Re \geq 0} := \{ \xi \in \C \mid \Re(\xi) \geq 0 \}$, has
  modulus strictly less than one on the imaginary axis, and converges
  to $0$ as $|\xi| \to +\infty$ on $\C_{\Re \geq 0}$. Hence $\hat{h}$
  never has modulus $1$ on $\C_{\Re \geq 0}$, and as a consequence the
  function $\Phi_d$ does not have any zeros on $\C_{\Re \geq 0}$. By
  Theorem \ref{thm:N-asymptotic-behavior-main}, $p_\infty$ is linearly
  stable.

  \medskip
  \noindent
  \textbf{Proof of point 3.}  In order to use Theorem
  \ref{thm:N-asymptotic-behavior-main} to prove that $p_\infty$ is an
  unstable equilibrium for large $d$, we will find a root
  $\xi \in \C_{\Re \geq 0}$ of the function $\Phi_d$ (that is, a root
  with $\Re(\xi) > 0$). Equivalently, we need to find
  $\xi \in \C_{\Re \geq 0}$ with $\hat{h}(\xi) e^{-\xi d} = 1$. We
  will first show (in steps 1-4) that there exists $\xi \in \C$ with
  $\hat{h}(\xi) e^{-\xi d} = 1$ and $\Re(\xi) \geq 0$, and then we
  will slightly modify the argument (in step 5) to find a root $\xi$
  with strictly positive real part.
  
  \medskip
  \noindent
  \textbf{Step 1: Link to the winding number of a certain curve.}  Due
  to Lemma \ref{lem:ddtN}, the inequality
  \eqref{eq:conditional-instability-condition-intro} means that
  \begin{equation*}
    \hat{h}(0) = \int_0^\infty h(t) \d t < -1.
  \end{equation*}
  For any $y \geq 0$, consider the curve
  $\alpha_y \: [0, +\infty) \to \C$ defined by
  \begin{equation*}
    \alpha_y(k) := \hat{h}(y + i k) \exp(-(y +i k) d)
    \qquad \text{for $k \in \R$.}
  \end{equation*}
  This curve satisfies $\alpha_y(0) = \hat{h}(y) e^{-y d}$ and
  $\lim_{k \to +\infty} \alpha_y(k) = 0$. We may complete this curve to
  a closed, continuous curve $\beta_y \: [0,2] \to \C$ by joining
  these endpoints, for example by defining
  \begin{equation*}
    \beta_y(x) :=
    \begin{cases}
      \alpha_y(\tan(\frac{\pi x}{2})) \qquad & \text{if $x \in [0,1)$,}
      \\
      (x-1) \alpha_y(0) \qquad & \text{if $x \in [1,2]$.}
    \end{cases}
  \end{equation*}
  Assume the opposite of what we want to prove; that is, assume that
  \begin{equation}
    \label{eq:contradiction}
    \hat{h}(\xi) e^{-\xi d} \neq 1
    \qquad
    \text{for all $\xi \in \C_{\Re \geq 0}$.}  
  \end{equation}
  Equivalently, the curve $\alpha_y$ never touches the point $1$, for
  any $y \geq 0$. As a further consequence, notice that
  $\alpha_y(0) = \hat{h}(y) e^{-y d}$ must always be strictly less
  than $1$ for $y \geq 0$, since it is real and continuous, it is less
  than $-1$ for $y=0$, and it never touches $1$. Hence, the curves
  $\beta_y$ never touch the point $1$.

  Since the curves $\beta_y$ give a homotopy from the curve $\beta_0$
  to the single point $0$, all curves $\beta_y$ must have winding
  number $0$ about the point $1$. Our proof by contradiction is
  complete if we prove that $\beta_0$ has nonzero winding number about
  $1$ for $d$ large.
  
  \medskip
  \noindent
  \textbf{Step 2: For large $d$, all crossings of $\beta_0$ with
    $(1,+\infty)$ are negatively oriented.} That is, we will show that
  any time $\beta_0$ crosses the line $R := (1,\infty)$ in the complex
  plane, it does so by traversing it from above. Since
  $\alpha_0(0) < -1$, it is clear that $\beta_y(x)$ never touches $R$
  for $x \in [1,2]$, so we only need to study the way in which the
  curve $\alpha_0$ crosses $R$. We have, for $k > 0$,
  \begin{equation*}
    \ddk \alpha_0(k) = i \hat{h}'(ik) e^{-i k d} - i d \alpha_0(k).
  \end{equation*}
  Notice that for all $k \geq 0$,
  \begin{equation*}
    |\hat{h}'(ik)|
    =
    \left| \int_0^\infty i t\, h(t) e^{-ik t} \d t
    \right|
    \leq
    \int_0^\infty t\, |h(t)| \d t =: M.
  \end{equation*}
  Hence, if $\alpha_0(k) > 1$ at a certain point $k > 0$ then
  \begin{equation*}
    \Im\left( \ddk \alpha_0(k) \right)
    \leq
    |\hat{h}'(ik)| - d \alpha_0(k)
    \leq M - d \alpha_0(k),
  \end{equation*}
  where $\Im(\cdot)$ denotes the imaginary part. If $d \geq M$ then
  this is a negative number, and hence the crossing is negatively
  oriented. 

  \medskip
  \noindent
  \textbf{Step 3: For large $d$, the curve $\beta_0$ crosses the line
    $(1,+\infty)$ at least once.} This is easy to see, since the curve
  $\gamma(k) := \alpha_0(k/d)$ converges to the curve
  $\alpha_0(0) e^{-ik}$, uniformly in compact sets. Since
  $\alpha_0(0) < -1$ by assumption, the curve $\gamma$ must cross the
  interval $(1, +\infty)$ for large enough $d$, hence the same is true
  of $\alpha_0$.
 
  \medskip
  \noindent
  \textbf{Step 4: The winding number of $\beta_0$ is nonzero for $d$
    large.}  The crossing number can be equivalently defined as the
  total number of crossings of the curve $\beta_0$ with $(1,+\infty)$,
  counting each one with its orientation. Since $\beta_0$ crosses
  $(1,+\infty)$ at least once, and all crossings are negatively
  oriented, this means that its winding number around $1$ must be
  negative. This completes the contradiction, and shows that for $d$
  large, $\Phi_d$ must have at least one root on the positive complex
  half plane $\C_{\Re \geq 0}$.

  \medskip
  \noindent
  \textbf{Step 5: End of the proof.} It only remains to show that (for
  $d$ large) there must actually be a root of $\Phi_d$ with strictly
  positive real part. But all the previous steps can be carried out
  with the slightly perturbed curve $\alpha_y$, instead of $\alpha_0$,
  for small $y$, hence obtaining the conclusion.  
\end{proof}

In Figure \ref{fig:integral-h} we see a numerical description of the
different cases in Theorem \ref{thm:stability-main},
depending on the connectivity parameter $ b $. This gives a global
description of the long-time behavior of the solutions to the
linearized equation \eqref{eq:linearized}. However, in cases where the
value of the delay is key to the stability of the equilibria, no
quantitative results are shown with respect to the delay. For this
purpose we have made Figure \ref{fig:map}, where the stability or
instability of equilibria to the linearized equation is shown in a
two-dimension map $(b,d)$.

We end the section by proving the relationship between $\hat{h}(0)$
and $\frac{d}{dN}\left(\frac{1}{I(N)}\right)_{|N=N_\infty}$ used in
proof of Theorem \ref{thm:stability-main}.
\begin{lem}
  \label{lem:ddtN}
  Take $b \in \R$, and let
  $p_\infty \: (-\infty, V_F] \to [0,+\infty)$ be a nonnegative
  equilibrium with unit mass of the nonlinear integrate-and-fire
  equation \eqref{eq:nlif}. The following
  equality holds:
  \begin{equation}
    \hat{h}(0)=bN(L^{-1}_\infty(\partial_v p_\infty))=
    \frac{d}{dN}\left(\frac{1}{I(N)}\right)_{|N=N_\infty}.
  \end{equation}
  (We recall that $I \: [0, \infty) \to \R^+$ was defined in
  \eqref{eq:I} and $h(t) = -b N_q(t)$ was defined in
  \eqref{eq:defhg}.)
\end{lem}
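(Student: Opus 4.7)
The plan is to prove the two equalities separately. The first identifies $\hat{h}(0)$ as $bN(\cdot)$ evaluated at a particular solution of $L_\infty w = \p_v p_\infty$; the second evaluates this quantity explicitly by differentiating a one-parameter family of stationary profiles of the linear equations $L_N$ with respect to $N$.

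For the first equality, recall from \eqref{eq:defhg} that $h(t) = -bN(e^{tL_\infty}\p_v p_\infty)$. Since $p_\infty(V_F)=0$ and $p_\infty$ decays at $-\infty$, the function $\p_v p_\infty$ has zero integral over $(-\infty,V_F]$. The regularized spectral gap \eqref{eq:gap-X-regularized} then ensures $\|e^{tL_\infty}\p_v p_\infty\|_X \lesssim t^{-3/4}e^{-\lambda t}\|\p_v p_\infty\|_{L^2(\varphi)}$, so the integral $\int_0^\infty e^{tL_\infty}\p_v p_\infty\,\d t$ converges absolutely in $X$. Since $u \mapsto N(u)$ is continuous on $X$, I can interchange $N$ with the time integral, and use the semigroup identity $\int_0^\infty e^{tL_\infty}u\,\d t = -L_\infty^{-1}u$ (which holds on the mean-zero subspace thanks to the spectral gap) to get
\[
\hat{h}(0) = -bN\!\left(\int_0^\infty e^{tL_\infty}\p_v p_\infty\,\d t\right) = bN(L_\infty^{-1}\p_v p_\infty),
\]
where $L_\infty^{-1}$ denotes the pseudoinverse of $L_\infty$ sending the mean-zero subspace to itself.

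For the second equality, I introduce the one-parameter family
\[
P_N(v) := N e^{-(v-bN)^2/2}\int_{\max(v,V_R)}^{V_F} e^{(w-bN)^2/2}\,\d w, \qquad N\geq 0.
\]
A direct computation from this explicit formula gives $P_N(V_F)=0$, $-\p_v P_N(V_F)=N$, $\int_{-\infty}^{V_F}P_N\,\d v = N I(N)$, and $L_N P_N=0$; when $N=N_\infty$, the relation $N_\infty I(N_\infty)=1$ yields $P_{N_\infty}=p_\infty$. The only $N$-dependence of $L_N$ enters through the coefficient $-bN$ in the drift, so $\p_N L_N \cdot u = -b\p_v u$. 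Differentiating the identity $L_N P_N=0$ in $N$ at $N=N_\infty$ gives
\[
L_\infty g = b\,\p_v p_\infty, \qquad g := \p_N P_N\big|_{N=N_\infty},
\]
while differentiating the normalizations yields $N(g) = -\p_v g(V_F) = 1$ and $\int_{-\infty}^{V_F}g\,\d v = \tfrac{\d}{\d N}(NI(N))\big|_{N=N_\infty} = I(N_\infty) + N_\infty I'(N_\infty)$.

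To combine, the mean-zero pseudoinverse is
\[
L_\infty^{-1}(\p_v p_\infty) = \frac{1}{b}\Bigl(g - \Bigl(\textstyle\int g\Bigr) p_\infty\Bigr),
\]
obtained from $L_\infty(g/b)=\p_v p_\infty$ by subtracting the appropriate multiple of the kernel element $p_\infty$ (which has integral $1$). Applying $bN(\cdot)$ and using $N(p_\infty) = N_\infty$, together with $N_\infty I(N_\infty)=1$, yields
\[
bN(L_\infty^{-1}\p_v p_\infty) = N(g) - N_\infty\!\int g = 1 - N_\infty\bigl(I(N_\infty)+N_\infty I'(N_\infty)\bigr) = -N_\infty^2 I'(N_\infty),
\]
which coincides with $\tfrac{\d}{\d N}(1/I(N))\big|_{N=N_\infty} = -I'(N_\infty)/I(N_\infty)^2 = -N_\infty^2 I'(N_\infty)$, completing the proof. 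The main subtlety is the careful justification of the pseudoinverse identity and the interchange of $N$ with the time integral, both of which follow from the spectral gap for $L_\infty$ on mean-zero functions established in Proposition \ref{prp:gap-X}.
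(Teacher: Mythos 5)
Your proof is correct, and for the second equality it takes a genuinely different route from the paper. Both proofs of the first equality are essentially identical: they establish absolute convergence of $\int_0^\infty e^{tL_\infty}\p_v p_\infty\,\d t$ (you via the regularized spectral gap \eqref{eq:gap-X-regularized}, the paper via the well-posedness bound plus a general semigroup theorem), exchange $N$ with the time integral by continuity on $X$, and invoke the identity $L_\infty^{-1}u = -\int_0^\infty e^{tL_\infty}u\,\d t$ on the mean-zero subspace. For the second equality the paper writes out the ODE $L_\infty\phi = \p_v p_\infty$, integrates it once, solves for $\phi$ explicitly, imposes $\int\phi = 0$ to determine $N_\phi$, and then matches the resulting double integral against a direct computation of $I'(N_\infty)$. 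You instead introduce the family $P_N$ of kernel elements of $L_N$ normalized by $N(P_N)=N$, differentiate the identity $L_N P_N = 0$ in $N$ at $N_\infty$ (using $\p_N L_N = -b\p_v$), and read off $N(g)=1$ and $\int g = \tfrac{\d}{\d N}(NI(N))\big|_{N_\infty}$ from the explicit normalizations; the pseudoinverse then follows by subtracting the appropriate multiple of $p_\infty$. Your route buys a cleaner, more conceptual argument: it avoids both the explicit ODE integration and the separate computation of $I'(N)$, since the derivative of $I$ appears automatically through $\int P_N = N I(N)$. The paper's route is more elementary and does not require differentiating the family $P_N$ in $N$ or invoking $\p_N L_N$; it is entirely self-contained at the level of a second-order ODE computation. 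Both are valid, and the final identity $1 - N_\infty I(N_\infty) - N_\infty^2 I'(N_\infty) = -N_\infty^2 I'(N_\infty) = -I'(N_\infty)/I(N_\infty)^2$ closes the argument correctly using $N_\infty I(N_\infty)=1$.
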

\begin{proof}
  \noindent
  \textbf{Proof of the first equality.} Due to our well-posedness
  inequalities in Section \ref{sec:linear} we have
  \begin{equation*}
    \| e^{t L_\infty} u_0 \|_X \lesssim C e^{\mu t} \|u_0\|_X
  \end{equation*}
  for some $C, \mu > 0$, where $e^{t L_\infty} u_0$ denotes the
  classical solution to the linear problem with a sufficiently regular
  initial condition $u_0$. By density we may then extend the map
  $e^{t L_\infty}$ to all of $X$, defining a strongly continuous
  semigroup in this space. Due to mass conservation, this semigroup
  may be restricted to the space $X_0$ of functions in $X$ with
  integral $0$, which we do in the following. By a general result on
  semigroups \cite[Theorem 1.10]{Engel1999} we have
  \begin{equation*}
    L_\infty^{-1} (u_0) = - \int_0^\infty e^{t L_\infty} u_0 \d t
  \end{equation*}
  for any $u_0 \in X_0$. Applying the operator $N$ (which is
  continuous in $X$) to both sides,
  \begin{equation*}
    N \left[
      L_\infty^{-1} (u_0)
    \right]
    = - \int_0^\infty N\left[ e^{t L_\infty} u_0  \right] \d t
  \end{equation*}
  for all $u_0 \in X_0$. Approximating $\p_v p_\infty$ by functions in
  $X_0$ and multiplying by $b$ we obtain
  \begin{equation*}
    b N \left[
      L_\infty^{-1} ( \p_v p_\infty )
    \right]
    = - b \int_0^\infty N\left[ e^{t L_\infty} \p_v p_\infty  \right] \d t
    = \int_0^\infty h(t) \d t = \hat{h}(0).
\end{equation*}

  \medskip
  \noindent
  \textbf{Proof of the second equality.}
  We denote $\phi \equiv L^{-1}_\infty(\partial_v p_\infty)$, so
  $\phi\: (-\infty, V_F] \to [0,+\infty)$ is a function which satisfies
  $$
  \partial_v p_\infty=L_\infty(\phi),
  \ \phi(V_F)=0, \ N_\phi:=-\phi'(V_F),
  \ \mbox{and} \int_{-\infty}^{V_F} \phi(v) dv=0,
  $$
  or equivalently, $\phi(V_F)=0$, $N_\phi:=-\phi'(V_F)$,
  $\int_{-\infty}^{V_F} \phi(v) dv=0$ and
  $$
  \p_v p_\infty(v)=\phi''(v)+(v\phi)'(v)-bN_\infty\phi'(v)+\delta_{V_R}.
  $$
  Therefore,
  $$
  \left( \phi'+(v-bN_\infty)\phi+H(v-V_R)N_\phi-p_\infty
  \right)'(v)=0
  $$
  and  using the boundary condition $\phi(V_F)=0$, we obtain
  $$
  \phi'(v)=-(v-bN_\infty)\phi(v)-H(v-V_R)N_\phi+p_\infty(v),
  $$
  which can be solved and we get, in terms of $N_\phi$,
  $$
  \phi(v)=\frac{N_\phi}{N_\infty}p_\infty-N_\infty e^{-\frac{(v-bN_\infty)^2}{2}}
  \int_v^{V_F}\int_{w}^{V_F}e^{\frac{(y-bN_\infty)^2}{2}}H(y-V_R) \d y \d w.
  $$
  To determine $N_\phi$ we integrate in $(-\infty, V_F)$ and we impose
  that the integral is zero
  \begin{eqnarray}
    N_\phi & = & N_\infty^2\int_{-\infty}^{V_F}
                    e^{-\frac{(v-bN_\infty)^2}{2}}
                    \int_v^{V_F}\int_{w}^{V_F}e^{\frac{(y-bN_\infty)^2}{2}}H(y-V_R)\d y \d w \dv
                    \nonumber
    \\ 
              & = &
                    N_\infty^2\int_{-\infty}^{V_F}
                    e^{-\frac{(v-bN_\infty)^2}{2}}
                    \int_v^{V_F}
                    e^{\frac{(y-bN_\infty)^2}{2}}H(y-V_R)
                    \int_{v}^{y}\d w\d y \dv
                    \nonumber
    \\
              & = &
                    N_\infty^2\int_{-\infty}^{V_F}
                    e^{-\frac{(v-bN_\infty)^2}{2}}
                    \int_v^{V_F}
                    e^{\frac{(y-bN_\infty)^2}{2}}H(y-V_R) (y-v)\d y \dv.
                    \nonumber
  \end{eqnarray}
  So we have proved
  \begin{equation}
    N(L^{-1}_\infty(\partial_v p_\infty))=N_\phi =N_\infty^2\int_{-\infty}^{V_F}
    e^{-\frac{(v-bN_\infty)^2}{2}}
    \int_v^{V_F}
    e^{\frac{(y-bN_\infty)^2}{2}}H(y-V_R) (y-v)\d y \dv.
    \label{Nvarphi}
  \end{equation}
  To conclude the proof we calculate
  $ \frac{d}{dN}\left(\frac{1}{I(N)}\right)_{|N=N_\infty}$. To do that we recall
  the definition of $I$ (see \eqref{eq:I})
  \begin{equation}
    I(N)= \int_{-\infty}^{V_F}
    e^{-\frac{\left(v-bN\right)^2}{2}}
    \int_v^{V_F}e^{\frac{\left(y-bN\right)^2}{2}} H(y-V_R)\d y  \dv.
    \label{IN}
  \end{equation}
  and write $g(N):=\frac{1}{I(N)}$, and we want to prove that
  \begin{equation}
    g'(N_\infty)=b N_\phi.
    \label{g'}
  \end{equation}
  We know that $N_\infty I(N_\infty)=1$, because
  $N_\infty$ is the firing rate of the equilibrium
  (see \eqref{eq:stationary-state}), thus
  \begin{equation}
    g'(N_\infty)=\frac{-I'(N_\infty)}{I(N_\infty)^2}=
    -I'(N_\infty)N_\infty^2.
    \label{g'bis}
  \end{equation}
  We differentiate $I(N)$
  \begin{eqnarray}
    I'(N)  & = & 
                 b\int_{-\infty}^{V_F}
                 (v-bN)	e^{-\frac{\left(v-bN\right)^2}{2}}
                 \int_v^{V_F}e^{\frac{\left(y-bN\right)^2}{2}} H(y-V_R) \d y  \dv
                 \nonumber
    \\
           & &- b \int_{-\infty}^{V_F}
               e^{-\frac{\left(v-bN\right)^2}{2}}
               \int_v^{V_F}(y-bN)e^{\frac{\left(y-bN\right)^2}{2}} H(y-V_R) \d y  \dv      
               \nonumber
    \\
           &  =  & 
                   b
                   \int_{-\infty}^{V_F}
                   v\, e^{-\frac{\left(v-bN\right)^2}{2}}
                   \int_v^{V_F}e^{\frac{\left(y-bN\right)^2}{2}} H(y-V_R) \d y  \dv
                   \nonumber
    \\
           & & - b \int_{-\infty}^{V_F}
               e^{-\frac{\left(v-bN\right)^2}{2}}
               \int_v^{V_F}y\,e^{\frac{\left(y-bN\right)^2}{2}} H(y-V_R) \d y  \dv      
               \nonumber
    \\
           &  =  & 
                   b
                   \int_{-\infty}^{V_F}
                   e^{-\frac{\left(v-bN\right)^2}{2}}
                   \int_v^{V_F}e^{\frac{\left(y-bN\right)^2}{2}} H(y-V_R)(v-y) \d y  \dv,
                   \nonumber
  \end{eqnarray}
  and evaluating in $N_\infty$  and using \eqref{g'bis} we obtain \eqref{g'},
  which concludes the proof.  
\end{proof}

\section*{Acknowledgements}

\thanks{\em The authors acknowledge support from grant
  PID2020-117846GB-I00, the research network RED2022-134784-T, and the
  María de Maeztu grant CEX2020-001105-M from the Spanish government.}

\bibliographystyle{acm}
\bibliography{neurons}

\end{document}